\providecommand{\U}[1]{\protect\rule{.1in}{.1in}}
\providecommand{\U}[1]{\protect\rule{.1in}{.1in}}
\newtheorem{theorem}{Theorem}[section]
\newtheorem{proposition}{Proposition}[section]
\newtheorem{lemma}{Lemma}[section]
\newtheorem{remark}{Remark}[section]
\renewcommand{\@biblabel}[1]{}
\begin{document}

\begin{center}
{\Large \textbf{Kernel estimation for the tail index of a right-censored
Pareto-type distribution}}\medskip\medskip

{\large Abdelhakim Necir}$^{\ast},$ {\large Louiza Soltane}\medskip\\[0pt]

{\small \textit{Department of Mathematics, Mohamed Khider University, Biskra,
Algeria}}\medskip
\end{center}

\noindent\textbf{Abstract}\medskip

\noindent We introduce a kernel estimator, to the tail index of a
right-censored Pareto-type distribution, that generalizes Worms's one
\citep{WW2014} in terms of weight coefficients. Under some regularity
conditions, the asymptotic normality of the proposed estimator is established.
In the framework of the second-order condition, we derive an asymptotically
bias-reduced version to the new estimator. Through a simulation study, we
conclude that one of the main features of the proposed kernel estimator is its
smoothness contrary to Worms's one, which behaves, rather erratically, as a
function of the number of largest extreme values. As expected, the bias
significantly decreases compared to that of the non-smoothed estimator with
however a slight increase in the mean squared error.\medskip\medskip

\noindent\textbf{Keywords:} asymptotic distributions; heavy-tailed estimation;
kernel estimation; right-censored data. \medskip

\noindent\textbf{AMS 2010 Subject Classification:} Primary 62G32; 62G30;
secondary 60G70; 60F17.

\vfill

\vfill

\noindent{\small $^{\text{*}}$Corresponding author:
\texttt{ah.necir@univ-biskra.dz} \newline\noindent\textit{E-mail
address:}\newline l.soltane@univ-biskra.dz (L.~Soltane)\newline}

\section{\textbf{Introduction\label{sec1}}}

\subsection{\textbf{A review of the tail index estimation}}

\noindent Let $X_{1},X_{2},...,X_{n}$ be independent and identically
distributed (iid) of non-negative random variables (rv's) as $n$ copies of a
rv $X,$ defined over some probability space $\left(  \Omega,\mathcal{A}%
,\mathbf{P}\right)  ,$ with cumulative distribution function (cdf) $F.\ $We
assume that the distribution tail $\overline{F}:=1-F$ is regularly varying at
infinity,\ with index $\left(  -1/\gamma_{1}\right)  ,$ notation:
$\overline{F}\in\mathcal{RV}_{\left(  -1/\gamma_{1}\right)  },$ that is%
\begin{equation}
\lim_{t\rightarrow\infty}\frac{\overline{F}\left(  tx\right)  }{\overline
{F}\left(  t\right)  }=x^{-1/\gamma_{1}},\text{ for any }x>0,
\label{first-condition}%
\end{equation}
where $\gamma_{1}>0$ is called the shape parameter or the tail index or the
extreme value index (EVI). It plays a very crucial role in the analysis of
extremes as it governs the thickness of the distribution right-tail. The most
popular estimator of $\gamma_{1}$ is Hill's estimator \cite{Hill75} defined by%
\begin{equation}
\widehat{\gamma}_{1,k}^{\left(  H\right)  }:=\frac{1}{k}%
{\displaystyle\sum\limits_{i=1}^{k}}
\log\frac{X_{n-i+1:n}}{X_{n-k:n}}=%
{\displaystyle\sum\limits_{i=1}^{k}}
\frac{i}{k}\log\frac{X_{n-i+1:n}}{X_{n-i:n}}, \label{Hill}%
\end{equation}
where $X_{1:n}\leq...\leq X_{n:n}$ denote the order statistics pertaining to
the sample $\left(  X_{1},...,X_{n}\right)  $ and $k=k_{n}$ is an integer
sequence satisfying $1<k<n,$ $k\rightarrow\infty$ and $k/n\rightarrow0$ as
$n\rightarrow\infty.$ The discrete character and non-stability of Hill's
estimator present major drawbacks. Indeed, adding a single large-order
statistic in the calculation of the estimator, that is, increasing $k$ by $1,$
may deviate from the true value of the estimate substantially. Thus, the
plotting of this estimator as a function of the upper order statistics often
gives a zig-zag figure (see Figure $\ref{Figure1}).$%

\begin{figure}
[ptb]
\begin{center}
\includegraphics[
trim=0.000000in 0.000000in -0.367615in 0.000000in,
height=3.0113in,
width=5.521in
]%
{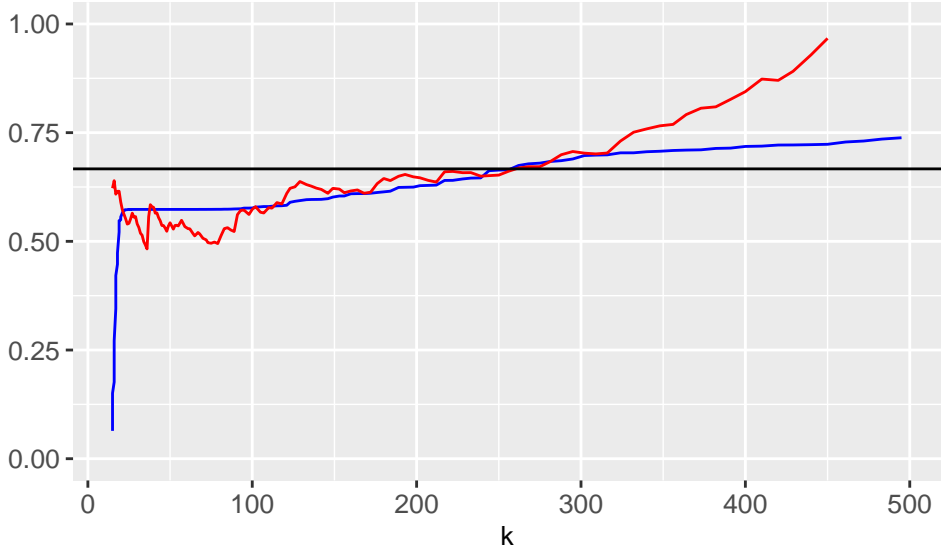}%
\caption{Ploting both Hill's (red line) and CDM's (blue line) tail index
estimators, as function of \noindent$k$ upper order statistics, for a
Pareto-type distribution.}%
\label{Figure1}%
\end{center}
\end{figure}

\noindent To overcome this issue, \cite{CDM85} introduced more general weighs
instead of the natural one $i/k$ that appears in the second formula of
$\widehat{\gamma}_{1,k}^{\left(  H\right)  },$ to define the following kernel
estimator
\[
\widehat{\gamma}_{1,k}^{\left(  CDM\right)  }\left(  K\right)  :=\sum
_{i=1}^{k}\frac{i}{k+1}K\left(  \frac{i}{k+1}\right)  \log\frac{X_{n-i+1:n}%
}{X_{n-i:n}},
\]
where $K$ is a kernel function satisfying the following assumptions:

\begin{itemize}
\item $\left[  A1\right]  $ is non increasing and right-continuous on
$\mathbb{R}.$

\item $\left[  A2\right]  $ $K\left(  s\right)  =0$ for $s\notin\left(
0,1\right]  $ and $K\left(  s\right)  \geq0$ for $s\in\left(  0,1\right]  .$

\item $\left[  A3\right]  $ $\int_{\mathbb{R}}K\left(  s\right)  ds=1.$

\item $\left[  A4\right]  $ $K$ and its first and second Lebesgue derivatives
$K^{\prime}$ and $K^{\prime\prime}$ are bounded.\medskip
\end{itemize}

\noindent The commonly used kernel functions are: the indicator kernel
$K_{1}:=\mathbf{1}\left\{  \left[  0,1\right)  \right\}  ,$ the biweight,
triweight and quadweight kernels respectively defined on $0\leq s<1$ by%
\begin{equation}
K_{2}\left(  s\right)  :=\frac{15}{8}\left(  1-s^{2}\right)  ^{2},\text{
}K_{3}\left(  s\right)  :=\frac{35}{16}\left(  1-s^{2}\right)  ^{3},\text{
}K_{4}\left(  s\right)  :=\frac{315}{128}\left(  1-s^{2}\right)  ^{4},
\label{Kfun}%
\end{equation}
and zero elsewhere, where $\mathbf{1}\left\{  A\right\}  $ stands for the
indicator function of a set $A.$ Note that the indicator kernel $K_{1}$
corresponds to the weigh coefficients of a closely related tail index
estimator to Hill's one $\widehat{\gamma}_{1,k}^{\left(  H\right)  }.$ The
nice properties of the kernel estimator $\widehat{\gamma}_{1,k}^{\left(
CDM\right)  }\left(  K\right)  $ are the smoothness and the stability,
contrary to Hill's one which rather exhibits fluctuations along the range of
upper extreme values. Thanks to these features, the exact choice of $k$ to be
used in the kernel estimator becomes not as crucial as that in Hill's one
\citep[see,
e.g.,][]{GLW2003}. For an overview of the kernel estimates of the tail index
for complete data, one refers to \cite{HLM2006}, \cite{CM2010}, \cite{GBW10}
and \cite{Caeiro19} and references therein. Motivated by the qualities of this
estimation method, recently \cite{BchMN2016} proposed a kernel estimator of
the tail index for randomly truncated data and established its asymptotic
normality. To the best of our knowledge, when the data are randomly censored,
this estimation approach is not yet addressed in the extreme value literature.
In the following section we present a review of the existing tail index
estimators and then propose kernel estimators to $\gamma_{1}$ for censored data.

\section{\textbf{Review of tail index estimation for censored data\label{sec2}%
}}

\noindent In the analysis of lifetime, reliability or insurance data, the
observations are usually randomly censored. In other words, in many real
situations the variable of interest $X$ is not always available. An
appropriate way to model this matter, is to introduce a non-negative rv $Y,$
called censoring rv, independent of $X$ and then to consider the rv
$Z:=\min\left(  X,Y\right)  $ and the indicator variable $\delta
:=\mathbf{1}\left\{  X\leq Y\right\}  ,$ which determines whether or not $X$
has been observed. The cdf's of $Y$ and $Z$ will be denoted by $G$ and $H$
respectively. The analysis of extreme values of randomly censored data is a
new research topic to which \cite{ReTo7} made a very brief reference, in
Section 6.1, as a first step but with no asymptotic results. Considering
Hall's model \citep{Hall82}, \cite{BGDF07} proposed estimators for the EVI and
high quantiles and discussed their asymptotic properties, when the data are
censored by a deterministic threshold. \cite{EFG2008} adapted various EVI
estimators to the case where data are censored by a random threshold and
proposed a unified method to establish their asymptotic normality. In this
context, the censoring distribution is assumed to be regularly varying too,
that is $\overline{G}\in\mathcal{RV}_{\left(  -1/\gamma_{2}\right)  },$ for
some $\gamma_{2}>0.$\ By virtue of the independence of $X$ and $Y,$ we have
$\overline{H}\left(  x\right)  =\overline{F}\left(  x\right)  \overline
{G}\left(  x\right)  $ and therefore $\overline{H}\in\mathcal{RV}_{\left(
-1/\gamma\right)  },$\ with $\gamma:=\gamma_{1}\gamma_{2}/\left(  \gamma
_{1}+\gamma_{2}\right)  .$ We also assume that both $\overline{F}$ and
$\overline{G}$ satisfy the second-order condition of regularly varying
functions:%
\begin{align}
\overline{F}\left(  x\right)   &  =C_{1}x^{-1/\gamma_{1}}\left(
1+D_{1}x^{-\beta_{1}}\left(  1+o\left(  1\right)  \right)  \right)  ,\text{ as
}x\rightarrow\infty,\label{C1}\\
\overline{G}\left(  y\right)   &  =C_{2}y^{-1/\gamma_{2}}\left(
1+D_{2}y^{-\beta_{2}}\left(  1+o\left(  1\right)  \right)  \right)  ,\text{ as
}y\rightarrow\infty, \label{C3}%
\end{align}
where $\beta_{1},\beta_{2},C_{1},C_{2}$ are positive constants and
$D_{1},D_{2}$ are real constants. The parameters $\tau_{i}:=-\beta_{i}%
\gamma_{i}<0,$ $i=1,2$ called the second-order parameters corresponding to
cdf's $F$ and $G$ respectively. This class of cdf's is known by Hall's models
which contains the most usual Pareto-type cdf's, namely Burr, Fr\'{e}chet,
GEV, GPD, Student, etc. The previous two conditions together imply that
\begin{equation}
\overline{H}\left(  z\right)  =Cz^{-1/\gamma}\left(  1+D_{\ast}z^{-\beta
_{\ast}\gamma}\left(  1+o\left(  1\right)  \right)  \right)  ,\text{ as
}z\rightarrow\infty, \label{H}%
\end{equation}
where $C:=C_{1}C_{2},$ $\beta_{\ast}:=\min\left(  \beta_{1},\beta_{2}\right)
$ and
\[
D_{\ast}:=D_{1}\mathbf{1}\left\{  \beta_{1}<\beta_{2}\right\}  +D_{2}%
\mathbf{1}\left\{  \beta_{1}>\beta_{2}\right\}  +\left(  D_{1}+D_{2}\right)
\mathbf{1}\left\{  \beta_{1}=\beta_{2}\right\}  .
\]
Let $\left\{  \left(  Z_{1},\delta_{1}\right)  ,...,\left(  Z_{n},\delta
_{n}\right)  \right\}  $ be a sample from the couple of rv's $\left(
Z,\delta\right)  $ and let $Z_{1:n}\leq...\leq Z_{n:n}$\ denote the order
statistics pertaining to $\left(  Z_{1},...,Z_{n}\right)  .$ If we denote the
concomitant of the $i$th order statistic by $\delta_{\left(  i\right)  }$
(i.e. $\delta_{\left(  i\right)  }=\delta_{j}$ if $Z_{i:n}=Z_{j}),$\ then the
adapted Hill estimator of the tail index $\gamma_{1}$ is defined by%
\begin{equation}
\widehat{\gamma}_{1,k}^{\left(  EFG\right)  }:=\frac{\widehat{\gamma}%
_{k}^{\left(  H\right)  }}{\widehat{p}_{k}}, \label{AH}%
\end{equation}
where $\widehat{\gamma}_{k}^{\left(  H\right)  }$ is Hill's estimator of the
tail index $\gamma$ and $\widehat{p}_{k}:=k^{-1}\sum\limits_{i=1}^{k}%
\delta_{\left(  n-i+1\right)  }$ denotes the estimator of asymptotic
proportion of non-censored observations in the tail given by
\begin{equation}
p:=\frac{\gamma}{\gamma_{1}}=\frac{\gamma_{2}}{\gamma_{1}+\gamma_{2}},
\label{p}%
\end{equation}
which is the limit, as $z\rightarrow\infty,$ of function%
\begin{equation}
p\left(  z\right)  :=\mathbf{P}\left(  \delta=1\mid Z=z\right)  . \label{pz}%
\end{equation}
Asymptotic representations both to $\widehat{\gamma}_{k}^{\left(  H\right)  }$
and\ $\widehat{p}_{k}$ in terms of Brownian bridges processes, given by
\cite{BMN15}, leading to the asymptotic normality of $\widehat{\gamma}%
_{1,k}^{\left(  EFG\right)  }$ under the usual second-order conditions of
regularly varying functions. In this context, the estimation of the
conditional tail index is addressed in \cite{Ndao14}, \cite{Ndao16},
\cite{Stup16} and \cite{Goeg2019}. Recently \cite{Stup19} assumed that the
censoring and the censored rv's are dependant and proposed an estimation
procedure to $\gamma_{1}.$\medskip

\noindent By using a Kaplan-Meier integral \cite{BWW2019} proposed in new
estimator to $\gamma_{1}$ defined by%
\begin{equation}
\widehat{\gamma}_{1,k}^{\left(  W\right)  }:=\sum_{j=2}^{k}\frac{\overline
{F}_{n}^{KM}\left(  Z_{n-j+1:n}\right)  }{\overline{F}_{n}^{KM}\left(
Z_{n-k:n}\right)  }\log\frac{Z_{n-j+1:n}}{Z_{n-j:n}}, \label{WW}%
\end{equation}
where%
\[
\overline{F}_{n}^{KM}\left(  t\right)  :=\left\{
\begin{array}
[c]{lc}%
\prod\limits_{i=1}^{n}\left(  1-\dfrac{\delta_{\left(  i\right)  }}%
{n-i+1}\right)  ^{\mathbf{1}\left\{  Z_{i:n}\leq t\right\}  } & \text{if
}t<Z_{n:n}\medskip\\
0 & \text{otherwise}%
\end{array}
\right.  ,
\]
denotes the well-known Kaplan-Meier product-limit estimator \citep{KM58} of
the underlying cdf $F.$ Actually $\widehat{\gamma}_{1,k}^{\left(  W\right)  }$
is a slight modification of the tail index estimator
\[
\widetilde{\gamma}_{1,k}^{\left(  W\right)  }:=\sum_{j=1}^{k}\frac
{\overline{F}_{n}^{KM}\left(  Z_{n-j:n}\right)  }{\overline{F}_{n}^{KM}\left(
Z_{n-k:n}\right)  }\log\frac{Z_{n-j+1:n}}{Z_{n-j:n}},
\]
first given by \cite{WW2014}. We showed in Proposition $\ref{Propo2}$ that the
increments
\[
\frac{\overline{F}_{n}^{KM}\left(  Z_{n-j:n}\right)  }{\overline{F}_{n}%
^{KM}\left(  Z_{n-k:n}\right)  }-\frac{\overline{F}_{n}^{KM}\left(
Z_{n-j+1:n}\right)  }{\overline{F}_{n}^{KM}\left(  Z_{n-k:n}\right)  },\text{
}j=1,...,k,
\]
are negligible (in probability) for all large $n,$ this means that the
estimation of $\overline{F}_{n}^{KM}\left(  Z_{n-j:n}\right)  /\overline
{F}_{n}^{KM}\left(  Z_{n-k:n}\right)  $ by $\overline{F}_{n}^{KM}\left(
Z_{n-j+1:n}\right)  /\overline{F}_{n}^{KM}\left(  Z_{n-k:n}\right)  $ is well
justified. The authors also showed that for a suitable sequence of integer
$k,$ we have
\begin{equation}
\sqrt{k}\left(  \widehat{\gamma}_{1,k}^{\left(  W\right)  }-\gamma_{1}\right)
\overset{\mathcal{D}}{\rightarrow}\mathcal{N}\left(  \lambda m,p\gamma_{1}%
^{2}/\left(  2p-1\right)  \right)  ,\text{ as }n\rightarrow\infty, \label{naw}%
\end{equation}
for some real constants $\lambda,$ where $m:=-\mathbf{1}\left\{  \beta_{1}%
\leq\beta_{2}\right\}  \gamma^{2}\beta_{1}D_{1}C^{-\gamma\beta_{1}}%
p^{-1}\left(  1+\beta_{1}\gamma/p\right)  ^{-1},$ provided that $p>1/2$ which
is equivalent to $\gamma_{1}<\gamma_{2}.$ In the real-life applications, the
latter assumption may be realizable. Indeed, \cite{BMV18} gave an application
to insurance data and claim that exhibits heavy-tail and part of these can be
considered to satisfy the $p>1/2.$ Through simulations, \cite{WW2014} showed
that $\widehat{\gamma}_{1,k}^{\left(  W\right)  }$ performs better than the
adapted Hill estimator $\widehat{\gamma}_{1,k}^{\left(  EFG\right)  },$ in the
weak-censoring case $p>1/2,$ both in term of bias and mean squared error
(MSE). However the estimator exhibits a slightly high bias which is natural
when one deals with Hill-type estimators. Instead of Kaplan-Meier approach,
\cite{BMN2016} proposed another asymptotically normal estimator of $\gamma
_{1}$ close to $\widehat{\gamma}_{1,k}^{\left(  W\right)  }$ that is based of
the Nelson-Aalen nonparametric estimator \citep{Nelson}, which seems to have a
slightly lower bias compared with that of $\widehat{\gamma}_{1,k}^{\left(
W\right)  }.$ As expected, the asymptotic biases and variances corresponding
to the two estimators meet.\medskip

\noindent Recently \cite{BAB21} proposed the following class of kernel
estimators defined by%
\begin{equation}
\widehat{\gamma}_{1,k}^{\left(  BAB\right)  }\left(  \mathcal{K}\right)
:=\frac{1}{k}\sum_{i=1}^{k}\mathcal{K}\left(  \frac{i}{k+1},\widehat{p}%
_{k}\right)  \frac{1}{\log\left(  \left(  k+1\right)  /i\right)  }\log
\frac{Z_{n-i+1:n}}{Z_{n-k:n}}, \label{BAB}%
\end{equation}
where $\widehat{p}_{k}$ is given in $\left(  \ref{AH}\right)  $ and
$\mathcal{K}$ is a positive kernel satisfying $p\int_{0}^{1}\mathcal{K}\left(
s,p\right)  du=1,$ for $p\in\left(  0,1\right]  .$ The particular kernel
functions used by the authors are:
\begin{equation}
\mathcal{K}_{0}\left(  s,p\right)  :=\frac{1}{p}\log\frac{1}{s},\text{
}\mathcal{K}_{1}\left(  s,p\right)  :=s^{p-1},\text{ }\mathcal{K}_{2}\left(
s,p\right)  :=\frac{s^{p-1}-1}{1-p}. \label{Kbis}%
\end{equation}
Since $\widehat{\gamma}_{1,k}^{\left(  BAB\right)  }\left(  \mathcal{K}%
_{0}\right)  \equiv\widehat{\gamma}_{1,k}^{\left(  EFG\right)  },$ then this
kernel estimator can be viewed as a generalization of$\ \widehat{\gamma}%
_{1,k}^{\left(  EFG\right)  },$ in terms of weight coefficients. As mentioned
in their paper, Worms's estimator $\widehat{\gamma}_{1,k}^{\left(  W\right)  }
$ does not fall into this framework, but it simplified version $\widehat
{\gamma}_{1,k}^{\left(  BAB\right)  }\left(  \mathcal{K}\right)  $ does. In
their simulation study, \cite{BAB21} pointed out that the MSE characteristics
of the estimator $\widehat{\gamma}_{1,k}^{\left(  BAB\right)  }\left(
\mathcal{K}_{2}\right)  $ are quite comparable to those of $\widehat{\gamma
}_{1,k}^{\left(  W\right)  }.$ Overall, however, $\widehat{\gamma}%
_{1,k}^{\left(  W\right)  }$ performs better $\widehat{\gamma}_{1,k}^{\left(
BAB\right)  }\left(  \mathcal{K}\right)  $ in terms of bias for the three
kernel functions. The asymptotic normality of this estimator is established by
considering both weak and strong censoring cases $\left(  \text{i.e. }%
0<p\leq1\right)  .$ We can summarize the features of BAB's estimator in two
points: its smoothness compared with $\widehat{\gamma}_{1,k}^{\left(
EFG\right)  }$ and its asymptotic normality which is hold for all $0<p\leq1$
however that of Worms's one is limited only to the interval $p>1/2.$
$\medskip$

\noindent In the following section we introduce a new kernel estimator for the
tail index $\gamma_{1}$ that generalizes $\widehat{\gamma}_{1,k}^{\left(
W\right)  }$ is the sense that the two estimators coincide for the indicator
function $K_{1}.$ In other terms, this new kernel estimator is a
generalization of CDM's estimator $\widehat{\gamma}_{1,k}^{\left(  CDM\right)
}\left(  K\right)  $ to the case of censored data.

\subsection{A new kernel estimator for $\gamma_{1}$}

\noindent By using Potter's inequalities, see e.g. Proposition B.1.10 in
\cite{deHF06}, to the regularly varying function $F$ together with assumptions
$[A1]-[A4],$ \cite{BchMN2016} showed that%
\[
\lim_{u\rightarrow\infty}\int_{u}^{\infty}g_{K}^{\prime}\left(  \frac
{\overline{F}\left(  x\right)  }{\overline{F}\left(  u\right)  }\right)
\log\frac{x}{u}d\frac{F\left(  x\right)  }{\overline{F}\left(  u\right)
}=\gamma_{1}\int_{0}^{\infty}K\left(  x\right)  dx=\gamma_{1},
\]
where $g_{K}\left(  x\right)  :=xK\left(  x\right)  $ and $g^{\prime}$ denotes
the Lebesgue derivative of $g.$ Since $\overline{F}$ is continuous, then
$\overline{F}\left(  x\right)  =\overline{F}\left(  x^{-}\right)  ,$ this
allows us to write
\[
\frac{\overline{F}\left(  x\right)  }{\overline{F}\left(  u\right)  }%
=\theta_{x,u}\frac{\overline{F}\left(  x^{-}\right)  }{\overline{F}\left(
u\right)  }+\left(  1-\theta_{x,u}\right)  \frac{\overline{F}\left(  x\right)
}{\overline{F}\left(  u\right)  },\text{ for }x>u,
\]
for some arbitrary real number $0<\theta_{x,u}<1.$ Next we will see that,
thanks to the mean value theorem, this formula provides us an estimation of
the derivative $g^{\prime}$ in terrmes of an increment of function $g;$ see
equation $\left(  \ref{mv}\right)  $ below. The notation $\psi\left(
a^{-}\right)  :=\lim_{x\uparrow a}\psi\left(  x\right)  $ stands for the
left-limit of a function $\psi\left(  t\right)  $ as $t$ approaches $a$ from
the left. By letting $u=Z_{n-k:n}$ and substituting $F$ by Kaplan-Meier
estimator $F_{n}^{KM},$ we derive a kernel estimator to the tail index
$\gamma_{1}$ defined by
\[
\widetilde{\gamma}_{1,k}\left(  K\right)  :=\int_{Z_{n-k:n}}^{\infty}%
g_{K}^{\prime}\left(  \mathcal{F}_{n}\left(  x\right)  \right)  \log\left(
\frac{x}{Z_{n-k:n}}\right)  \frac{dF_{n}^{KM}\left(  x\right)  }{\overline
{F}_{n}^{KM}\left(  Z_{n-k:n}\right)  },
\]
where%
\[
\mathcal{F}_{n}\left(  x\right)  :=\theta_{x,n}\frac{\overline{F}_{n}%
^{KM}\left(  x^{-}\right)  }{\overline{F}_{n}^{KM}\left(  Z_{n-k:n}\right)
}+\left(  1-\theta_{x,n}\right)  \frac{\overline{F}_{n}^{KM}\left(  x\right)
}{\overline{F}_{n}^{KM}\left(  Z_{n-k:n}\right)  },
\]
and $\theta_{x,n}:=\theta_{x,Z_{n-k:n}}$ $\left(  \text{arbitrary}\right)  .$
To rewrite the previous integral into a sum form, we use the following crucial
equation: for a given functional $\phi\left(  \cdot;F\right)  ,$ we have
\begin{equation}
\int\phi\left(  x;F\right)  dF\left(  x\right)  =\int\frac{\phi\left(
z;F\right)  }{\overline{G}\left(  z\right)  }\left\{  \overline{G}\left(
z\right)  dF\left(  z\right)  \right\}  =\int\frac{\phi\left(  z;F\right)
}{\overline{G}\left(  z\right)  }dH^{1}\left(  z\right)  , \label{eq}%
\end{equation}
where $H^{1}\left(  z\right)  :=\mathbf{P}\left(  Z\leq z,\delta=1\right)
=\int_{0}^{z}\overline{G}\left(  x\right)  dF\left(  x\right)  ,$ see for
instance \cite{Stute95}. The empirical counterparts of integrals in $\left(
\ref{eq}\right)  $ are
\[
\int\phi\left(  x;F_{n}^{KM}\right)  dF_{n}^{KM}\left(  x\right)  =\int
\frac{\phi\left(  z;F_{n}^{KM}\right)  }{\overline{G}_{n}^{KM}\left(
z^{-}\right)  }dH_{n}^{1}\left(  z\right)  ,
\]
where
\[
\overline{G}_{n}^{KM}\left(  t\right)  :=\left\{
\begin{array}
[c]{lc}%
{\displaystyle\prod\limits_{i=1}^{n}}
\left(  1-\dfrac{1-\delta_{\left(  i\right)  }}{n-i+1}\right)  ^{\mathbf{1}%
\left\{  Z_{i:n}\leq t\right\}  } & \text{if }t<Z_{n:n}\medskip\\
0 & \text{otherwise}%
\end{array}
\right.  ,
\]
denotes the Kaplan-Meire estimator of cdf $G$ and $H_{n}^{1}\left(  z\right)
:=n^{-1}\sum\limits_{i=1}^{n}\mathbf{1}\left\{  Z_{i}\leq z,\text{ }\delta
_{i}=1\right\}  ,$ is the empirical counterpart of the subdisribution function
$H^{1}.$ We used $\overline{G}_{n}^{KM}\left(  t^{-}\right)  $ instead of
$\overline{G}_{n}^{KM}\left(  t\right)  $ to avoid a division by zero, besides
that $\overline{G}_{n}^{KM}\left(  Z_{n:n}\right)  =0.$ For convenience, we
set
\[
\phi_{n}\left(  x;F\right)  :=\frac{1}{\overline{F}_{n}^{KM}\left(
Z_{n-k:n}\right)  }g_{K}^{\prime}\left(  \mathcal{F}_{n}\left(  x\right)
\right)  \log\frac{x}{Z_{n-k:n}}\mathbf{1}\left\{  x>Z_{n-k:n}\right\}  .
\]
Thus, the kernel estimator $\widetilde{\gamma}_{1,k}\left(  K\right)  $ may be
rewritten into%
\[
\int_{0}^{\infty}\phi_{n}\left(  x;F_{n}^{KM}\right)  dF_{n}\left(  x\right)
=\int_{0}^{\infty}\frac{\phi_{n}\left(  z;F_{n}^{KM}\right)  }{\overline
{G}_{n}^{KM}\left(  z^{-}\right)  }dH_{n}^{1}\left(  z\right)  ,
\]
which equals%
\begin{align*}
&  \sum_{i=1}^{n}\frac{\mathbf{1}\left\{  Z_{i:n}>Z_{n-k:n}\right\}
}{\overline{F}_{n}^{KM}\left(  Z_{n-k:n}\right)  }\frac{\delta_{\left(
i\right)  }}{n\overline{G}_{n}^{KM}\left(  Z_{i:n}^{-}\right)  }g_{K}^{\prime
}\left(  \mathcal{F}_{n}\left(  Z_{i:n}\right)  \right)  \log\frac{Z_{i:n}%
}{Z_{n-k:n}}.\\
&  =\sum_{i=n-k+1}^{n}\frac{\delta_{\left(  i\right)  }}{n\overline{F}%
_{n}^{KM}\left(  Z_{n-k:n}\right)  \overline{G}_{n}^{KM}\left(  Z_{i-1:n}%
\right)  }g_{K}^{\prime}\left(  \mathcal{F}_{n}\left(  Z_{i:n}\right)
\right)  \log\frac{Z_{i:n}}{Z_{n-k:n}},
\end{align*}
where%
\[
\mathcal{F}_{n}\left(  Z_{i:n}\right)  :=\theta_{i,n}\frac{\overline{F}%
_{n}^{KM}\left(  Z_{i-1:n}\right)  }{\overline{F}_{n}^{KM}\left(
Z_{n-k:n}\right)  }+\left(  1-\theta_{i,n}\right)  \frac{\overline{F}_{n}%
^{KM}\left(  Z_{i:n}\right)  }{\overline{F}_{n}^{KM}\left(  Z_{n-k:n}\right)
},
\]
where $\left(  \theta_{i,n}\right)  _{1\leq i\leq k}$ is an arbitrary random
sequence. By changing the index of summation $i$ to $n-j+1,$ yields%
\begin{equation}
\widetilde{\gamma}_{1,k}\left(  K\right)  =\sum_{j=1}^{k}\frac{\delta_{\left(
n-j+1\right)  }}{n\overline{F}_{n}^{KM}\left(  Z_{n-k:n}\right)  \overline
{G}_{n}^{KM}\left(  Z_{n-j:n}\right)  }g_{K}^{\prime}\left(  \mathcal{F}%
_{n}\left(  Z_{n-j+1:n}\right)  \right)  \log\frac{Z_{n-j+1:n}}{Z_{n-k:n}%
}.\nonumber
\end{equation}
We showed in Proposition $\ref{Propo1}$ that%
\[
\frac{\delta_{\left(  n-j+1\right)  }}{n\overline{G}_{n}^{KM}\left(
Z_{n-j:n}\right)  }=\overline{F}_{n}^{KM}\left(  Z_{n-j:n}\right)
-\overline{F}_{n}^{KM}\left(  Z_{n-j+1:n}\right)  ,
\]
therefore%
\begin{align*}
\widetilde{\gamma}_{1,k}\left(  K\right)   &  =\sum_{j=1}^{k}\left\{
\frac{\overline{F}_{n}^{KM}\left(  Z_{n-j:n}\right)  }{\overline{F}_{n}%
^{KM}\left(  Z_{n-k:n}\right)  }-\frac{\overline{F}_{n}^{KM}\left(
Z_{n-j+1:n}\right)  }{\overline{F}_{n}^{KM}\left(  Z_{n-k:n}\right)  }\right\}
\\
&  \ \ \ \ \ \ \ \ \ \ \ \ \ \ \times g_{K}^{\prime}\left(  \mathcal{F}%
_{n}\left(  Z_{n-j+1:n}\right)  \right)  \log\frac{Z_{n-j+1:n}}{Z_{n-k:n}}.
\end{align*}
In view of the mean value theorem, we may choose the sequence of constants
$\theta_{j,n}$ so that%
\begin{equation}
g_{K}^{\prime}\left(  \mathcal{F}_{n}\left(  Z_{n-j+1:n}\right)  \right)
=\frac{g_{K}\left(  \frac{\overline{F}_{n}^{KM}\left(  Z_{n-j:n}\right)
}{\overline{F}_{n}^{KM}\left(  Z_{n-k:n}\right)  }\right)  -g_{K}\left(
\frac{\overline{F}_{n}^{KM}\left(  Z_{n-j+1:n}\right)  }{\overline{F}_{n}%
^{KM}\left(  Z_{n-k:n}\right)  }\right)  }{\frac{\overline{F}_{n}^{KM}\left(
Z_{n-j:n}\right)  }{\overline{F}_{n}^{KM}\left(  Z_{n-k:n}\right)  }%
-\frac{\overline{F}_{n}^{KM}\left(  Z_{n-j+1:n}\right)  }{\overline{F}%
_{n}^{KM}\left(  Z_{n-k:n}\right)  }}, \label{mv}%
\end{equation}
thus
\begin{equation}
\widetilde{\gamma}_{1,k}\left(  K\right)  =\sum_{j=1}^{k}\left\{  g_{K}\left(
\frac{\overline{F}_{n}^{KM}\left(  Z_{n-j:n}\right)  }{\overline{F}_{n}%
^{KM}\left(  Z_{n-k:n}\right)  }\right)  -g_{K}\left(  \frac{\overline{F}%
_{n}^{KM}\left(  Z_{n-j+1:n}\right)  }{\overline{F}_{n}^{KM}\left(
Z_{n-k:n}\right)  }\right)  \right\}  \log\frac{Z_{n-j+1:n}}{Z_{n-k:n}}.
\label{gtild}%
\end{equation}
Recall that $g_{K}\left(  x\right)  =xK\left(  x\right)  $ and let
\[
a_{j}:=g_{K}\left(  \frac{\overline{F}_{n}^{KM}\left(  Z_{n-j:n}\right)
}{\overline{F}_{n}^{KM}\left(  Z_{n-k:n}\right)  }\right)  \text{ and }%
b_{j}=\log\frac{Z_{n-j:n}}{Z_{n-k:n}}.
\]
By applying Proposition $\ref{Propo3},$ we may rewrite formula $\left(
\ref{gtild}\right)  $ into%
\begin{equation}
\widetilde{\gamma}_{1,k}\left(  K\right)  =\sum_{j=1}^{k}\frac{\overline
{F}_{n}^{KM}\left(  Z_{n-j:n}\right)  }{\overline{F}_{n}^{KM}\left(
Z_{n-k:n}\right)  }K\left(  \frac{\overline{F}_{n}^{KM}\left(  Z_{n-j:n}%
\right)  }{\overline{F}_{n}^{KM}\left(  Z_{n-k:n}\right)  }\right)  \log
\frac{Z_{n-j+1:n}}{Z_{n-j:n}}. \label{K}%
\end{equation}
By using the same modification as made, in \cite{BWW2019}, to the original
formula of Worms's estimator $\widetilde{\gamma}_{1,k}^{\left(  W\right)  },$
that is substituting $\overline{F}_{n}^{KM}\left(  Z_{n-j:n}\right)
/\overline{F}_{n}^{KM}\left(  Z_{n-k:n}\right)  $ by $\overline{F}_{n}%
^{KM}\left(  Z_{n-j+1:n}\right)  /\overline{F}_{n}^{KM}\left(  Z_{n-k:n}%
\right)  ,$ we end up to the final form of our new kernel estimator given by%
\begin{equation}
\widehat{\gamma}_{1,k}\left(  K\right)  :=\sum_{j=2}^{k}\frac{\overline{F}%
_{n}^{KM}\left(  Z_{n-j+1:n}\right)  }{\overline{F}_{n}^{KM}\left(
Z_{n-k:n}\right)  }K\left(  \frac{\overline{F}_{n}^{KM}\left(  Z_{n-j+1:n}%
\right)  }{\overline{F}_{n}^{KM}\left(  Z_{n-k:n}\right)  }\right)  \log
\frac{Z_{n-j+1:n}}{Z_{n-j:n}}. \label{KF}%
\end{equation}
It is obvious that $\widehat{\gamma}_{1,k}\left(  K_{1}\right)  $ coincides
with Worms's estimator $\widehat{\gamma}_{1,k}^{\left(  W\right)  }$ stated in
$\left(  \ref{WW}\right)  .$ For the sake of simplicity, from now on where
there is no conflict, we limit ourselves to writing $\widehat{\gamma}_{1,k}$
instead of $\widehat{\gamma}_{1,k}\left(  K\right)  .$ Finally, by using the
bias-reduction approach given by \cite{BWW2019}, we derive an asymptotically
bias-reduced estimator corresponding to $\widehat{\gamma}_{1,k}$ defined by
\begin{equation}
\widehat{\gamma}_{1,k}^{\ast}:=\widehat{\gamma}_{1,k}-\widehat{\rho}\left\{
T_{k}\left(  -\widehat{\tau}_{1}/\widehat{\gamma}_{1,k};K\right)
-\widehat{\gamma}_{1,k}\widehat{\eta}_{2}\right\}  , \label{g1-hate}%
\end{equation}
where $\widehat{\tau}_{1}:=-\beta_{1}\widehat{\gamma}_{1,k}$ is a consistent
estimator of the second-order parameter $\tau_{1}:=-\beta_{1}\gamma_{1}$ of
cdf $F$ in $\left(  \ref{C1}\right)  ,$
\begin{equation}
\widehat{\rho}:=\frac{1}{\widehat{\eta}_{3}/\widehat{\eta}_{2}-\widehat{\eta
}_{1}}, \label{alpha}%
\end{equation}%
\[
\widehat{\eta}_{1}:=\int_{0}^{1}s^{-\widehat{\tau}_{1}}\left(  1-\widehat
{\tau}_{1}\log s\right)  K\left(  s\right)  ds,
\]%
\[
\widehat{\eta}_{2}:=\int_{0}^{1}s^{-\widehat{\tau}_{1}}K\left(  s\right)
ds,\text{ }\widehat{\eta}_{3}:=\int_{0}^{1}\left(  s^{-\widehat{\tau}_{1}%
}-s^{-2\widehat{\tau}_{1}}\right)  K\left(  s\right)  ds
\]
and
\begin{equation}%
\begin{array}
[c]{l}%
T_{k}\left(  \omega;K\right)  :=\dfrac{1}{\omega}%
{\displaystyle\sum\limits_{j=2}^{k}}
\dfrac{\overline{F}_{n}^{KM}\left(  Z_{n-j+1:n}\right)  }{\overline{F}%
_{n}^{KM}\left(  Z_{n-k:n}\right)  }K\left(  \dfrac{\overline{F}_{n}%
^{KM}\left(  Z_{n-j+1:n}\right)  }{\overline{F}_{n}^{KM}\left(  Z_{n-k:n}%
\right)  }\right)  \medskip\\
\ \ \ \ \ \ \ \ \ \ \ \ \ \ \ \ \ \ \ \ \ \ \ \ \ \ \times\left\{  \left(
\dfrac{Z_{n-j:n}}{Z_{n-k:n}}\right)  ^{-\omega}-\left(  \dfrac{Z_{n-j+1:n}%
}{Z_{n-k:n}}\right)  ^{-\omega}\right\}  ,\text{ }\omega>0.
\end{array}
\label{T-omega}%
\end{equation}
We checked when one substitute $K$ by the indicator kernel function $K_{1},$
$\widehat{\tau}_{1}$ by $-\beta_{1}\widehat{\gamma}_{1,k}$ and then
$\widehat{p}$ by $\widehat{\gamma}/\widehat{\gamma}_{1},$ the kernel
reduced-bias estimator $\widehat{\gamma}_{1,k}^{\ast}$ meets that of Worms's
one stated in \cite{BWW2019} (equation $\left(  9\right)  ).$ \medskip\ 

\noindent To the best of our knowledge, there is no estimator for $\tau_{1}, $
however there is an adaptive estimation method proposed by \cite{BMV18}, which
is based on the minimization of the sample variance to the corresponding
bias-reduced estimator of $\gamma_{1}.$ This adaptive estimator is defined by
$\widehat{\tau}_{1}:=\arg\min_{\tau_{1}\in\mathcal{A}}\sum_{k=2}^{n}\left(
\widehat{\gamma}_{1,k}^{\ast}-\overline{\widehat{\gamma}}_{1}^{\ast}\right)
^{2},$ where $\overline{\widehat{\gamma}}_{1}^{\ast}:=n^{-1}\sum_{k=2}%
^{n}\widehat{\gamma}_{1,k}^{\ast}$ and $\mathcal{A}:=\left\{
-0.5-0.1i\right\}  _{0\leq i\leq25}.$ The rest of the paper is organized as
follows. In Section 2, we present our main result, namely the asymptotic
normality both of $\widehat{\gamma}_{1,k}$ and $\widehat{\gamma}_{1,k}^{\ast}$
whose proofs are postponed to Section 4. The finite sample behavior of the
proposed estimators is checked by simulation in Section 3, where a comparison
with the already existing ones is made as well. Finally, some instrumental
Propositions and Lemmas are stated in the Appendix.

\section{\textbf{Main results\label{sec3}}}

\begin{theorem}
\textbf{\label{Theorem1}}Assume that both second-order conditions $\left(
\ref{C1}\right)  $ and $\left(  \ref{C3}\right)  $ hold. Let $k=k_{n}$ be a
sequence of integer such that $\sqrt{k}\left(  k/n\right)  ^{\gamma\beta
_{\ast}}\rightarrow\lambda,$ and if $\lambda=0$ that $n=O\left(  n^{B}\right)
$ for sufficiently large $B>0.$ For a given kernel function $K$ satisfying
assumptions $\left[  A1\right]  -\left[  A4\right]  ,$ we have $\sqrt
{k}\left(  \widehat{\gamma}_{1,k}-\gamma_{1}\right)  \overset{\mathcal{D}%
}{\rightarrow}\mathcal{N}\left(  \lambda m_{K},\sigma_{K}^{2}\right)  ,$ as
$n\rightarrow\infty,$ provided that $p>1/2,$ where $\sigma_{K}^{2}:=\gamma
_{1}^{2}\int_{0}^{1}s^{-1/p+1}K^{2}\left(  s\right)  ds$ and%
\begin{equation}
m_{K}:=-\mathbf{1}\left\{  \beta_{1}\leq\beta_{2}\right\}  \beta_{1}%
D_{1}C^{-\gamma\beta_{1}}\gamma_{1}^{2}\int_{0}^{1}s^{\beta_{1}\gamma_{1}%
}K\left(  s\right)  ds. \label{mk}%
\end{equation}

\end{theorem}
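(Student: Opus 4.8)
The plan is to follow the Gaussian-approximation route developed for Worms's estimator $\widehat{\gamma}_{1,k}^{\left(  W\right)  }=\widehat{\gamma}_{1,k}\left(  K_{1}\right)  $ in \cite{WW2014}, \cite{BMN15} and \cite{BWW2019}, but now for a general weight $K$. The structural identity I would start from is
\[
\frac{\overline{F}_{n}^{KM}\left(  Z_{n-j+1:n}\right)  }{\overline{F}_{n}^{KM}\left(  Z_{n-k:n}\right)  }=\prod_{l=j}^{k}\left(  1-\frac{\delta_{\left(  n-l+1\right)  }}{l}\right)  ,\qquad 2\le j\le k,
\]
combined with the facts that the upper concomitants $\delta_{\left(  n-l+1\right)  }$ behave asymptotically like iid $\mathrm{Bernoulli}\left(  p\right)  $ variables (because $p\left(  z\right)  \rightarrow p$) and that, by the R\'{e}nyi representation, $\log\left(  Z_{n-j+1:n}/Z_{n-j:n}\right)  $ equals $\gamma$ times a spacing of standard exponential order statistics plus a second-order term of size $\left(  k/n\right)  ^{\gamma\beta_{\ast}}$. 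The first concrete step is to invoke Propositions \ref{Propo1}--\ref{Propo3} of the Appendix to pass from the estimator $\widehat{\gamma}_{1,k}\left(  K\right)  $ of \eqref{KF} to the more convenient forms \eqref{gtild} and \eqref{K}: these propositions guarantee exactly that the mean-value substitution \eqref{mv}, the Abel-type rearrangement, and the index shift \eqref{K}$\rightarrow$\eqref{KF} produce only errors that are $o_{\mathbf{P}}\left(  k^{-1/2}\right)  $, where the boundedness of $K,K^{\prime},K^{\prime\prime}$ in $\left[  A4\right]  $ and the monotonicity in $\left[  A1\right]  $ are used.

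I would then split $\sqrt{k}\left(  \widehat{\gamma}_{1,k}-\gamma_{1}\right)  $ into a deterministic skeleton and a centred stochastic part. For the skeleton, setting $j=\lceil ks\rceil$ turns the Riemann sum $\sum_{j}g_{K}\left(  \left(  j/k\right)  ^{p}\right)  \gamma/j$ into $\gamma\int_{0}^{1}g_{K}\left(  s^{p}\right)  s^{-1}\,ds=\left(  \gamma/p\right)  \int_{0}^{1}K=\gamma_{1}$ (using $g_{K}\left(  x\right)  =xK\left(  x\right)  $, $p=\gamma/\gamma_{1}$ and $\int_{0}^{1}K=1$), while the second-order remainder multiplied by $\sqrt{k}$ converges to $\lambda m_{K}$. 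The key point in this step is that the part of the $\overline{H}$-second-order bias that originates from $\overline{G}$ (the term built on $D_{2}$) cancels against the Kaplan--Meier normalisation $\overline{F}_{n}^{KM}\left(  Z_{n-k:n}\right)  $, so that only the ``pure $\overline{F}$'' second-order term, with coefficient $D_{1}C^{-\gamma\beta_{1}}$, survives; it survives at the rate $\left(  k/n\right)  ^{\gamma\beta_{1}}=\left(  k/n\right)  ^{\gamma\beta_{\ast}}$ precisely when $\beta_{1}\le\beta_{2}$ and is of smaller order otherwise, which explains the factor $\mathbf{1}\left\{  \beta_{1}\le\beta_{2}\right\}  $ and the constant $C^{-\gamma\beta_{1}}$ in \eqref{mk}, the remaining power $s^{\beta_{1}\gamma_{1}}$ being produced by the Riemann sum.

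For the stochastic part I would linearise. Writing $w_{j}:=\overline{F}_{n}^{KM}\left(  Z_{n-j+1:n}\right)  /\overline{F}_{n}^{KM}\left(  Z_{n-k:n}\right)  =\left(  j/k\right)  ^{p}\exp\left(  k^{-1/2}\Delta_{j}\right)  $, where the process $s\mapsto\Delta_{\lceil ks\rceil}$ converges weakly to a Gaussian process whose covariance is dictated by the joint behaviour of the Kaplan--Meier product-limit processes attached to $F$ and $G$ and of the uniform tail empirical process (this is where the independence of $X$ and $Y$ and the limit $p\left(  z\right)  \rightarrow p$ enter, via the Appendix lemmas), the Taylor expansion
\[
g_{K}\left(  w_{j}\right)  =g_{K}\left(  \left(  j/k\right)  ^{p}\right)  +g_{K}^{\prime}\left(  \left(  j/k\right)  ^{p}\right)  \left(  j/k\right)  ^{p}k^{-1/2}\Delta_{j}+O\left(  k^{-1}\right)  ,
\]
combined with the exponential-spacing expansion of $\log\left(  Z_{n-j+1:n}/Z_{n-j:n}\right)  $, makes $\sqrt{k}\left(  \widehat{\gamma}_{1,k}-\gamma_{1}-\lambda m_{K}/\sqrt{k}\right)  $ asymptotically equal to a linear functional $\int_{0}^{1}\varphi_{K}\left(  s\right)  \,d\mathbb{W}\left(  s\right)  $ of a Gaussian process $\mathbb{W}$, whose variance works out to $\sigma_{K}^{2}=\gamma_{1}^{2}\int_{0}^{1}s^{-1/p+1}K^{2}\left(  s\right)  \,ds$; the exponent $-1/p+1$ exceeds $-1$, hence that integral is finite, exactly when $p>1/2$, which is why the hypothesis $p>1/2$ is imposed. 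A Lindeberg-type CLT for the resulting triangular array (or, equivalently, invoking the functional weak convergence of the underlying censored tail processes already available in \cite{BMN15}) then gives the stated asymptotic normality; as a consistency check, the formula for $\sigma_{K}^{2}$ at $K=K_{1}$ reduces to $\gamma_{1}^{2}\int_{0}^{1}s^{-1/p+1}\,ds=p\gamma_{1}^{2}/\left(  2p-1\right)  $, in agreement with \eqref{naw}.

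The main obstacle, as is typical for censored tail-index statistics, is the uniform-in-$j$ control of the product $\prod_{l=j}^{k}\left(  1-\delta_{\left(  n-l+1\right)  }/l\right)  $ and of its first-order expansion for the smallest values of $j$ (the extreme tail), where this process is least stable and where the kernel weight $sK\left(  s\right)  $ need not vanish; a second, more clerical, difficulty is checking that all the discretisation and truncation errors are uniformly $o_{\mathbf{P}}\left(  k^{-1/2}\right)  $, for which $\left[  A1\right]  $ and $\left[  A4\right]  $ are needed; and a third is to make the cancellation of the $\overline{G}$-driven part of the second-order bias rigorous, which forces one to carry the second-order expansions of $U_{H}$ and of $\overline{F}_{n}^{KM}$ along simultaneously.
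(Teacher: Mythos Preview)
Your proposal is correct and follows essentially the same route as the paper: both adapt the proof of \cite{BWW2019} for $\widehat{\gamma}_{1,k}^{(W)}$ to general kernel weights, using the product representation of the Kaplan--Meier ratio, the exponential-spacing approximation $\xi_j=\xi_j'+R_{j,n}$ of \cite{BDG2002}, the Bernoulli approximation $\delta_{(n-i+1)}\approx\mathbf{1}\{U_i\le p\}$ of \cite{EFG2008}, a Taylor expansion of $g_K$ (assumption $[A4]$), and a triangular-array CLT for the leading stochastic term. The only difference is presentational: the paper tracks the bias as four explicit pieces $\mathcal{B}_{1,k}+\mathcal{B}_{2,k}-\mathcal{B}_{3,k}+\mathcal{B}_{4,k}$ that collapse to $m_K\sqrt{k}(k/n)^{\gamma\beta_\ast}$ (confirming the $D_2$-cancellation you anticipated), and applies Lyapunov's CLT directly to the centred sum $\sum_i\{p(E_i-1)-(\mathbf{1}\{U_i\le p\}-p)\}$ with weights $u_{i,k}^{-1}\int_0^{u_{i,k}}s^{p-1}g_K'(s^p)\,ds$, whereas you phrase the stochastic part via weak convergence of the process $s\mapsto\Delta_{\lceil ks\rceil}$; the paper's route is slightly more elementary in that it avoids establishing tightness of that process, while your framing would yield a functional result along the way.
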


\begin{remark}
It is clear that $\sigma_{K_{1}}^{2}=p\gamma_{1}^{2}/\left(  2p-1\right)  $
and
\[
m_{K_{1}}=-\mathbf{1}\left\{  \beta_{1}\leq\beta_{2}\right\}  \gamma^{2}%
\beta_{1}D_{1}C^{-\gamma\beta_{1}}p^{-1}\left(  1+\beta_{1}\gamma/p\right)
^{-1},
\]
which coincide respectively with the asymptotic variance and the asymptotic
mean, $\lambda m,$ of Worm's estimator $\widehat{\gamma}_{1,k}^{\left(
W\right)  }$ stated in $\left(  \ref{naw}\right)  .$
\end{remark}

\noindent For the asymptotic normality of the kernel reduced-bias estimator
$\widehat{\gamma}_{1,k}^{\ast},$ we introduce to the following additional
notations:%
\begin{equation}
\eta_{1}\left(  \tau_{1}\right)  :=\int_{0}^{1}s^{-\tau_{1}}\left(  1-\tau
_{1}\log s\right)  K\left(  s\right)  ds, \label{eta1}%
\end{equation}%
\begin{equation}
\eta_{2}\left(  \tau_{1}\right)  :=\int_{0}^{1}s^{-\tau_{1}}K\left(  s\right)
ds,\text{ }\eta_{3}\left(  \tau_{1}\right)  :=\int_{0}^{1}\left(  s^{-\tau
_{1}}-s^{-2\tau_{1}}\right)  K\left(  s\right)  ds \label{eta12}%
\end{equation}
and
\begin{equation}
\rho\left(  \tau_{1}\right)  :=\left(  \eta_{3}\left(  \tau_{1}\right)
/\eta_{2}\left(  \tau_{1}\right)  -\eta_{1}\left(  \tau_{1}\right)  \right)
^{-1}. \label{rho-t}%
\end{equation}
It is worth mentioning that by these new notations, we have $\eta_{i}\left(
\widehat{\tau}_{1}\right)  \equiv\widehat{\eta}_{i},$ $i=1,2,3$ and
$\rho\left(  \widehat{\tau}_{1}\right)  \equiv\widehat{\rho}.$

\begin{theorem}
\textbf{\label{Theorem2}}Assume that the assumptions of Theorem
$\ref{Theorem1}$ hold, then%
\[
\sqrt{k}\left(  \widehat{\gamma}_{1,k}^{\ast}-\gamma_{1}\right)
\overset{\mathcal{D}}{\rightarrow}\mathcal{N}\left(  0,\sigma_{K}^{\ast
2}\right)  ,\text{ as }n\rightarrow\infty,
\]
provided that $p>1/2,$ where $\sigma_{K}^{\ast2}:=p\gamma_{1}^{2}\int_{0}%
^{1}t^{-1/p+1}\left(  \left(  1+\eta_{1}\rho\right)  -\rho s^{-\tau_{1}%
}\right)  ^{2}K^{2}\left(  t\right)  dt,$ with $\eta_{i}:=\eta_{i}\left(
\tau_{1}\right)  ,$ $i=1,2,3$ and $\rho:=\rho\left(  \tau_{1}\right)  .$
\end{theorem}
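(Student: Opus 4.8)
\noindent The plan is to deduce the limit law of $\widehat{\gamma}_{1,k}^{\ast}$ from the asymptotic expansion of $\widehat{\gamma}_{1,k}$ obtained in the course of proving Theorem~\ref{Theorem1}, by producing a matching expansion for the statistic $T_{k}$ and combining the two with Slutsky's lemma and the delta method; structurally this follows the bias-reduction scheme of \cite{BWW2019}, the only genuinely new work being the bookkeeping of the $K$-dependent constants. A preliminary remark is that, since $\widehat{\tau}_{1}=-\beta_{1}\widehat{\gamma}_{1,k}$, the first argument of $T_{k}$ in \eqref{g1-hate} equals the deterministic constant $\beta_{1}>0$, so $T_{k}(-\widehat{\tau}_{1}/\widehat{\gamma}_{1,k};K)=T_{k}(\beta_{1};K)$ is a function of the sample alone. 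From the proof of Theorem~\ref{Theorem1} one has $\sqrt{k}(\widehat{\gamma}_{1,k}-\gamma_{1})=\xi_{n}+\lambda m_{K}+o_{\mathbf{P}}(1)$, where $\xi_{n}$ is a linear functional of the Gaussian process (built from the Kaplan--Meier and tail empirical processes, as in \cite{BMN15} and \cite{BWW2019}) governing Worms's estimator, with $\mathrm{Var}(\xi_{n})\rightarrow\sigma_{K}^{2}$. In particular $\widehat{\gamma}_{1,k}\overset{\mathbf{P}}{\rightarrow}\gamma_{1}$, hence $\widehat{\tau}_{1}\overset{\mathbf{P}}{\rightarrow}\tau_{1}$, and since under $[A1]$--$[A4]$ the maps $\tau\mapsto\eta_{i}(\tau)$ ($i=1,2,3$) and $\tau\mapsto\rho(\tau)$ are continuous at $\tau_{1}$ (indeed smooth, by differentiating under the integral sign, which is legitimate because $s^{-\tau}(\log s)^{\ell}K(s)$ is integrable on $(0,1]$ for $\tau$ near $\tau_{1}<0$), we obtain $\widehat{\eta}_{i}\overset{\mathbf{P}}{\rightarrow}\eta_{i}$ and $\widehat{\rho}\overset{\mathbf{P}}{\rightarrow}\rho$.

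\noindent The second step is to establish, by the same machinery, the expansion $T_{k}(\beta_{1};K)=\gamma_{1}\eta_{2}+\zeta_{n}+\lambda n_{K}+o_{\mathbf{P}}(k^{-1/2})$. Both \eqref{KF} and \eqref{T-omega} share the weight structure $\widehat{R}_{j}K(\widehat{R}_{j})$, $\widehat{R}_{j}:=\overline{F}_{n}^{KM}(Z_{n-j+1:n})/\overline{F}_{n}^{KM}(Z_{n-k:n})$, differing only in that the log-spacings $\log(Z_{n-j+1:n}/Z_{n-j:n})$ of $\widehat{\gamma}_{1,k}$ are replaced by the power-spacings $\beta_{1}^{-1}\{(Z_{n-j:n}/Z_{n-k:n})^{-\beta_{1}}-(Z_{n-j+1:n}/Z_{n-k:n})^{-\beta_{1}}\}$. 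Using that $\widehat{R}_{j}$ concentrates around $(j/k)^{p}$ and $Z_{n-j:n}/Z_{n-k:n}$ around $(k/j)^{\gamma}$, the regular variation of $\overline{F},\overline{G},\overline{H}$, the second-order expansions \eqref{C1}--\eqref{H}, and a mean-value step applied to $x\mapsto x^{-\beta_{1}}$, one shows that $\zeta_{n}$ is a linear functional of the \emph{same} Gaussian process as $\xi_{n}$, now carrying the extra weight $s\mapsto s^{-\tau_{1}}$ inherited from the power-spacing — so $(\xi_{n},\zeta_{n})$ is jointly asymptotically Gaussian — and that $n_{K}=-\mathbf{1}\{\beta_{1}\leq\beta_{2}\}\beta_{1}D_{1}C^{-\gamma\beta_{1}}\gamma_{1}^{2}\eta_{3}$, the $-s^{-2\tau_{1}}$ contribution in $\eta_{3}$ (absent from $m_{K}$ in \eqref{mk}) being generated by the second-order expansion of the power-spacing $(Z_{n-j:n}/Z_{n-k:n})^{-\beta_{1}}$. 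Since $-\tau_{1}=\beta_{1}\gamma_{1}$, the integral $\eta_{2}=\int_{0}^{1}s^{-\tau_{1}}K(s)\,ds$ is exactly the one in \eqref{mk}, so $m_{K}=-\mathbf{1}\{\beta_{1}\leq\beta_{2}\}\beta_{1}D_{1}C^{-\gamma\beta_{1}}\gamma_{1}^{2}\eta_{2}$ and therefore $n_{K}=(\eta_{3}/\eta_{2})\,m_{K}$.

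\noindent The last step is assembly. The bracket $\{T_{k}(\beta_{1};K)-\widehat{\gamma}_{1,k}\widehat{\eta}_{2}\}=O_{\mathbf{P}}(k^{-1/2})$ is a difference of two consistent estimators of $\gamma_{1}\eta_{2}$, so $\widehat{\rho}$ may be replaced by $\rho$ at the cost of $o_{\mathbf{P}}(k^{-1/2})$; and the delta method applied to $\widehat{\eta}_{2}=\eta_{2}(-\beta_{1}\widehat{\gamma}_{1,k})$, together with the elementary identity $\eta_{1}=\eta_{2}+\tau_{1}\eta_{2}^{\prime}$ (which gives $\gamma_{1}\sqrt{k}(\widehat{\eta}_{2}-\eta_{2})=(\eta_{1}-\eta_{2})\sqrt{k}(\widehat{\gamma}_{1,k}-\gamma_{1})+o_{\mathbf{P}}(1)$), yields
\[
\sqrt{k}\bigl(\widehat{\gamma}_{1,k}^{\ast}-\gamma_{1}\bigr)=(1+\rho\eta_{1})\,\xi_{n}-\rho\,\zeta_{n}+\lambda\bigl[(1+\rho\eta_{1})m_{K}-\rho\,n_{K}\bigr]+o_{\mathbf{P}}(1).
\]
Because $n_{K}=(\eta_{3}/\eta_{2})m_{K}$ and $\rho=(\eta_{3}/\eta_{2}-\eta_{1})^{-1}$ by \eqref{rho-t}, the bracketed bias term is identically zero, and the asymptotic normality of $\widehat{\gamma}_{1,k}^{\ast}$ follows from the joint normality of $(\xi_{n},\zeta_{n})$. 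The limiting variance $\sigma_{K}^{\ast2}=\lim\mathrm{Var}\bigl((1+\rho\eta_{1})\xi_{n}-\rho\zeta_{n}\bigr)$ is then obtained by writing $\xi_{n},\zeta_{n}$ as stochastic integrals against the common limiting Gaussian process, with weights $1$ and $s^{-\tau_{1}}$ respectively, so that the combined integrand carries the factor $(1+\rho\eta_{1})-\rho s^{-\tau_{1}}$; this delivers the announced $\sigma_{K}^{\ast2}$, the integral being finite precisely for $p>1/2$ since $s^{-\tau_{1}}\rightarrow0$ as $s\downarrow0$ while the dominant contribution near $0$ is $s^{-1/p+1}(1+\rho\eta_{1})^{2}K^{2}(s)$.

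\noindent The step I expect to be the main obstacle is the companion expansion for $T_{k}(\beta_{1};K)$: reducing the power-spacings to the \emph{same} Gaussian process that controls $\widehat{\gamma}_{1,k}$ (needed for a joint limit with the correct covariance) and — above all — computing the second-order bias constant $n_{K}$ exactly and verifying $n_{K}=(\eta_{3}/\eta_{2})m_{K}$, which is the entire reason the correction \eqref{g1-hate}--\eqref{rho-t} annihilates the dominant bias. The remaining ingredients — consistency of the plug-ins $\widehat{\rho},\widehat{\eta}_{i}$, the delta-method and Slutsky reductions, and the dominated-convergence justifications for the kernel integrals — are routine once $[A1]$--$[A4]$ and $p>1/2$ hold.
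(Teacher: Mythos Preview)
Your outline is correct and follows essentially the same route as the paper: both reduce $T_{k}(\beta_{1};K)$ via a Taylor/mean-value step on the power-spacings to a weighted analogue of $\widehat{\gamma}_{1,k}$ carrying the extra factor $s^{-\tau_{1}}$ (the paper's $g_{K}^{\ast}(s)=s^{-\tau_{1}+1}K(s)$), linearize $\widehat{\gamma}_{1,k}\widehat{\eta}_{2}$ around $\gamma_{1}\eta_{2}$ to produce the coefficient $\eta_{1}$, combine into a single representation with weight $(1+\eta_{1}\rho)-\rho s^{-\tau_{1}}$, and observe that the definition of $\rho$ forces $(1+\eta_{1}\rho)\eta_{2}-\rho\eta_{3}=0$, annihilating the bias. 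The only blemish is a scale slip in your notation for $\zeta_{n}$ (written as $O_{\mathbf P}(k^{-1/2})$ in the $T_{k}$ expansion but then paired with the $O_{\mathbf P}(1)$ term $\xi_{n}$ in the assembly), which is harmless once read consistently.
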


\subsection{\textbf{Discussion on the asymptotic biases and variances of
}$\widehat{\gamma}_{1,k}$\textbf{\ and} $\widehat{\gamma}_{1,k}^{\left(
W\right)  }$}

By considering three kernel functions $K_{2},K_{3}$ and $K_{4}$ introduced in
$\left(  \ref{Kfun}\right)  ,$ we show that the absolute asymptotic bias of
$\widehat{\gamma}_{1,k}$ is less than that of $\widehat{\gamma}_{1,k}^{\left(
W\right)  },$ however the asymptotic variance behaves opposite. In the other
terms $\left\vert m_{K}\right\vert <\left\vert m\right\vert $ and $\sigma
_{K}^{2}>\sigma^{2}.$ Indeed, let us write%
\[
\frac{\left\vert m_{K}\right\vert }{\left\vert m\right\vert }=\left(
1+\beta_{1}\gamma_{1}\right)  \int_{0}^{1}s^{\beta_{1}\gamma_{1}}K\left(
s\right)  ds=\left(  1+t\right)  \int_{0}^{1}s^{t}K\left(  s\right)
ds=:g\left(  t\right)  ,
\]
where $t=\beta_{1}\gamma_{1}>0.$ It is clear from Figure $\ref{Figure2}$ that
$g\left(  t\right)  <1,$ for any $t>0,$ and therefore $\left\vert
m_{K}\right\vert <\left\vert m\right\vert .$ To compare the two variances, let
us write%

\[
\frac{\sigma_{K}^{2}}{\sigma^{2}}=\left(  \frac{p}{2p-1}\right)  ^{-1}\int
_{0}^{1}s^{-1/p+1}K^{2}\left(  s\right)  ds=:h\left(  p\right)  ,
\]
for $p>1/2.$ The Figure $\ref{Figure3}$ shows in turns that $h\left(
p\right)  >1$ for any $p>1/2,$ which implies that $\sigma_{K}^{2}>\sigma^{2}.$
From the two figures we point out that the quadweight kernel provides a better
asymptotic bias compared with other ones, however the asymptotic variance of
its corresponding tail index estimator is the biggest one. Then for a
bais-variance trade-off, we suggest using the triweight kernel function.%

\begin{figure}
[ptb]
\begin{center}
\includegraphics[
height=2.4111in,
width=5.521in
]%
{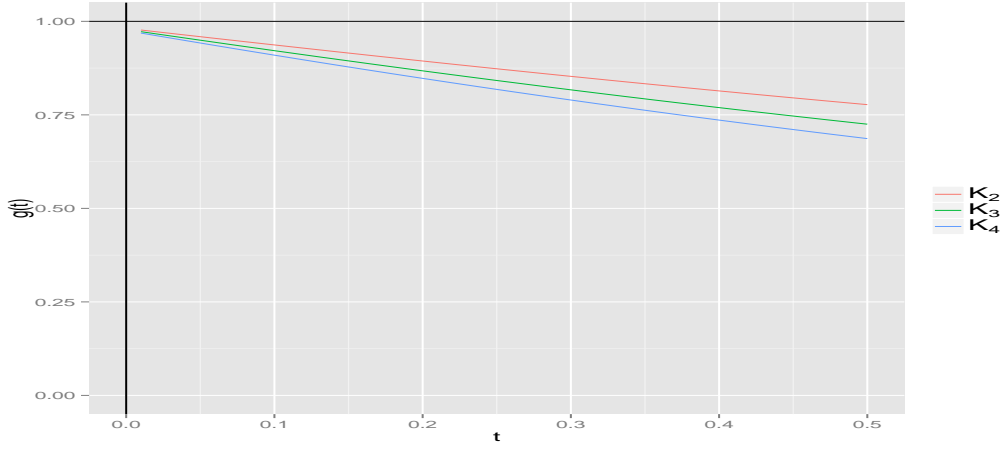}%
\caption{Plotting the function $g\left(  t\right)  $ over $t>0,$ for each
kernel $K_{2},K_{3}$ and $K_{4}.$}%
\label{Figure2}%
\end{center}
\end{figure}
%

\begin{figure}
[ptb]
\begin{center}
\includegraphics[
height=2.3903in,
width=5.521in
]%
{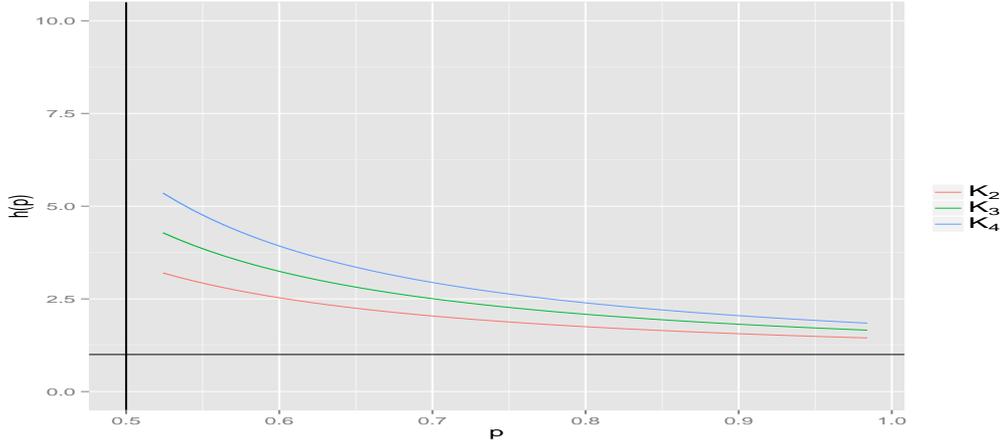}%
\caption{Plotting the function $h\left(  p\right)  $ over $\left(
1/2,1\right)  ,$ for each kernel $K_{2},K_{3}$ and $K_{4}.$}%
\label{Figure3}%
\end{center}
\end{figure}

\subsection{The optimal number of upper extremes}

For a given kernel $K,$ we seek the optimal number of upper extremes
$k_{K}^{\ast}$ that minimizes the asymptotic MSE which equals $\sigma_{K}%
^{2}/k+\left(  k/n\right)  ^{2\gamma\beta_{\ast}}m_{K}^{2}=:\mathcal{M}\left(
k\right)  .$ Explicitly we have%
\begin{align*}
&  \mathcal{M}\left(  k\right)
\begin{tabular}
[c]{l}%
$:=$%
\end{tabular}
\ \frac{\gamma_{1}^{2}}{k}\int_{0}^{1}s^{-1/p+1}K^{2}\left(  s\right)  ds\\
&  \ \ \ \ \ \ \ \ \ \ \ \ \ \ \ \ \ +\mathbf{1}\left\{  \beta_{1}\leq
\beta_{2}\right\}  \left(  k/n\right)  ^{2\gamma\beta_{1}}\beta_{1}^{2}%
D_{1}^{2}C^{-2\gamma\beta_{1}}\gamma_{1}^{4}\left\{  \int_{0}^{1}s^{\beta
_{1}\gamma_{1}}K\left(  s\right)  ds\right\}  ^{2}.
\end{align*}
Letting $\alpha:=\beta_{1}\gamma$ and $\mathcal{D}:=\mathbf{1}\left\{
\beta_{1}\leq\beta_{2}\right\}  D_{1}C^{-\alpha}p^{-1},$ we write%
\[
\gamma_{1}^{-2}\mathcal{M}\left(  k\right)  =\frac{1}{k}\int_{0}^{1}%
s^{-1/p+1}K^{2}\left(  s\right)  ds+\left(  \frac{k}{n}\right)  ^{2\alpha
}\left(  \alpha\mathcal{D}\right)  ^{2}\left\{  \int_{0}^{1}s^{\alpha
/p}K\left(  s\right)  ds\right\}  ^{2}.
\]
Using similar arguments as used to the proof of Theorem 5 in \cite{CDM85}, we
infer that $k_{K}^{\ast}$ minimizing the right-side of the previous equation
is the integer part of $n^{2\alpha/\left(  2\alpha+1\right)  }\left\{
2\alpha^{3}\mathcal{D}^{3}\right\}  ^{-1/\left(  2\alpha+1\right)  }%
\Phi\left(  K\right)  ,$ where%
\[
\Phi\left(  K\right)  :=\left\{  \int_{0}^{1}s^{-1/p+1}K^{2}\left(  s\right)
ds\right\}  ^{1/\left(  2\alpha+1\right)  }\left\{  \int_{0}^{1}s^{\alpha
/p}K\left(  s\right)  ds\right\}  ^{-2/\left(  2\alpha+1\right)  }.
\]
In particular, for the indicator kernel function $K_{1}:=\mathbf{1}\left\{
\left[  0,1\right)  \right\}  ,$ we have%
\begin{align*}
\Phi\left(  K_{1}\right)   &  =\left\{  \int_{0}^{1}s^{-1/p+1}ds\right\}
^{1/\left(  2\alpha+1\right)  }\left\{  \int_{0}^{1}s^{\alpha/p}ds\right\}
^{-2/\left(  2\alpha+1\right)  }\\
&  =\left(  \frac{p}{2p-1}\right)  ^{1/\left(  2\alpha+1\right)  }\left(
\alpha/p+1\right)  ^{2/\left(  2\alpha+1\right)  }.
\end{align*}
Thereby the optimal top $k$ observations used in Worms's estimator
$\widehat{\gamma}_{1,k}^{\left(  W\right)  }$ is
\begin{equation}
k_{W}^{\ast}=\left[  n^{2/\left(  2\alpha+1\right)  }\left(  \frac
{2p\alpha^{3}\mathcal{D}^{3}}{2p-1}\right)  ^{1/\left(  2\alpha+1\right)
}\left(  \alpha/p+1\right)  ^{2/\left(  2\alpha+1\right)  }\right]
.\label{k-W}%
\end{equation}
Thus the ratio between the two optimal number of extremes is
\begin{equation}
\frac{k_{K}^{\ast}}{k_{W}^{\ast}}\sim\frac{\left\{  \int_{0}^{1}%
s^{-1/p+1}K^{2}\left(  s\right)  ds\right\}  ^{1/\left(  2\alpha+1\right)
}\left\{  \int_{0}^{1}s^{\alpha/p}K\left(  s\right)  ds\right\}  ^{-2/\left(
2\alpha+1\right)  }}{\left(  \frac{p}{2p-1}\right)  ^{1/\left(  2\alpha
+1\right)  }\left(  \alpha/p+1\right)  ^{2/\left(  2\alpha+1\right)  }%
}.\label{ratio}%
\end{equation}
Unfortunately, the optimal choice of the number of upper order statistics to
be used in estimation depends mainly on the unknown slowly varying part of the
tail. This fact makes obtaining a practical strategy for minimizing the
asymptotic mean square error through an appropriate choice of $k$ difficult.
There are numerous heuristic methods to select the optimal number of upper
extremes used in the computation of the tail index estimate. An exhaustive
bibliography to this topic is gathered in the nice survey given by
\cite{CG-15}. Our choice fell on the method of Reiss and Thomas given in
\cite{ReTo7}, page $137.$ In this procedure one defines the optimal sample
fraction%
\[
k^{\ast}:=\arg\min_{1<k<n}\frac{1}{k}\sum_{i=1}^{k}i^{\nu}\left\vert
\widehat{\gamma}_{1,i}-\text{median}\left\{  \widehat{\gamma}_{1,1}%
,...,\widehat{\gamma}_{1,k}\right\}  \right\vert ,
\]
with suitable constant $0\leq\nu\leq1/2,$ where $\widehat{\gamma}_{1,i}$
corresponds to the kernel estimator of tail index $\gamma_{1},$ based on the
$i$ upper order statistics, of a Pareto-type model. We claim, in our
simulation study below, that $\nu=0.3$ provides better results both in terms
of bias and MSE. This agrees with that was found by \cite{NA2004} when
considering Hill's estimator in the non-truncation case.\ We will use this
\ procedure to select $k^{\ast}$ the optimal numbers of upper order statistics
used in the computation of the all aforementioned estimators.

\section{\textbf{Simulation study\label{sec4}}}

\noindent In this section we will perform a simulation study in order to
compare the finite sample behavior of the kernel estimator $\widehat{\gamma
}_{1,k},$ given in $\left(  \ref{gtild}\right)  ,$ with the three estimators
$\widehat{\gamma}_{1,k}^{\left(  EFG\right)  },$ $\widehat{\gamma}%
_{1,k}^{\left(  W\right)  }$ and $\widehat{\gamma}_{1,k}^{\left(  BAB\right)
}\left(  \mathcal{K}\right)  $ stated respectively in $\left(  \ref{AH}%
\right)  , $ $\left(  \ref{WW}\right)  $ and $\left(  \ref{BAB}\right)  .$ We
constructed the two estimator $\widehat{\gamma}_{1,k}$ and $\widehat{\gamma
}_{1,k}^{\left(  BAB\right)  }\left(  \mathcal{K}\right)  $ by selecting the
triweight kernel function $K_{3}$ (defined in $\left(  \ref{Kfun}\right)  ),$
and $\mathcal{K}_{2}$ (given in in $\left(  \ref{Kbis}\right)  )$
respectively. For the censoring and censored distributions functions $F$ and
$G,$ will be chosen among the following two models:

\begin{itemize}
\item Burr $\left(  \zeta,\gamma\right)  $ distribution with right-tail
function:%
\[
\overline{\mathcal{L}}\left(  x\right)  =\left(  1+x^{1/\zeta}\right)
^{-\zeta/\gamma},\text{ }x\geq0,\text{ }\zeta>0,\text{ }\gamma>0.
\]

\item Fr\'{e}chet $\left(  \gamma\right)  $ distribution with right-tail
function:%
\[
\overline{\mathcal{L}}\left(  x\right)  =1-\exp\left(  -x^{-1/\gamma}\right)
,\text{ }x>0,\text{ }\gamma>0.
\]

\end{itemize}

\noindent For each given distribution, we generate $2000$ random samples of
length $n$ $=$ $500$ and plot the four estimators, their corresponding biases
and MSE's as function of $k=1,...,500.$ We consider four scenarios, namely: a
Burr distribution censored by another Burr distribution (Figure $\ref{Figure4}%
$) a Fr\'{e}chet distribution censored by another Fr\'{e}chet distribution
(Figure $\ref{Figure5}$), a Burr distribution censored by a Fr\'{e}chet
distribution (Figure $\ref{Figure6}$) and a Fr\'{e}chet distribution censored
by a Burr distribution (Figure $\ref{Figure7}$). In each scenario, we
considered the two censoring schemes, that is the weak censoring $\left(
p>1/2\right)  $ and the strong censoring $\left(  p<1/2\right)  .$ The
parametrization of Fr\'{e}chet and Burr models is made so that it covers both
the two situations $p>1/2$ and $p<1/2.$ In right panels of the four Figures
$\ref{Figure4}$-$\ref{Figure7},$ the simulation study shows that both kernel
estimators $\widehat{\gamma}_{1,k}$ and $\widehat{\gamma}_{1,k}^{\left(
BAB\right)  }$ present a smoothness contrary to both estimators $\widehat
{\gamma}_{1,k}^{\left(  W\right)  }$ and $\widehat{\gamma}_{1,k}^{\left(
EFG\right)  },$ which behave erratically a long the range of the largest
extreme values $k.$ In addition, the two kernel estimators exhibit a stability
and alignment with respect to the true value of the tail index $\gamma_{1}$
over almost the interval. We also point out that, in terms of stability,
$\widehat{\gamma}_{1,k}$ performs better than $\widehat{\gamma}_{1,k}^{\left(
BAB\right)  }$ in the strong censoring case. We notice that $\widehat{\gamma
}_{1,k}^{\left(  W\right)  }$ meets the true value of the tail index in a
single point $k^{\ast}\in\left\{  1,...,500\right\}  $ while $\widehat{\gamma
}_{1,k}^{\left(  EFG\right)  }$ does not cross the line $\gamma_{1}$ at any
point, but approaches slightly this one on a small interval of $k.$ From the
middle panels (resp. the right panels), in overall, $\widehat{\gamma}_{1,k}$
performs better than the three other estimators in terms of bias (resp. MSE)
for the strong censoring case $\left(  p>1/2\right)  ,$ however $\widehat
{\gamma}_{1,k}^{\left(  W\right)  }$ seems to be slightly better than
$\widehat{\gamma}_{1,k}$ for the weak censoring $\left(  p<1/2\right)  $ one.%

\begin{figure}
[ptbh]
\begin{center}
\includegraphics[
height=3.0113in,
width=5.521in
]%
{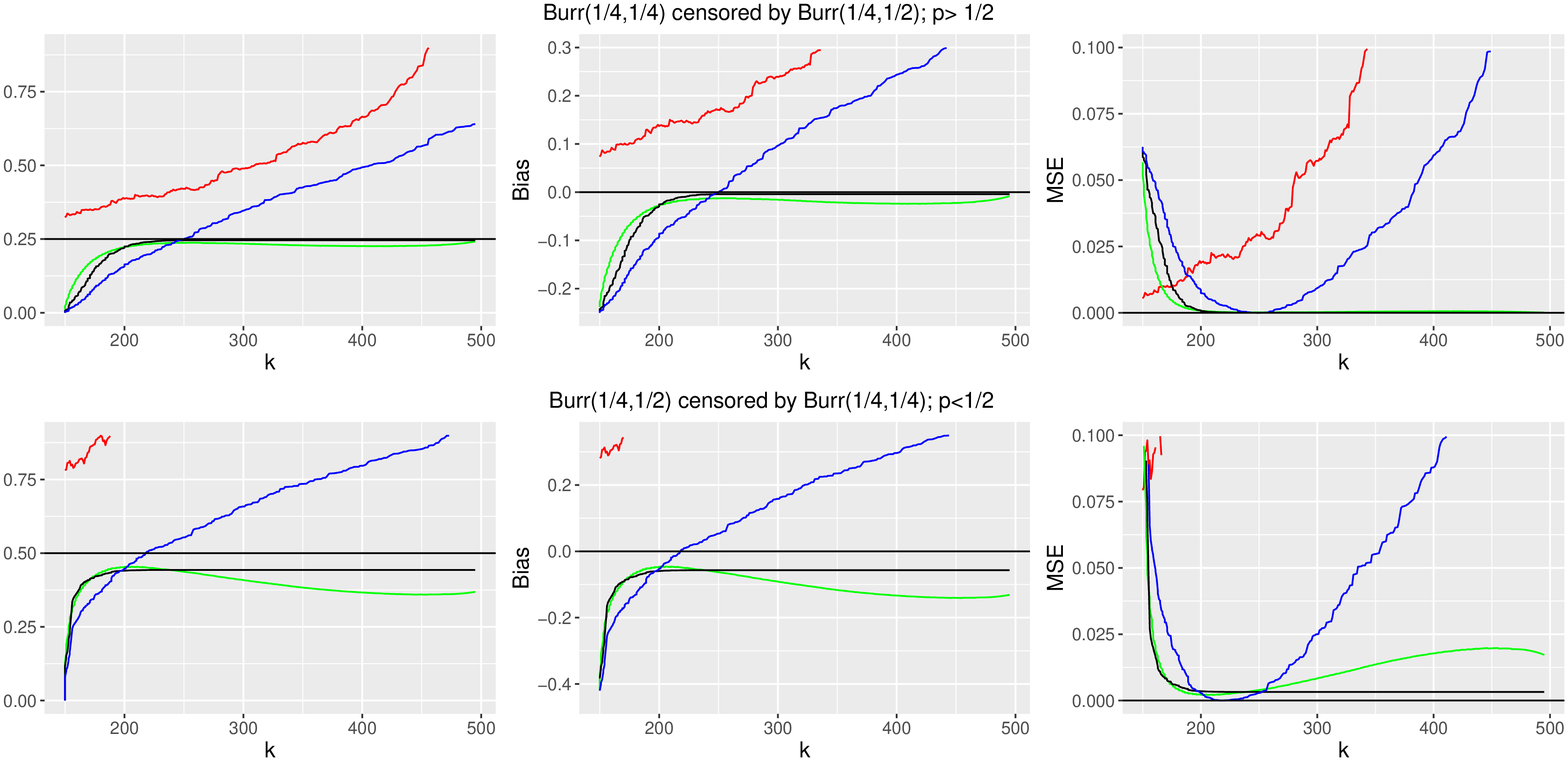}%
\caption{Comparison of the estimators (left-panels) $\widehat{\gamma}%
_{1,k}^{(EFG)}\left(  \text{red line}\right)  ,$ $\widehat{\gamma}_{1,k}%
^{(W)}\left(  \text{blue line}\right)  ,$ $\widehat{\gamma}_{1,k}%
^{(BAB)}\left(  \text{green line}\right)  ,$ $\widehat{\gamma}_{1,k}$ $\left(
\text{black line}\right)  $ their biases (middel-panels) and MSE's
(right-panels) for a Burr distribution censored by another Burr distribution
with $p=2/3$ (top-panels) and $p=1/3$ (bottom-panels)}%
\label{Figure4}%
\end{center}
\end{figure}
\begin{figure}
[ptbh]
\begin{center}
\includegraphics[
height=3.039in,
width=5.5486in
]%
{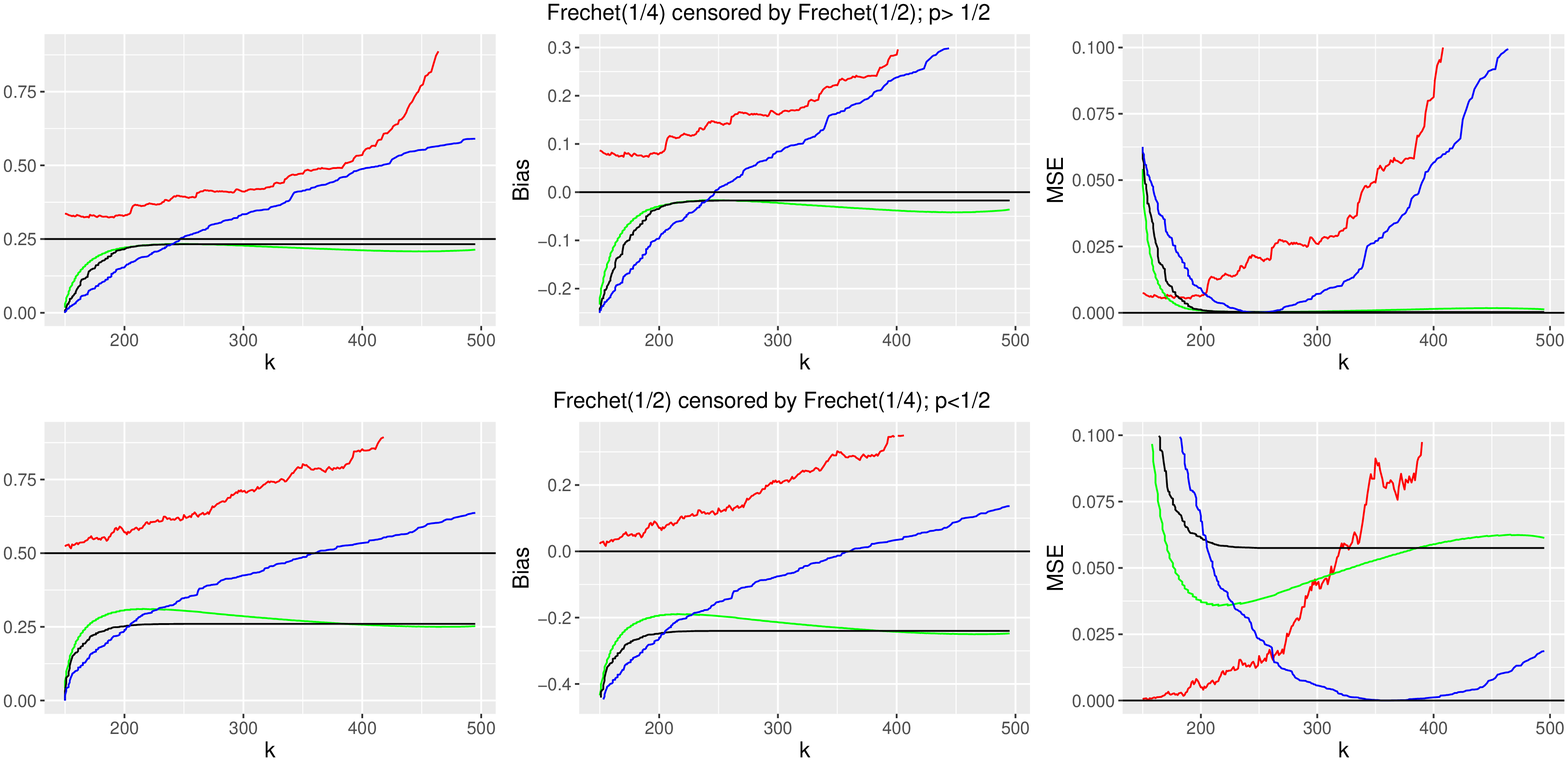}%
\caption{Comparison of the estimators (left-panels) $\widehat{\gamma}%
_{1,k}^{(EFG)}\left(  \text{red line}\right)  ,$ $\widehat{\gamma}_{1,k}%
^{(W)}\left(  \text{blue line}\right)  ,$ $\widehat{\gamma}_{1,k}%
^{(BAB)}\left(  \text{green line}\right)  ,$ $\widehat{\gamma}_{1,k}$ $\left(
\text{black line}\right)  $ their biases (middel-panels) and MSE's
(right-panels) for a Fr\'{e}chet distribution censored by another Fr\'{e}chet
distribution with $p=2/3$ (top-panels) and $p=1/3$ (bottom-panels)}%
\label{Figure5}%
\end{center}
\end{figure}
%

\begin{figure}
[ptbh]
\begin{center}
\includegraphics[
height=3.0113in,
width=5.521in
]%
{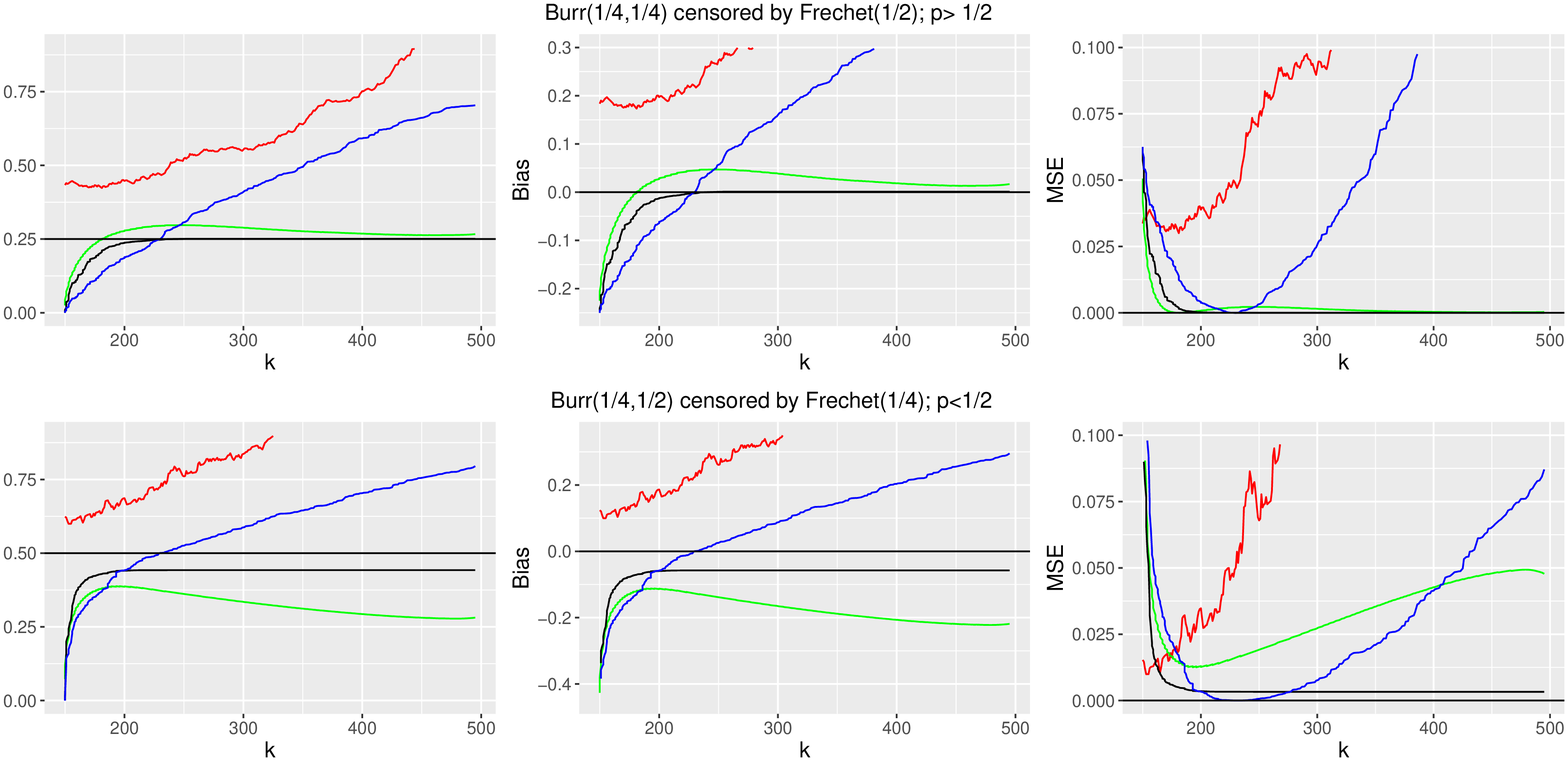}%
\caption{Comparison of the estimators (left-panels) $\widehat{\gamma}%
_{1,k}^{(EFG)}\left(  \text{red line}\right)  ,$ $\widehat{\gamma}_{1,k}%
^{(W)}\left(  \text{blue line}\right)  ,$ $\widehat{\gamma}_{1,k}%
^{(BAB)}\left(  \text{green line}\right)  ,$ $\widehat{\gamma}_{1,k}$ $\left(
\text{black line}\right)  $ their biases (middel-panels) and MSE's
(right-panels) for a Burr distribution censored by Fr\'{e}chet distribution
with $p=2/3$ (top-panels) and $p=1/3$ (bottom-panels)}%
\label{Figure6}%
\end{center}
\end{figure}
%

\begin{figure}
[ptbh]
\begin{center}
\includegraphics[
height=3.0113in,
width=5.521in
]%
{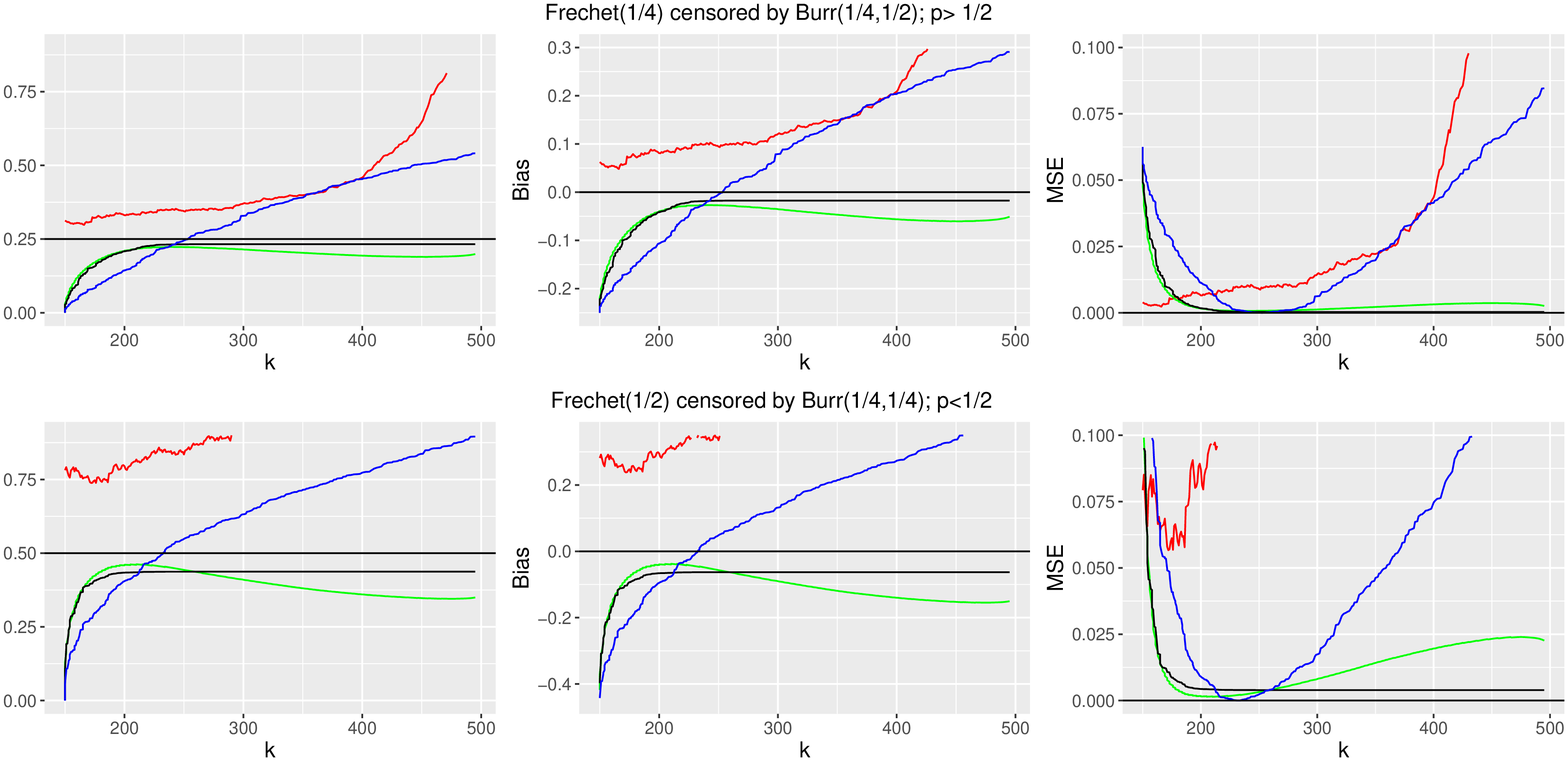}%
\caption{Comparison of the estimators (left-panels) $\widehat{\gamma}%
_{1,k}^{(EFG)}\left(  \text{red line}\right)  ,$ $\widehat{\gamma}_{1,k}%
^{(W)}\left(  \text{blue line}\right)  ,$ $\widehat{\gamma}_{1,k}%
^{(BAB)}\left(  \text{green line}\right)  ,$ $\widehat{\gamma}_{1,k}$ $\left(
\text{black line}\right)  $ their biases (middel-panels) and MSE's
(right-panels) for a Fr\'{e}chet distribution censored by Burr distribution
with $p=2/3$ (top-panels) and $p=1/3$ (bottom-panels)}%
\label{Figure7}%
\end{center}
\end{figure}

\section{\textbf{Proofs\label{sec5}}}

\subsection{Proof of Theorem $\ref{Theorem1}$}

We will adapt the proof of Theorem 1 in \cite{BWW2019} to the framework of the
kernel estimation. To begin, let us define the following quantities%
\[
\widehat{RF}_{j}:=\frac{\overline{F}_{n}^{KM}\left(  Z_{n-j+1:n}\right)
}{\overline{F}_{n}^{KM}\left(  Z_{n-k:n}\right)  },\text{ }RF_{j}%
:=\frac{\overline{F}\left(  Z_{n-j+1:n}\right)  }{\overline{F}\left(
Z_{n-k:n}\right)  },
\]%
\[
\widehat{RF}_{j}\left(  K\right)  :=\frac{\overline{F}_{n}^{KM}\left(
Z_{n-j+1:n}\right)  }{\overline{F}_{n}^{KM}\left(  Z_{n-k:n}\right)  }K\left(
\frac{\overline{F}_{n}^{KM}\left(  Z_{n-j+1:n}\right)  }{\overline{F}_{n}%
^{KM}\left(  Z_{n-k:n}\right)  }\right)  ,\text{ }%
\]
and%
\[
RF_{j}\left(  K\right)  :=\frac{\overline{F}\left(  Z_{n-j+1:n}\right)
}{\overline{F}\left(  Z_{n-k:n}\right)  }K\left(  \frac{\overline{F}\left(
Z_{n-j+1:n}\right)  }{\overline{F}\left(  Z_{n-k:n}\right)  }\right)  .
\]
For further use, we set
\[
\xi_{j}:=j\log\frac{Z_{n-j+1:n}}{Z_{n-j:n}}\text{ and }E_{j}^{\left(
n\right)  }:=j\log\frac{Y_{n-j+1:n}}{Y_{n-j:n}},\text{ for }j=1,...,k,
\]
where $Y_{1:n}\leq...\leq Y_{n:n}$ be the order statistics pertaining to the
sample $\left(  Y_{j}\right)  _{1\leq j\leq n}$ of iid standard Pareto rv's
defined by $Z_{j}=U_{H}\left(  Y_{j}\right)  ,$ with
\begin{equation}
U_{H}\left(  s\right)  :=\inf\left\{  x,\text{ }H\left(  x\right)
\geq1-1/s\right\}  ,\text{ }s>1, \label{UH1}%
\end{equation}
stands for the quantile function pertaining to cdf $H.$ Thanks to $\left(
\ref{H}\right)  ,$ we have%
\begin{equation}
U_{H}\left(  s\right)  =C^{\gamma}s^{\gamma}\left(  1+\gamma D_{\ast}%
C^{-\beta_{\ast}\gamma}s^{-\beta_{\ast}\gamma}\left(  1+o\left(  1\right)
\right)  \right)  ,\text{ as }s\rightarrow\infty. \label{UH}%
\end{equation}
Since $\left\{  \log Y_{j}\right\}  _{1\leq j\leq k}$ are iid standard
exponential rv's, then the normalized spacings $\left\{  E_{j}^{\left(
n\right)  }\right\}  _{1\leq j\leq k}$ are iid standard exponential rv's too;
see for instance Theorem 4.6.1 in \cite{Arnorld08}. The following
approximation, given by \cite{BDG2002}, will be one of the basic keys of the
proof:
\begin{equation}
\xi_{j}=\xi_{j}^{\prime}+R_{j,n}\text{ where }\xi_{j}^{\prime}:=\left(
\gamma+u_{j,k}^{\gamma\beta_{\ast}}b_{n,k}\right)  E_{j}^{\left(  n\right)  },
\label{ksiprime}%
\end{equation}
with $u_{j,k}:=\frac{j}{k+1}$ and $b_{n,k}=-\left(  1+o_{\mathbf{P}}\left(
1\right)  \right)  \gamma^{2}\beta_{\ast}D_{\ast}C^{-\gamma\beta_{\ast}%
}\left(  \frac{k+1}{n+1}\right)  ^{\gamma\beta_{\ast}},$ as $n\rightarrow
\infty.$ The remainder term $R_{j,n}$ is described in Theorem 2.1 of the
aforementioned paper, which satisfies $\left\vert \sum_{i=j}^{k}i^{-1}%
R_{i,n}\right\vert =o_{\mathbf{P}}\left(  b_{n,k}\log\left(  \max\left(
u_{j,k}^{-1},1\right)  \right)  \right)  .$ Next we show that, in the same
probability space $\left(  \Omega,\mathcal{A},\mathbf{P}\right)  ,$ there
exists a sequence of iid standard uniform rv's $\left(  U_{i}\right)  _{1\leq
i\leq k}$ independent to $\left(  E_{i}^{\left(  n\right)  }\right)  _{1\leq
i\leq k},$ such that%
\begin{align}
&  \sqrt{k}\left(  \widehat{\gamma}_{1,k}-\gamma_{1}\right)  -m_{K}\left\{
\sqrt{k}\left(  k/n\right)  ^{\gamma\beta_{\ast}}\right\} \nonumber\\
&  =\gamma\frac{\sqrt{k}}{k+1}\sum_{i=2}^{k}\left\{  u_{i,k}^{-1}\sum
_{j=2}^{i}u_{j,k}^{p-1}g_{K}^{\prime}\left(  u_{j,k}^{p}\right)  \right\}
\mathcal{A}_{i,n}+o_{\mathbf{P}}\left(  1\right)  , \label{final-app}%
\end{align}
where $g_{K}\left(  s\right)  =sK\left(  s\right)  ,$ $\mathcal{A}%
_{i,n}:=p\left(  E_{i}^{\left(  n\right)  }-1\right)  -\left(  \mathbf{1}%
\left\{  U_{i}\leq p\right\}  -p\right)  $ and $m_{K}$ is as in $\left(
\ref{mk}\right)  .$ To this end, let us rewrite formula $\left(
\ref{KF}\right)  $ into
\[
\widehat{\gamma}_{1,k}=\sum_{j=2}^{k}\frac{\overline{F}_{n}^{KM}\left(
Z_{n-j+1:n}\right)  }{\overline{F}_{n}^{KM}\left(  Z_{n-k:n}\right)  }K\left(
\frac{\overline{F}_{n}^{KM}\left(  Z_{n-j+1:n}\right)  }{\overline{F}_{n}%
^{KM}\left(  Z_{n-k:n}\right)  }\right)  \left\{  \log\frac{Z_{n-j+1:n}%
}{Z_{n-k:n}}-\log\frac{Z_{n-j:n}}{Z_{n-k:n}}\right\}  .
\]
It is easy to verify that $\widehat{\gamma}_{1,k}-\gamma_{1}$ may be
decomposed into the sum of%
\[
T_{k,n}^{\left(  1\right)  }:=\sum_{j=2}^{k}\left(  \widehat{RF}_{j}\left(
K\right)  -RF_{j}\left(  K\right)  \right)  \frac{\xi_{j}^{\prime}}{j},\text{
}T_{k,n}^{\left(  2\right)  }:=\sum_{j=2}^{k}RF_{j}\left(  K\right)  \frac
{\xi_{j}^{\prime}}{j}-\frac{\gamma}{k+1}\sum_{j=2}^{k}K\left(  u_{j,k}%
^{p}\right)  ,
\]%
\[
T_{k,n}^{\left(  3\right)  }:=\frac{\gamma}{k+1}\sum_{j=2}^{k}K\left(
u_{j,k}^{p}\right)  -\gamma_{1}\text{ and }T_{k,n}^{\left(  4\right)  }%
:=\sum_{j=2}^{k}\widehat{RF}_{j}\left(  K\right)  \frac{R_{j,n}}{j}.
\]
It is worth mentioning that, by considering the indicator kernel function
$K_{1},$ the last three (remainder) terms $T_{k,n}^{\left(  i\right)  },$
$i=2,3,4$ coincide with those stated in the beginning of the proof of Theorem
1 in \cite{BWW2019}. The authors showed that these terms, times $\sqrt{k},$
tend to zero in probability as $n\rightarrow\infty.$ By deep reading the
proof, we came to the conclusion that by using the assumption $\left[
A4\right]  $ on kernel $K$ we end up with $\sqrt{k}T_{k,n}^{\left(  i\right)
}=o_{\mathbf{P}}\left(  1\right)  ,$ $i=2,3,4$ as $n\rightarrow\infty,$ as
well, that we omit details. This means that $T_{k,n}^{\left(  1\right)  }$ is
the only term that contributes to the asymptotic normality of $\widehat
{\gamma}_{1,k}.$ Indeed, using Taylor's expansion of second-order to this one
yields%
\begin{align*}
T_{k,n}^{\left(  1\right)  }  &  =\sum_{j=2}^{k}\left(  \widehat{RF}%
_{j}-RF_{j}\right)  g_{K}^{\prime}\left(  RF_{j}\right)  \frac{\xi_{j}%
^{\prime}}{j}+\frac{1}{2}\sum_{j=2}^{k}\left(  \widehat{RF}_{j}-RF_{j}\right)
^{2}g_{K}^{\prime\prime}\left(  \widetilde{RF}_{j}\right)  \frac{\xi
_{j}^{\prime}}{j}\\
&  =\mathcal{T}_{k,n}^{\left(  1\right)  }+\mathcal{R}_{k,n}^{\left(
1\right)  },
\end{align*}
where $\widetilde{RF}_{j}$ is a rv between $\widehat{RF}_{j}$ and $RF_{j}.$
From assumption $\left[  A4\right]  ,$ the function $g_{K}^{\prime\prime}$ is
bounded, then using similar arguments of the proof in A.3.2 given in
\cite{BWW2019}, we show that $\sqrt{k}\mathcal{R}_{k,n}^{\left(  1\right)
}=o_{\mathbf{P}}\left(  1\right)  ,$ as $n\rightarrow\infty.$ Let us now focus
on the term $\mathcal{T}_{k,n}^{\left(  1\right)  }$ which may be made into
the sum of%
\[
\mathcal{T}_{k,n}^{\left(  1,1\right)  }:=\sum_{j=2}^{k}\left\{  \log
\frac{\widehat{RF}_{j}}{RF_{j}}\right\}  RF_{j}g_{K}^{\prime}\left(
RF_{j}\right)  \frac{\xi_{j}^{\prime}}{j}%
\]
and%
\[
\mathcal{T}_{k,n}^{\left(  1,2\right)  }:=\sum_{j=2}^{k}\left\{  -\log
\frac{\widehat{RF}_{j}}{RF_{j}}+\left(  \frac{\widehat{RF}_{j}}{RF_{j}%
}-1\right)  \right\}  RF_{j}g_{K}^{\prime}\left(  RF_{j}\right)  \frac{\xi
_{j}^{\prime}}{j}.
\]
It easy to check that $\log\widehat{RF}_{j}=\sum_{i=j}^{k}\delta_{\left(
n-i+1\right)  }\log\frac{i-1}{i},$ therefore%
\[
\mathcal{T}_{k,n}^{\left(  1,1\right)  }=\sum_{j=2}^{k}\left(  \sum_{i=j}%
^{k}\delta_{\left(  n-i+1\right)  }\log\frac{i-1}{i}-\log RF_{j}\right)
RF_{j}g_{K}^{\prime}\left(  RF_{j}\right)  \frac{\xi_{j}^{\prime}}{j}.
\]
Note that $\sum_{i=j}^{k}\xi_{i}/i=\log\left(  Z_{n-j+1:n}/Z_{n-k:n}\right)  $
and%
\[
\log RF_{j}=\frac{-1}{\gamma_{1}}\sum_{i=j}^{k}\frac{\xi_{i}}{i}+\left(  \log
RF_{j}+\frac{1}{\gamma_{1}}\log\frac{Z_{n-j+1:n}}{Z_{n-k:n}}\right)  ,
\]
\ it follows that%
\begin{align*}
\mathcal{T}_{k,n}^{\left(  1,1\right)  }  &  =\sum_{j=2}^{k}\left(  \frac
{1}{\gamma_{1}}\sum_{i=j}^{k}\frac{\xi_{i}}{i}+\sum_{i=j}^{k}\delta_{\left(
n-i+1\right)  }\log\frac{i-1}{i}\right)  RF_{j}g_{K}^{\prime}\left(
RF_{j}\right)  \frac{\xi_{j}^{\prime}}{j}\\
&  \ \ \ -\sum_{j=2}^{k}\left\{  \log RF_{j}+\frac{1}{\gamma_{1}}\log
\frac{Z_{n-j+1:n}}{Z_{n-k:n}}\right\}  RF_{j}g_{K}^{\prime}\left(
RF_{j}\right)  \frac{\xi_{j}^{\prime}}{j}.
\end{align*}
Observe that the first term equals%
\[
\sum_{j=2}^{k}\sum_{i=j}^{k}\left(  \frac{1}{\gamma_{1}}\frac{\xi_{i}}%
{i}+\delta_{\left(  n-i+1\right)  }\log\frac{i-1}{i}\right)  RF_{j}%
g_{K}^{\prime}\left(  RF_{j}\right)  \frac{\xi_{j}^{\prime}}{j},
\]
which, by inverting the sums, becomes $\sum_{i=2}^{k}\left(  \frac{1}%
{\gamma_{1}}\xi_{i}+\delta_{\left(  n-i+1\right)  }i\log\frac{i-1}{i}\right)
S_{i,k},$ \ where $S_{i,k}:=\frac{1}{i}\sum_{j=2}^{i}RF_{j}g_{K}^{\prime
}\left(  RF_{j}\right)  \frac{\xi_{j}^{\prime}}{j},$ $j=2,...,k.$ Thereby
$\mathcal{T}_{k,n}^{\left(  1,1\right)  }$ may be rewritten into%
\[%
\begin{array}
[c]{cc}%
{\displaystyle\sum\limits_{i=2}^{k}}
\left(  \dfrac{1}{\gamma_{1}}\left(  \xi_{i}-\gamma\right)  +\delta_{\left(
n-i+1\right)  }i\log\dfrac{i-1}{i}+p\right)  S_{i,k}\medskip & \\
-%
{\displaystyle\sum\limits_{j=2}^{k}}
\left(  \log RF_{j}+\dfrac{1}{\gamma_{1}}\log\dfrac{Z_{n-j+1:n}}{Z_{n-k:n}%
}\right)  RF_{j}g_{K}^{\prime}\left(  RF_{j}\right)  \dfrac{\xi_{j}^{\prime}%
}{j} & =\mathcal{T}_{k,n}^{\left(  1,1,1\right)  }-\mathcal{T}_{k,n}^{\left(
1,1,2\right)  }.
\end{array}
\]
We need to the following additional notations:%
\[
c_{i}:=1+i\log\frac{i-1}{i},\text{ }A_{i,n}:=p\left(  E_{i}^{\left(  n\right)
}-1\right)  -\left(  \delta_{\left(  n-i+1\right)  }-p\right)  \text{ and
}B_{i,n}:=\frac{1}{\gamma_{1}}b_{n,k}u_{i,k}^{\beta_{\ast}\gamma}%
E_{i}^{\left(  n\right)  }.
\]
By adding $\delta_{\left(  n-i+1\right)  }$ and subtracting it, then by using
the approximation $\left(  \ref{ksiprime}\right)  ,$ we rewrite $\mathcal{T}%
_{k,n}^{\left(  1,1,1\right)  }$ into
\[
\mathcal{T}_{k,n}^{\left(  1,1,1\right)  }=\sum_{i=2}^{k}A_{i,n}S_{i,k}%
+\sum_{i=2}^{k}B_{i,n}S_{i,k}+\sum_{i=2}^{k}\delta_{\left(  n-i+1\right)
}c_{i}S_{i,k}+\gamma_{1}^{-1}\sum_{i=2}^{k}R_{i,n}S_{i,k}.
\]
Once again, using assumption $\left[  A4\right]  $ and Proposition (parts c
and d) in \cite{BWW2019}, we infer that $\sqrt{k}\sum_{i=2}^{k}\delta_{\left(
n-i+1\right)  }c_{i}S_{i,k}=o_{\mathbf{P}}\left(  1\right)  =\sqrt{k}%
\sum_{i=2}^{k}R_{i,n}S_{i,k},$ as $n\rightarrow\infty.$ Next we show that%
\begin{equation}
\sqrt{k}\sum_{i=2}^{k}A_{i,n}S_{i,k}=\mathcal{N}\left(  0,\gamma_{1}^{2}%
\int_{0}^{1}t^{-1/p+1}K^{2}\left(  t\right)  dt\right)  +\mathcal{B}%
_{1,k}+o_{\mathbf{p}}\left(  1\right)  , \label{clt}%
\end{equation}
and $\sqrt{k}\sum_{i=2}^{k}B_{i,n}S_{i,k}=\mathcal{B}_{2,k}+o_{\mathbf{P}%
}\left(  1\right)  ,$ where $\mathcal{B}_{1,k}$ and $\mathcal{B}_{2,k}$ are
asymptotic two biases that we precise later on. Indeed, let us decompose
$\sum_{i=2}^{k}A_{i,n}S_{i,k}$ into the sum of%
\[
\mathcal{I}_{n1}:=\sum_{i=2}^{k}A_{i,n}\left\{  \frac{1}{i}\sum_{j=2}%
^{i}u_{j,k}^{p}g_{K}^{\prime}\left(  u_{j,k}^{p}\right)  \frac{\xi_{j}%
^{\prime}}{j}\right\}  ,
\]%
\[
\mathcal{I}_{n2}:=\sum_{i=2}^{k}A_{i,n}\left\{  \frac{1}{i}\sum_{j=2}%
^{i}\left\{  V_{j,k}^{p}g_{K}^{\prime}\left(  V_{j,k}^{p}\right)  -u_{j,k}%
^{p}g_{K}^{\prime}\left(  u_{j,k}^{p}\right)  \right\}  \frac{\xi_{j}^{\prime
}}{j}\right\}
\]
and%
\[
\mathcal{I}_{n3}:=\sum_{i=2}^{k}A_{i,n}\left\{  \frac{1}{i}\sum_{j=2}%
^{i}V_{j,k}^{p}g_{K}^{\prime}\left(  V_{j,k}^{p}\right)  C_{j,k,0}\frac
{\xi_{j}^{\prime}}{j}\right\}  ,
\]
where $C_{j,k,0}$ is a sequence of constants defined in assertion $\left(
33\right)  $ in \cite{BWW2019}. By means of Taylor's expansion to function
$t\rightarrow tg_{K}^{\prime}\left(  t\right)  ,$ with assumption $\left[
A4\right]  ,$ and similar arguments as used to terms $I_{n2}$ and $I_{n3}$ in
\cite{BWW2019}, we infer that $\sqrt{k}\mathcal{I}_{n2}=o_{\mathbf{P}}\left(
1\right)  =\sqrt{k}\mathcal{I}_{n3}.$ Recall, from representation $\left(
\ref{ksiprime}\right)  ,$ that $\xi_{j}^{\prime}$ may be rewritten into
$\gamma+\gamma\left(  E_{j}^{\left(  n\right)  }-1\right)  +b_{n,k}%
u_{j,k}^{\gamma\beta_{\ast}}E_{j}^{\left(  n\right)  },$ this allows us to
decompose the first term $\mathcal{I}_{n1}$ into the sum of
\[
\mathcal{I}_{n1}^{\left(  1\right)  }:=\gamma\sum_{i=2}^{k}A_{i,n}\left\{
\frac{1}{i}\sum_{j=2}^{i}u_{j,k}^{p}g_{K}^{\prime}\left(  u_{j,k}^{p}\right)
\frac{1}{j}\right\}  .
\]%
\[
\mathcal{I}_{n1}^{\left(  2\right)  }:=\gamma\sum_{i=2}^{k}A_{i,n}\left\{
\frac{1}{i}\sum_{j=2}^{i}u_{j,k}^{p}g_{K}^{\prime}\left(  u_{j,k}^{p}\right)
\frac{E_{j}^{\left(  n\right)  }-1}{j}\right\}
\]
and%
\[
\mathcal{I}_{n1}^{\left(  3\right)  }:=b_{n,k}\sum_{i=2}^{k}A_{i,n}\left\{
\frac{1}{i}\sum_{j=2}^{i}u_{j,k}^{p}g_{K}^{\prime}\left(  u_{j,k}^{p}\right)
\frac{u_{j,k}^{\gamma\beta_{\ast}}}{j}E_{j}^{\left(  n\right)  }\right\}  .
\]
Since $g_{K}^{\prime}$ is bounded, then using similar arguments as used to the
terms $I_{1,n}^{\left(  2\right)  }$ and $I_{1,n}^{\left(  3\right)  }$ in
\cite{BWW2019}, we show that $\sqrt{k}\mathcal{I}_{n1}^{\left(  2\right)
}=o_{\mathbf{P}}\left(  1\right)  =\sqrt{k}\mathcal{I}_{n1}^{\left(  3\right)
}$ as well, that we omit the details. Let us now focus on the first term
\[
\mathcal{I}_{n1}^{\left(  1\right)  }=\frac{\gamma}{k+1}\sum_{i=2}^{k}\left\{
\frac{1}{i}\sum_{j=2}^{i}u_{j,k}^{p-1}g_{K}^{\prime}\left(  u_{j,k}%
^{p}\right)  \right\}  \left\{  p\left(  E_{i}^{\left(  n\right)  }-1\right)
-\left(  \delta_{\left(  n-i+1\right)  }-p\right)  \right\}  .
\]
\cite{EFG2008} showed that the sequence of rv's $\delta_{\left(  i\right)  }$
may be approximated by iid Bernoulli rv's $\mathbf{1}\left\{  U_{i}\leq
p\right\}  ,$ where $U_{i}$ is a sequence of standard uniform rv's which are
independent to $E_{i}^{\left(  n\right)  }.$ Moreover the authors claim that
\[
\delta_{\left(  i\right)  }=\mathbf{1}\left\{  U_{i}\leq p\left(
Z_{i:n}\right)  \right\}  =\mathbf{1}\left\{  U_{i}\leq p\circ U_{H}\left(
Y_{i:n}\right)  \right\}  ,
\]
where $p\left(  \cdot\right)  $ is the function defined in Section
$\ref{sec2}.$ In order to use this approximation, let us decompose
$\mathcal{I}_{n1}^{\left(  1\right)  }$ into the sum of%
\[
\mathcal{L}_{n1}^{\left(  1\right)  }:=\gamma\frac{\sqrt{k}}{k+1}\sum
_{i=2}^{k}\left\{  \frac{1}{i}\sum_{j=2}^{i}u_{j,k}^{p-1}g_{K}^{\prime}\left(
u_{j,k}^{p}\right)  \right\}  \left(  p\left(  E_{i}^{\left(  n\right)
}-1\right)  -\left(  \mathbf{1}\left\{  U_{n-i+1}\leq p\right\}  -p\right)
\right)  ,
\]%
\begin{align*}
&  \mathcal{L}_{n1}^{\left(  2\right)  }%
\begin{array}
[c]{c}%
:=
\end{array}
-\gamma\frac{\sqrt{k}}{k+1}\sum_{i=2}^{k}\left\{  \frac{1}{i}\sum_{j=2}%
^{i}u_{j,k}^{p-1}g_{K}^{\prime}\left(  u_{j,k}^{p}\right)  \right\} \\
&  \ \ \ \ \ \ \ \ \ \ \ \ \ \ \ \ \ \times\left(  \mathbf{1}\left\{
U_{n-i+1}\leq p\circ U_{H}\left(  n/i\right)  \right\}  -\mathbf{1}\left\{
U_{n-i+1}\leq p\right\}  \right)
\end{align*}
and%
\begin{align*}
&  \mathcal{L}_{n1}^{\left(  3\right)  }%
\begin{array}
[c]{c}%
:=
\end{array}
\gamma\frac{\sqrt{k}}{k+1}\sum_{i=2}^{k}\left\{  \frac{1}{i}\sum_{j=2}%
^{i}u_{j,k}^{p-1}g_{K}^{\prime}\left(  u_{j,k}^{p}\right)  \right\} \\
&  \ \ \ \ \ \ \ \ \ \ \ \ \ \ \times\left(  \mathbf{1}\left\{  U_{n-i+1}\leq
p\circ U_{H}\left(  n/i\right)  \right\}  -\mathbf{1}\left\{  U_{n-i+1}\leq
p\circ U_{H}\left(  Y_{n-i+1:n}\right)  \right\}  \right)  .
\end{align*}
Note that the symbol $f_{1}\circ f_{2}$ sands for the composition of two
functions $f_{1}$ and $f_{2}.$ Note that $\left(  U_{n-i+1}\right)  _{1\leq
i\leq n}\overset{\mathcal{D}}{=}\left(  U_{i}\right)  _{1\leq i\leq n}$ and
$\left(  E_{i}^{\left(  n\right)  }\right)  _{1\leq i\leq k}\overset
{\mathcal{D}}{=}\left(  E_{i}\right)  _{1\leq i\leq k},$ where $\left(
E_{i}^{\left(  n\right)  }\right)  _{1\leq i\leq k}$ is a sequence of iid
standard exponential rv's, then without loss of generality we may write
\[
\mathcal{L}_{n1}^{\left(  1\right)  }=\gamma\frac{\sqrt{k}}{k+1}\sum_{i=2}%
^{k}\left\{  \frac{1}{i}\sum_{j=2}^{i}u_{j,k}^{p-1}g_{K}^{\prime}\left(
u_{j,k}^{p}\right)  \right\}  \left(  p\left(  E_{i}-1\right)  -\left(
\mathbf{1}\left\{  U_{i}\leq p\right\}  -p\right)  \right)  .
\]
Observe that this last may be decomposed into the sum of
\[
\mathcal{L}_{n1}^{\left(  1,1\right)  }:=\gamma\frac{\sqrt{k}}{k+1}\sum
_{i=2}^{k}d_{i,k}\left(  g_{K}\right)  \left(  p\left(  E_{i}-1\right)
-\left(  \mathbf{1}\left\{  U_{i}\leq p\right\}  -p\right)  \right)  ,
\]
and%
\[
\mathcal{L}_{n1}^{\left(  1,2\right)  }:=\gamma\frac{\sqrt{k}}{k+1}\sum
_{i=2}^{k}\left\{  u_{i,k}^{-1}\int_{0}^{u_{i,k}}s^{p-1}g_{K}^{\prime}\left(
s^{p}\right)  ds\right\}  \left(  p\left(  E_{i}-1\right)  -\left(
\mathbf{1}\left\{  U_{i}\leq p\right\}  -p\right)  \right)  ,
\]
where $d_{i,k}\left(  g_{K}\right)  :=\frac{1}{i}\sum_{j=2}^{i}u_{j,k}%
^{p-1}g_{K}^{\prime}\left(  u_{j,k}^{p}\right)  -u_{i,k}^{-1}\int_{0}%
^{u_{i,k}}s^{p-1}g_{K}^{\prime}\left(  s^{p}\right)  ds.$ Making use of Lemma
$\ref{lemma1}$ (see the Appendix) and using similar arguments as used to the
term $R_{k,n}$ in the proof of Proposition $1$ (part (a)) in \cite{BWW2019},
we show that $\mathcal{L}_{n1}^{\left(  1,1\right)  }=o_{\mathbf{P}}\left(
1\right)  ,$ that we omit further details. It is clear that the variance of
$p\left(  E_{i}-1\right)  -\left(  \mathbf{1}\left\{  U_{i}\leq p\right\}
-p\right)  $ equals $p^{2}+p\left(  1-p\right)  =p,$ thus using Lyapunov's
central limit theorem (for triangular arrays), we get
\[
\mathcal{L}_{n1}^{\left(  1,2\right)  }\overset{\mathcal{D}}{\rightarrow
}\mathcal{N}\left(  0,p\gamma^{2}\int_{0}^{1}s^{-2}\left(  \int_{0}^{s}%
t^{p-1}g_{K}^{\prime}\left(  t^{p}\right)  dt\right)  ^{2}ds\right)  ,\text{
as }n\rightarrow\infty.
\]
By using a change of variables, we readily showed that%
\[
p\gamma^{2}\int_{0}^{1}s^{-2}\left(  \int_{0}^{s}t^{p-1}g_{K}^{\prime}\left(
t^{p}\right)  dt\right)  ^{2}ds=\gamma_{1}^{2}\int_{0}^{1}t^{-1/p+1}%
K^{2}\left(  t\right)  dt.
\]
Let us consider the second term $\mathcal{L}_{n1}^{\left(  2\right)  }.$ From
assertion $\left(  57\right)  $ in \cite{BWW2019}, we infer that $p\circ
U_{H}\left(  n/i\right)  -p=p\left(  1-p\right)  \left(  D_{\gamma}\right)
_{\ast}\beta_{\ast}C^{-\gamma\beta_{\ast}}\left(  i/n\right)  ^{\gamma
\beta_{\ast}}\left(  1+o\left(  1\right)  \right)  ,$ where%
\[
\left(  D_{\gamma}\right)  _{\ast}:=\gamma_{1}D_{1}\mathbf{1}\left\{
\beta_{1}<\beta_{2}\right\}  -\gamma_{2}D_{2}\mathbf{1}\left\{  \beta
_{1}>\beta_{2}\right\}  +\left(  \gamma_{1}D_{1}-\gamma_{2}D_{2}\right)
\mathbf{1}\left\{  \beta_{1}=\beta_{2}\right\}  .
\]
It follows that
\[
\mathcal{L}_{n1}^{\left(  2\right)  }=-\gamma p\left(  1-p\right)  \left(
D_{\gamma}\right)  _{\ast}\beta_{\ast}C^{-\gamma\beta_{\ast}}\frac{\sqrt{k}%
}{k+1}\sum_{i=2}^{k}\left\{  i^{-1}\sum_{j=2}^{i}u_{j,k}^{p-1}g_{K}^{\prime
}\left(  u_{j,k}^{p}\right)  \right\}  \left(  i/n\right)  ^{\gamma\beta
_{\ast}},
\]
which may be decomposed into the sum of%
\[
\mathcal{L}_{n1}^{\left(  2,1\right)  }:=-\gamma p\left(  1-p\right)  \left(
D_{\gamma}\right)  _{\ast}\beta_{\ast}C^{-\gamma\beta_{\ast}}\frac{\sqrt{k}%
}{k+1}\sum_{i=2}^{k}\left\{  u_{i,k}^{-1}\int_{0}^{u_{i,k}}s^{p-1}%
g_{K}^{\prime}\left(  s^{p}\right)  \right\}  \left(  i/n\right)
^{\gamma\beta_{\ast}}%
\]
and%
\[
\mathcal{L}_{n1}^{\left(  2,2\right)  }:=-\gamma p\left(  1-p\right)  \left(
D_{\gamma}\right)  _{\ast}\beta_{\ast}C^{-\gamma\beta_{\ast}}\frac{\sqrt{k}%
}{k+1}\sum_{i=2}^{k}d_{i,k}\left(  g_{K}\right)  \left(  i/n\right)
^{\gamma\beta_{\ast}}.
\]
Observe that $\mathcal{L}_{n1}^{\left(  2,1\right)  }=\left\{  \frac{1}%
{k+1}\sum_{i=2}^{k}\left(  u_{i,k}^{-1+\gamma\beta_{\ast}}\int_{0}^{u_{i,k}%
}s^{p-1}g_{K}^{\prime}\left(  s^{p}\right)  \right)  \right\}  b_{1,k},$ where
$b_{1,k}:=-\gamma p\left(  1-p\right)  \left(  D_{\gamma}\right)  _{\ast}%
\beta_{\ast}C^{-\gamma\beta_{\ast}}\sqrt{k}\left(  \frac{k+1}{n}\right)
^{\gamma\beta_{\ast}}.$ The quantity between two braces is a Riemann sum,
which converges, as $n\rightarrow\infty,$ to
\[
\int_{0}^{1}s^{-1+\gamma\beta_{\ast}}\left(  \int_{0}^{s}t^{p-1}g_{K}^{\prime
}\left(  t^{p}\right)  dt\right)  ds=\frac{1}{p^{2}}\int_{0}^{1}t^{\gamma
_{1}\beta_{\ast}}K\left(  t\right)  dt.
\]
Recall that $\sqrt{k}\left(  \frac{k}{n}\right)  ^{\gamma\beta_{\ast}%
}=O_{\mathbf{P}}\left(  1\right)  ,$ then $\sqrt{k}\left(  \frac{k+1}%
{n}\right)  ^{\gamma\beta_{\ast}}=\sqrt{k}\left(  \frac{k}{n}\right)
^{\gamma\beta_{\ast}}+o_{\mathbf{P}}\left(  1\right)  ,$ thus $\mathcal{L}%
_{n1}^{\left(  2,1\right)  }=\mathcal{B}_{1,k}+o_{\mathbf{P}}\left(  1\right)
,$ where%
\begin{equation}
\mathcal{B}_{1,k}:=-\gamma_{1}\left(  1-p\right)  \left(  D_{\gamma}\right)
_{\ast}\beta_{\ast}C^{-\gamma\beta_{\ast}}\int_{0}^{1}t^{\gamma_{1}\beta
_{\ast}}K\left(  t\right)  dt\left\{  \sqrt{k}\left(  k/n\right)
^{\gamma\beta_{\ast}}\right\}  . \label{Bias-1}%
\end{equation}
Once again, making use of Lemma $\ref{lemma1},$ we show that
\[
\mathcal{L}_{n1}^{\left(  2,2\right)  }=O_{\mathbf{P}}\left(  1\right)
k^{-1/2+p}\left(  k/n\right)  ^{\gamma\beta_{\ast}}\left\{  k^{-1}\sum
_{i=1}^{k}u_{i,k}^{-1+\gamma\beta_{\ast}}\right\}  ,
\]
as $n\rightarrow\infty.$ Since $k^{-1}\sum_{i=1}^{k}u_{i,k}^{-1+\gamma
\beta_{\ast}}$ converges to $\int_{0}^{1}s^{-1+\gamma\beta_{\ast}}%
ds=1/\gamma\beta_{\ast}$ (Riemann sum) and both $k^{-1/2+p}$ and $\left(
k/n\right)  ^{\gamma\beta_{\ast}}$ tend to zero, this means that
$\mathcal{L}_{n1}^{\left(  2,2\right)  }=o_{\mathbf{P}}\left(  1\right)  ,$
thus $\mathcal{L}_{n1}^{\left(  2\right)  }=\mathcal{B}_{1,k}+o_{\mathbf{P}%
}\left(  1\right)  .$ To finish with the term $\mathcal{L}_{n1},$ we will also
show that $\sqrt{k}\mathcal{L}_{n1}^{\left(  3\right)  }=o_{\mathbf{P}}\left(
1\right)  $ as $n\rightarrow\infty.$ Recall that $g_{K}^{\prime}$ is a
bounded, then%
\begin{align*}
\mathcal{L}_{n1}^{\left(  3\right)  }  &  =O_{\mathbf{P}}\left(  1\right)
\frac{\sqrt{k}}{k+1}\sum_{i=2}^{k}\left\{  u_{i,k}^{-1}\sum_{j=2}^{i}%
u_{j,k}^{p-1}\right\} \\
&  \times\left\vert \mathbf{1}\left\{  U_{n-i+1}\leq p\circ U_{H}\left(
Y_{n-i+1:n}\right)  \right\}  -\mathbf{1}\left\{  U_{n-i+1}\leq p\circ
U_{H}\left(  n/i\right)  \right\}  \right\vert .
\end{align*}
By using similar arguments as used for the term $B_{1,k}^{\left(  1\right)  }$
in \cite{BWW2019}, we show that $\sqrt{k}\mathcal{L}_{n1}^{\left(  3\right)
}=o_{\mathbf{P}}\left(  1\right)  ,$ therefore we omit details. We now
consider the term%
\[
\sum_{i=2}^{k}B_{i,n}S_{i,k}=\frac{1}{\gamma_{1}}b_{n,k}\sum_{i=2}^{k}%
u_{i,k}^{\beta_{\ast}\gamma}\left(  \frac{1}{i}\sum_{j=2}^{i}RF_{j}%
g_{K}^{\prime}\left(  RF_{j}\right)  \frac{\xi_{j}^{\prime}}{j}\right)
E_{i}^{\left(  n\right)  }.
\]
By using similar decomposition as used to the term $\sum_{i=2}^{k}%
A_{i,n}S_{i,k},$ we end up with
\begin{align*}
\sum_{i=2}^{k}B_{i,n}S_{i,k}  &  =\left(  1+o_{\mathbf{P}}\left(  1\right)
\right)  \frac{1}{\gamma_{1}}b_{n,k}\frac{1}{k+1}\sum_{i=2}^{k}u_{i,k}%
^{\beta_{\ast}\gamma}\left(  \frac{1}{i}\sum_{j=2}^{i}u_{j,k}^{p-1}%
g_{K}^{\prime}\left(  u_{j,k}^{p}\right)  \xi_{j}^{\prime}\right)  E_{i}\\
&  =\left(  1+o_{\mathbf{P}}\left(  1\right)  \right)  pb_{n,k}\left\{
\frac{1}{k+1}\sum_{i=2}^{k}u_{i,k}^{\beta_{\ast}\gamma}\left(  \frac{1}{i}%
\sum_{j=2}^{i}u_{j,k}^{p-1}g_{K}^{\prime}\left(  u_{j,k}^{p}\right)  \right)
E_{i}\right\}  .
\end{align*}
From Lemma $\ref{lemma2}$ the previous factor between two braces converges in
probability, as $n\rightarrow\infty,$ to $p^{-2}\int_{0}^{1}t^{\gamma_{1}%
\beta_{\ast}}K\left(  t\right)  dt,$ therefore $\sqrt{k}\sum_{i=2}^{k}%
B_{i,n}S_{i,k}=\mathcal{B}_{2,k}+o_{\mathbf{P}}\left(  1\right)  ,$ where%
\begin{equation}
\mathcal{B}_{2,k}:=-\gamma^{2}\beta_{\ast}D_{\ast}C^{-\gamma\beta_{\ast}%
}p^{-1}\int_{0}^{1}t^{\gamma_{1}\beta_{\ast}}K\left(  t\right)  dt\left\{
\sqrt{k}\left(  k/n\right)  ^{\gamma\beta_{\ast}}\right\}  . \label{Bais-2}%
\end{equation}
In conclusion, we showed that $\sqrt{k}\mathcal{T}_{k,n}^{\left(
1,1,1\right)  }=\mathcal{N}\left(  0,\sigma_{K}^{2}\right)  +\mathcal{B}%
_{1,k}+\mathcal{B}_{2,k}+o_{\mathbf{P}}\left(  1\right)  .$ Let us now
consider the term $\mathcal{T}_{k,n}^{\left(  1,1,2\right)  }.$ Following the
same steps as used in the proof of subsection A.3.3 in \cite{BWW2019}, we show
that $\sqrt{k}\mathcal{T}_{k,n}^{\left(  1,1,2\right)  }=\mathcal{B}%
_{3,k}+o_{\mathbf{P}}\left(  1\right)  ,$ where
\begin{equation}
\mathcal{B}_{3,k}:=-\gamma^{2}\beta_{\ast}D_{1}C^{-\gamma\beta_{\ast}}%
p^{-2}\int_{0}^{1}t^{\gamma_{1}\beta_{\ast}}K\left(  t\right)  dt\left\{
\sqrt{k}\left(  k/n\right)  ^{\gamma\beta_{\ast}}\right\}  \mathbf{1}\left\{
\beta_{1}\leq\beta_{2}\right\}  , \label{Bais-3}%
\end{equation}
thereby $\sqrt{k}\mathcal{T}_{k,n}^{\left(  1,1\right)  }=\mathcal{N}\left(
0,\sigma_{K}^{2}\right)  +\mathcal{B}_{1,k}+\mathcal{B}_{2,k}-\mathcal{B}%
_{3,k}+o_{\mathbf{P}}\left(  1\right)  .$ Using similar arguments as used to
the proof given in subsection A.3.2 of the same paper, we also show that
$\sqrt{k}\mathcal{T}_{k,n}^{\left(  1,2\right)  }=\mathcal{B}_{4,k}%
+o_{\mathbf{P}}\left(  1\right)  ,$ where
\begin{equation}
\mathcal{B}_{4,k}:=-\gamma^{2}\beta_{\ast}\left(  D_{\ast}+\frac{D^{\ast}}%
{p}\right)  C^{-\gamma\beta_{\ast}}p^{-1}\int_{0}^{1}t^{\gamma_{1}\beta_{\ast
}}K\left(  t\right)  dt\left\{  \sqrt{k}\left(  \frac{k}{n}\right)
^{\gamma\beta_{\ast}}\right\}  , \label{Bais-4}%
\end{equation}
with $D^{\ast}:=-pD_{\ast}\mathbf{1}\left\{  \beta_{2}<\beta_{1}\right\}
+\left(  D_{1}-pD_{\ast}\right)  \mathbf{1}\left\{  \beta_{1}\leq\beta
_{2}\right\}  .$ To summarize, we showed that%
\begin{equation}
\sqrt{k}\left(  \widehat{\gamma}_{1,k}-\gamma_{1}\right)  =\mathcal{L}%
_{n1}^{\left(  1,1\right)  }+\mathcal{B}_{k}+o_{\mathbf{P}}\left(  1\right)
,\text{ as }n\rightarrow\infty, \label{app-f}%
\end{equation}
where $\mathcal{L}_{n1}^{\left(  1,1\right)  }\overset{D}{\rightarrow
}\mathcal{N}\left(  0,\sigma_{K}^{2}\right)  $ and $\mathcal{B}_{k}%
:=\mathcal{B}_{1,k}+\mathcal{B}_{2,k}-\mathcal{B}_{3,k}+\mathcal{B}_{4,k}.$
Substituting the four biases by their corresponding formulas, we end up with
$\mathcal{B}_{k}=m_{K}\left\{  \sqrt{k}\left(  k/n\right)  ^{\gamma\beta
_{\ast}}\right\}  ,$ where $m_{K}$ is as in Theorem $\ref{Theorem1},$ which
completes the proof.

\subsection{Proof of Theorem $\ref{Theorem2}$}

Note that $\widehat{\tau}_{1}=-\beta_{1}\widehat{\gamma}_{1,k}$ $\ $and
$\rho\left(  -\beta_{1}\widehat{\gamma}_{1,k}\right)  =\widehat{\rho}$ where
$\rho\left(  \cdot\right)  $ is as in $\left(  \ref{rho-t}\right)  ,$ it
follows that
\[
\widehat{\gamma}_{1,k}^{\ast}=\widehat{\gamma}_{1,k}-\rho\left(  -\beta
_{1}\widehat{\gamma}_{1,k}\right)  \left\{  T_{k}\left(  \beta_{1};K\right)
-\widehat{\gamma}_{1,k}\int_{0}^{1}s^{\beta_{1}\widehat{\gamma}_{1,k}}K\left(
s\right)  ds\right\}  .
\]
It is obvious that
\begin{align*}
\widehat{\gamma}_{1,k}^{\ast}-\gamma_{1}  &  =\left(  \widehat{\gamma}%
_{1,k}-\gamma_{1}\right)  -\rho\left(  -\beta_{1}\widehat{\gamma}%
_{1,k}\right)  \left\{  T_{k}\left(  \beta_{1};K\right)  -\gamma_{1}\int
_{0}^{1}s^{\beta_{1}\gamma_{1}}K\left(  s\right)  ds\right\} \\
&  \ \ +\rho\left(  -\beta_{1}\widehat{\gamma}_{1,k}\right)  \left(
\widehat{\gamma}_{1,k}\int_{0}^{1}s^{\beta_{1}\widehat{\gamma}_{1,k}}K\left(
s\right)  ds-\gamma_{1}\int_{0}^{1}s^{\beta_{1}\gamma_{1}}K\left(  s\right)
ds\right)  .
\end{align*}
Using Taylor's expansion to function $t\rightarrow t\int_{0}^{1}s^{t}K\left(
s\right)  ds,$ we get
\begin{align*}
&  \widehat{\gamma}_{1,k}\int_{0}^{1}s^{\beta_{1}\widehat{\gamma}_{1,k}%
}K\left(  s\right)  ds-\gamma_{1}\int_{0}^{1}s^{\beta_{1}\gamma_{1}}K\left(
s\right)  ds\\
&  =\left(  \widehat{\gamma}_{1,k}-\gamma_{1}\right)  \int_{0}^{1}s^{\beta
_{1}\gamma_{1}}\left(  1+\beta_{1}\gamma_{1}\log s\right)  K\left(  s\right)
ds\\
&  +\frac{1}{2}\left(  \widehat{\gamma}_{1,k}-\gamma_{1}\right)  ^{2}\beta
_{1}\int_{0}^{1}s^{\beta_{1}\widetilde{\gamma}_{1,k}}\left(  \log s\right)
\left(  \beta_{1}\widetilde{\gamma}_{1,k}\log s+2\right)  K\left(  s\right)
ds,
\end{align*}
where $\widetilde{\gamma}_{1,k}$ is between $\widehat{\gamma}_{1,k}$ and
$\gamma_{1}.$ Note that $s^{t}\left\vert \log^{m}s\right\vert <\exp\left(
-2m/t\right)  \leq1,$ for any $0<s\leq1,$ $m\geq0$ and $t>0.$ On the other
hand $K$ is bounded on the real line and $\widehat{\gamma}_{1,k}%
\overset{\mathbf{P}}{\rightarrow}\gamma_{1},$ then
\[
\int_{0}^{1}s^{\beta_{1}\widetilde{\gamma}_{1,k}}\left\vert \left(  \log
s\right)  \left(  \beta_{1}\widetilde{\gamma}_{1,k}\log s+2\right)
\right\vert K\left(  s\right)  ds=O_{\mathbf{P}}\left(  1\right)  ,\text{ as
}n\rightarrow\infty.
\]
From Theorem $\ref{Theorem1},$ we deduce that $\left(  \widehat{\gamma}%
_{1,k}-\gamma_{1}\right)  ^{2}=O_{\mathbf{P}}\left(  k^{-1}\right)  ,$
therefore
\[
\widehat{\gamma}_{1,k}\int_{0}^{1}s^{\beta_{1}\widehat{\gamma}_{1,k}}K\left(
s\right)  ds-\gamma_{1}\int_{0}^{1}s^{\beta_{1}\gamma_{1}}K\left(  s\right)
ds=\eta_{1}\left(  \widehat{\gamma}_{1,k}-\gamma_{1}\right)  +O_{\mathbf{P}%
}\left(  k^{-1}\right)  ,
\]
where $\eta_{1}=\eta_{1}\left(  \tau_{1}\right)  $ is as in $\left(
\ref{eta1}\right)  .$ To summarize, we showed that
\begin{align*}
\widehat{\gamma}_{1,k}^{\ast}-\gamma_{1}  &  =\left(  1+\eta_{1}\rho\left(
-\beta_{1}\widehat{\gamma}_{1,k}\right)  \right)  \left(  \widehat{\gamma
}_{1,k}-\gamma_{1}\right) \\
&  -\rho\left(  -\beta_{1}\widehat{\gamma}_{1,k}\right)  \left\{  T_{k}\left(
\beta_{1};K\right)  -\gamma_{1}\int_{0}^{1}s^{\beta_{1}\gamma_{1}}K\left(
s\right)  ds\right\}  +O_{\mathbf{P}}\ \left(  k^{-1}\right)  .
\end{align*}
In the proof of Theorem $\ref{Theorem1}$ (equation $\left(  \ref{app-f}%
\right)  ),$ we stated that%
\[
\sqrt{k}\left(  \widehat{\gamma}_{1,k}-\gamma_{1}\right)  =\gamma\frac
{\sqrt{k}}{k+1}\sum_{i=2}^{k}\left\{  u_{i,k}^{-1}\sum_{j=2}^{i}u_{j,k}%
^{p-1}g_{K}^{\prime}\left(  u_{j,k}^{p}\right)  \right\}  A_{i,n}%
+\mathcal{B}_{k}+o_{\mathbf{P}}\left(  1\right)  ,
\]
where $A_{i,n}:=p\left(  E_{i}-1\right)  -\left(  \mathbf{1}\left\{  U_{i}\leq
p\right\}  -p\right)  $ and $\mathcal{B}_{k}:=m_{K}\sqrt{k}\left(  k/n\right)
^{\gamma\beta_{1}},$ where $m_{K}:=-\mathbf{1}\left\{  \beta_{1}\leq\beta
_{2}\right\}  \beta_{1}D_{1}C^{-\gamma\beta_{1}}\gamma_{1}^{2}\eta_{2},$ with
$\eta_{2}=\eta_{2}\left(  \tau_{1}\right)  $ is as in $\left(  \ref{eta12}%
\right)  .$ Next we provide a Gaussian approximation to $T_{k}\left(
\beta_{1};K\right)  $ as well. To this end, we will follow similar steps as
used in the proof of Theorem $\ref{Theorem1}$ as well as that of Theorem 1 in
\cite{BWW2019}. Let us write
\[
\left(  \frac{Z_{n-j:n}}{Z_{n-k:n}}\right)  ^{-\beta_{1}}-\left(
\frac{Z_{n-j+1:n}}{Z_{n-k:n}}\right)  ^{-\beta_{1}}=\exp\left(  -\beta_{1}%
\log\frac{Z_{n-j:n}}{Z_{n-k:n}}\right)  -\exp\left(  -\beta_{1}\log
\frac{Z_{n-j+1:n}}{Z_{n-k:n}}\right)  .
\]
Once again, by using Taylor's expansion to function $t\rightarrow\exp\left(
-\beta_{1}t\right)  ,$ yields
\begin{align*}
&  \frac{1}{\beta_{1}}\left\{  \left(  \frac{Z_{n-j:n}}{Z_{n-k:n}}\right)
^{-\beta_{1}}-\left(  \frac{Z_{n-j+1:n}}{Z_{n-k:n}}\right)  ^{-\beta_{1}%
}\right\} \\
&  =\left(  \frac{Z_{n-j+1:n}}{Z_{n-kn}}\right)  ^{-\beta_{1}}\log
\frac{Z_{n-j+1:n}}{Z_{n-j:n}}+\frac{\beta_{1}}{2}\left(  \log\frac
{Z_{n-j+1:n}}{Z_{n-j:n}}\right)  ^{2}\left(  \frac{\widetilde{Z}_{j:n}%
}{Z_{n-k:n}}\right)  ^{-\beta_{1}},
\end{align*}
for some rv's $\widetilde{Z}_{j:n}$ satisfying $Z_{n-j:n}\leq\widetilde
{Z}_{j:n}\leq$ $Z_{n-j+1:n}.$ Thus $T_{k}\left(  \beta_{1};K\right)  $ may be
decomposed into the sum of%
\[
\widetilde{T}_{k}\left(  \beta_{1};K\right)  :=\sum_{j=2}^{k}\frac
{\overline{F}_{n}^{KM}\left(  Z_{n-j+1:n}\right)  }{\overline{F}_{n}%
^{KM}\left(  Z_{n-k:n}\right)  }K\left(  \frac{\overline{F}_{n}^{KM}\left(
Z_{n-j+1:n}\right)  }{\overline{F}_{n}^{KM}\left(  Z_{n-k:n}\right)  }\right)
\left(  \frac{Z_{n-j+1:n}}{Z_{n-kn}}\right)  ^{-\beta_{1}}\log\frac
{Z_{n-j+1:n}}{Z_{n-j:n}}%
\]
and%
\[
R_{n}:=\frac{\beta_{1}}{2}\sum_{j=2}^{k}\frac{\overline{F}_{n}^{KM}\left(
Z_{n-j+1:n}\right)  }{\overline{F}_{n}^{KM}\left(  Z_{n-k:n}\right)  }K\left(
\frac{\overline{F}_{n}^{KM}\left(  Z_{n-j+1:n}\right)  }{\overline{F}_{n}%
^{KM}\left(  Z_{n-k:n}\right)  }\right)  \left(  \frac{\widetilde{Z}_{j:n}%
}{Z_{n-k:n}}\right)  ^{-\beta_{1}}\left(  \log\frac{Z_{n-j+1:n}}{Z_{n-j:n}%
}\right)  ^{2}.
\]
Since $K$ is bounded on $\mathbb{R},$ then $R_{n}=O_{\mathbf{P}}\left(
\widetilde{R}_{n}\right)  ,$ where
\[
\widetilde{R}_{n}:=\frac{\beta_{1}}{2}\sum_{j=2}^{k}\frac{\overline{F}%
_{n}^{KM}\left(  Z_{n-j+1:n}\right)  }{\overline{F}_{n}^{KM}\left(
Z_{n-k:n}\right)  }\left(  \frac{\widetilde{Z}_{j:n}}{Z_{n-k:n}}\right)
^{-\beta_{1}}\left(  \log\frac{Z_{n-j+1:n}}{Z_{n-j:n}}\right)  ^{2}.
\]
The remainder term $\widetilde{R}_{n}$ corresponds to $R_{n}^{\left(
1\right)  }$ stated in equation $\left(  22\right)  $ in \cite{BWW2019} which
is negligible in the sense that $\sqrt{k}R_{n}^{\left(  1\right)
}=o_{\mathbf{P}}\left(  1\right)  $ thus $\sqrt{k}R_{n}=o_{\mathbf{P}}\left(
1\right)  $ as well. We now focus on the statistic $\widetilde{T}_{k}\left(
\beta_{1};K\right)  $ which is somewhat similar to the kernel estimator
$\widehat{\gamma}_{1,k}.$ Then using similar arguments as used in the proof of
Theorem $\ref{Theorem1}$ we provide a Gaussian approximation to this one
without further details. We summarize the result as follows%
\begin{align*}
&  \sqrt{k}\left\{  T_{k}\left(  \beta_{1};K\right)  -\gamma_{1}\int_{0}%
^{1}s^{\beta_{1}\gamma_{1}}K\left(  s\right)  ds\right\} \\
&  =\gamma\frac{\sqrt{k}}{k+1}\sum_{i=2}^{k}\left\{  u_{i,k}^{-1}\sum
_{j=2}^{i}u_{j,k}^{p-1}g_{K}^{\prime\ast}\left(  u_{j,k}^{p}\right)  \right\}
A_{i,n}+\mathbb{B}_{k}+o_{\mathbf{P}}\left(  1\right)  ,
\end{align*}
where $g_{K}^{\ast}\left(  s\right)  :=s^{\beta_{1}\gamma_{1}+1}K\left(
s\right)  $ and $\mathbb{B}_{k}:=-\mathbf{1}\left\{  \beta_{1}\leq\beta
_{2}\right\}  \beta_{1}D_{1}C^{-\gamma\beta_{1}}\gamma_{1}^{2}\eta_{3}\sqrt
{k}\left(  k/n\right)  ^{\gamma\beta_{1}},$ with $\eta_{3}=\eta_{3}\left(
\tau_{1}\right)  $ is as in $\left(  \ref{eta12}\right)  .$ Recall that
$\widehat{\tau}_{1}=-\beta_{1}\widehat{\gamma}_{1,k}$ is a consistent
estimator for $\tau_{1},$ then by means of the convergence dominate theorem,
we easily showed that $\rho\left(  \widehat{\tau}_{1}\right)  \overset
{\mathbf{P}}{\rightarrow}\rho\left(  \tau_{1}\right)  =\rho\left(  -\beta
_{1}\gamma_{1}\right)  ,$ as $n\rightarrow\infty,$ where
\begin{equation}
\rho\left(  -\beta_{1}\gamma_{1}\right)  =\rho\left(  \tau_{1}\right)
=\frac{\eta_{2}}{\eta_{3}-\eta_{1}\eta_{2}}. \label{rho}%
\end{equation}
Thus, in view of the above two Gaussian approximations, we get%
\begin{equation}
\sqrt{k}\left(  \widehat{\gamma}_{1,k}^{\ast}-\gamma_{1}\right)  =\gamma
\frac{\sqrt{k}}{k+1}\sum_{i=2}^{k}\left\{  u_{i,k}^{-1}\sum_{j=2}^{i}%
u_{j,k}^{p-1}\varphi\left(  u_{j,k}^{p}\right)  \right\}  A_{i,n}%
+\mathbb{B}_{k}^{\ast}+o_{\mathbf{P}}\left(  1\right)  , \label{GA}%
\end{equation}
where
\begin{equation}
\varphi\left(  s\right)  :=\left(  1+\eta_{1}\rho\left(  -\beta_{1}\gamma
_{1}\right)  \right)  g_{K}^{\prime}\left(  s\right)  -\rho\left(  -\beta
_{1}\gamma_{1}\right)  g_{K}^{\prime\ast}\left(  s\right)  , \label{psin}%
\end{equation}
and $\mathbb{B}_{k}^{\ast}:=\left(  1+\eta_{1}\rho\left(  -\beta_{1}\gamma
_{1}\right)  \right)  \mathcal{B}_{k}-\rho\left(  -\beta_{1}\gamma_{1}\right)
\mathbb{B}_{k}.$ The objective now is to establish the asymptotic normality of
$\widehat{\gamma}_{1,k}^{\ast}.$ Using similar arguments as used to the term
$\mathcal{L}_{n1}^{\left(  1\right)  }$ in the proof of Theorem
$\ref{Theorem1},$ we also show that the first term in $\left(  \ref{GA}%
\right)  ,$ converges in distribution to $\mathcal{N}\left(  0,\sigma
_{K}^{\ast2}\right)  ,$ where $\sigma_{K}^{\ast2}:=p\gamma^{2}\int_{0}%
^{1}\left(  s^{-1}\int_{0}^{s}t^{p-1}\varphi\left(  t^{p}\right)  dt\right)
^{2}ds.$ Using elementary algebra, we obtain%
\[
\sigma_{K}^{\ast2}=p\gamma_{1}^{2}\int_{0}^{1}t^{-1/p+1}\left(  \left(
1+\eta_{1}\rho\left(  \tau_{1}\right)  \right)  -\rho\left(  \tau_{1}\right)
t^{-\tau_{1}}\right)  ^{2}K^{2}\left(  t\right)  dt,
\]
which meets the asymptotic variance stated in Theorem $\ref{Theorem2}.$ The
explicit form of the bias term $\mathbb{B}_{k}^{\ast}$ is
\[
\left\{  -\mathbf{1}\left\{  \beta_{1}\leq\beta_{2}\right\}  \beta_{1}%
D_{1}C^{-\gamma\beta_{1}}\gamma_{1}^{2}\right\}  \left\{  \left(  1+\eta
_{1}\rho\left(  \tau_{1}\right)  \right)  \eta_{2}-\rho\left(  \tau
_{1}\right)  \eta_{3}\right\}  \sqrt{k}\left(  k/n\right)  ^{\gamma\beta_{1}%
}.
\]
Substituting $\rho\left(  \tau_{1}\right)  $ by its expression $\left(
\ref{rho}\right)  ,$ we get $\left(  1+\eta_{1}\rho\left(  \tau_{1}\right)
\right)  \eta_{2}-\rho\left(  \tau_{1}\right)  \eta_{3}=0,$ it follows that
$\sqrt{k}\left(  \widehat{\gamma}_{1,k}^{\ast}-\gamma_{1}\right)
\overset{\mathcal{D}}{=}\mathcal{N}\left(  0,\sigma_{K}^{\ast2}\right)
+o_{\mathbf{P}}\left(  1\right)  ,$ which completes the proof the
Theorem.\medskip

\noindent\textbf{Conclusion. }We proposed a smoothed (or a kernel) version of
Worms's estimator \citep{WW2014} of the tail index of a Pareto-type
distribution for randomly censored data. This estimator is a generalization of
the well-known kernel estimator of the extreme value index for complete data
introduced by \cite{CDM85}. The corresponding bias-reduced version of the new
kernel estimator is derived and its asymptotic normality is established. One
of the main features of this estimator is its stability along the interval of
the number $k$ of top extreme values, contrary to Worms's one which behaves
erratically in a zig-zag way. The simulation study showed that, in the case of
weak censoring, the given estimator overall performed better than the
non-smoothed one in terms of bias and MSE. However, in the case of strong
censoring, the MSE of Worms's estimator seems to be better.

\section{\textbf{Appendix}}

\begin{proposition}
\textbf{\label{Propo1}}We have%
\[
\overline{F}_{n}^{KM}\left(  Z_{n-j:n}\right)  -\overline{F}_{n}^{KM}\left(
Z_{n-j+1:n}\right)  =\frac{\delta_{\left(  n-j+1\right)  }}{n\overline{G}%
_{n}^{KM}\left(  Z_{n-j:n}\right)  },\text{ for }j=1,...,k.
\]

\end{proposition}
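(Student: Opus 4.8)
The plan is to unwind the definitions of the two Kaplan--Meier product--limit estimators and reduce the identity to an elementary algebraic fact about the factors appearing in these products. Throughout I would work, as is customary since $H$ is continuous, on the event that the $Z_i$ are pairwise distinct, so that $\{i:Z_{i:n}\le Z_{n-j:n}\}=\{1,\dots,n-j\}$ and, for $j\ge1$, $Z_{n-j:n}<Z_{n:n}$, putting us in the ``$t<Z_{n:n}$'' branch of both definitions.

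First I would write, for $j\ge1$,
\[
\overline{F}_{n}^{KM}\left(Z_{n-j:n}\right)=\prod_{i=1}^{n-j}\left(1-\frac{\delta_{(i)}}{n-i+1}\right),\qquad
\overline{F}_{n}^{KM}\left(Z_{n-j+1:n}\right)=\prod_{i=1}^{n-j+1}\left(1-\frac{\delta_{(i)}}{n-i+1}\right).
\]
Since $n-(n-j+1)+1=j$, the second product is the first times the single extra factor $1-\delta_{(n-j+1)}/j$, so that
\[
\overline{F}_{n}^{KM}\left(Z_{n-j:n}\right)-\overline{F}_{n}^{KM}\left(Z_{n-j+1:n}\right)=\overline{F}_{n}^{KM}\left(Z_{n-j:n}\right)\,\frac{\delta_{(n-j+1)}}{j}.
\]
If $\delta_{(n-j+1)}=0$ both sides of the asserted identity vanish and there is nothing to prove, so from here on I would assume $\delta_{(n-j+1)}=1$.

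The key observation is the pairwise identity
\[
\left(1-\frac{\delta_{(i)}}{n-i+1}\right)\left(1-\frac{1-\delta_{(i)}}{n-i+1}\right)=\frac{n-i}{n-i+1},
\]
which one checks at once by distinguishing $\delta_{(i)}=0$ from $\delta_{(i)}=1$: in either case one of the two factors equals $1$ and the other equals $(n-i)/(n-i+1)$. Multiplying this over $i=1,\dots,n-j$ and telescoping the right-hand side gives
\[
\overline{F}_{n}^{KM}\left(Z_{n-j:n}\right)\,\overline{G}_{n}^{KM}\left(Z_{n-j:n}\right)=\prod_{i=1}^{n-j}\frac{n-i}{n-i+1}=\frac{j}{n}.
\]
For $j\ge1$ every factor of $\overline{G}_{n}^{KM}\left(Z_{n-j:n}\right)$ has denominator $n-i+1\ge j+1\ge2$, so no factor vanishes and $\overline{G}_{n}^{KM}\left(Z_{n-j:n}\right)>0$; hence the last display may be rewritten as $1/j=\bigl(n\,\overline{F}_{n}^{KM}(Z_{n-j:n})\,\overline{G}_{n}^{KM}(Z_{n-j:n})\bigr)^{-1}$. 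Substituting this into the expression for the difference found above yields $\overline{F}_{n}^{KM}(Z_{n-j:n})-\overline{F}_{n}^{KM}(Z_{n-j+1:n})=\delta_{(n-j+1)}/\bigl(n\,\overline{G}_{n}^{KM}(Z_{n-j:n})\bigr)$, which is the claim.

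There is no genuine obstacle here; the only steps deserving a word of care are the reduction to the case $\delta_{(n-j+1)}=1$ (so that one never divides by a possibly vanishing $\overline{G}_{n}^{KM}$) and the verification that $\overline{G}_{n}^{KM}(Z_{n-j:n})\neq0$ for $j\ge1$. The heart of the matter is simply the remark that, at each jump time, the $F$-- and $G$--Kaplan--Meier factors multiply to the corresponding factor $(n-i)/(n-i+1)$ of the empirical survival function of $Z$, so that $\overline{F}_{n}^{KM}\,\overline{G}_{n}^{KM}$ evaluated at $Z_{n-j:n}$ equals $j/n$.
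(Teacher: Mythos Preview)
Your proof is correct and follows the same overall route as the paper's: both arguments first factor out the single extra term in the Kaplan--Meier product to obtain
\[
\overline{F}_{n}^{KM}\left(Z_{n-j:n}\right)-\overline{F}_{n}^{KM}\left(Z_{n-j+1:n}\right)=\frac{\delta_{(n-j+1)}}{j}\,\overline{F}_{n}^{KM}\left(Z_{n-j:n}\right),
\]
and then convert the factor $\overline{F}_{n}^{KM}(Z_{n-j:n})/j$ into $1/\bigl(n\,\overline{G}_{n}^{KM}(Z_{n-j:n})\bigr)$. The difference lies only in how this last conversion is justified. The paper rewrites $1/j$ as $1/\bigl(n\overline{H}_n(Z_{n-j:n})\bigr)$ and then invokes the classical identity $\overline{F}_{n}^{KM}(z^{-})/\overline{H}_n(z^{-})=1/\overline{G}_{n}^{KM}(z^{-})$, citing \cite{SW86}. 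You instead prove the equivalent relation $\overline{F}_{n}^{KM}(Z_{n-j:n})\,\overline{G}_{n}^{KM}(Z_{n-j:n})=j/n$ directly, via the elementary pairwise identity $(1-\delta_{(i)}/(n-i+1))(1-(1-\delta_{(i)})/(n-i+1))=(n-i)/(n-i+1)$ and a telescoping product. Your version is entirely self-contained and avoids the external reference; it also makes transparent why the identity holds (the $F$-- and $G$--Kaplan--Meier factors at each jump combine to reconstruct the empirical survival function of $Z$). Your explicit check that $\overline{G}_{n}^{KM}(Z_{n-j:n})>0$ for $j\ge1$ and your handling of the trivial case $\delta_{(n-j+1)}=0$ are small but welcome points of care that the paper leaves implicit.
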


\begin{proof}
It is clear that%
\begin{align*}
\overline{F}_{n}^{KM}\left(  Z_{n-j:n}\right)  -\overline{F}_{n}^{KM}\left(
Z_{n-j+1:n}\right)   &  =\prod\limits_{i=1}^{n-j}\left(  1-\dfrac
{\delta_{\left(  i\right)  }}{n-i+1}\right)  -\prod\limits_{i=1}%
^{n-j+1}\left(  1-\dfrac{\delta_{\left(  i\right)  }}{n-i+1}\right) \\
&  =\frac{\delta_{\left(  n-j+1\right)  }}{j}\prod\limits_{i=1}^{n-j}\left(
1-\dfrac{\delta_{\left(  i\right)  }}{n-i+1}\right)  ,
\end{align*}
thus%
\begin{equation}
\overline{F}_{n}^{KM}\left(  Z_{n-j:n}\right)  -\overline{F}_{n}^{KM}\left(
Z_{n-j+1:n}\right)  =\frac{\delta_{\left(  n-j+1\right)  }}{j}\overline{F}%
_{n}^{KM}\left(  Z_{n-j:n}\right)  . \label{D}%
\end{equation}
Let $H_{n}\left(  z\right)  :=n^{-1}\sum_{j=1}^{n}\mathbf{1}\left\{  Z_{j}\leq
z\right\}  $ be the empirical cdf pertaining to the sample $Z_{1},...,Z_{n}.$
Since $\overline{H}_{n}\left(  Z_{n-j:n}\right)  =j/n$ then $\overline{F}%
_{n}^{KM}\left(  Z_{n-j:n}\right)  -\overline{F}_{n}^{KM}\left(
Z_{n-j+1:n}\right)  $ equals%
\[
\frac{\delta_{\left(  n-j+1\right)  }}{n}\frac{\overline{F}_{n}^{KM}\left(
Z_{n-j:n}\right)  }{\overline{H}_{n}\left(  Z_{n-j:n}\right)  }=\frac
{\delta_{\left(  n-j+1\right)  }}{n}\frac{\overline{F}_{n}^{KM}\left(
Z_{n-j+1:n}^{-}\right)  }{\overline{H}_{n}\left(  Z_{n-j+1:n}^{-}\right)  }.
\]
From assertion $\left(  11\right)  $ in \cite{SW86} (page 295), we infer that
\[
\frac{\overline{F}_{n}^{KM}\left(  z^{-}\right)  }{\overline{H}_{n}\left(
z^{-}\right)  }=\frac{1}{\overline{G}_{n}^{KM}\left(  z^{-}\right)  },
\]
therefore
\[
\overline{F}_{n}^{KM}\left(  Z_{n-j:n}\right)  -\overline{F}_{n}^{KM}\left(
Z_{n-j+1:n}\right)  =\frac{\delta_{\left(  n-j+1\right)  }}{n\overline{G}%
_{n}^{KM}\left(  Z_{n-j+1:n}^{-}\right)  }=\frac{\delta_{\left(  n-j+1\right)
}}{n\overline{G}_{n}^{KM}\left(  Z_{n-j:n}\right)  },
\]
as sought.
\end{proof}

\begin{proposition}
\textbf{\label{Propo2}}We have\textbf{\ }%
\[
\frac{\overline{F}_{n}^{KM}\left(  Z_{n-j:n}\right)  }{\overline{F}_{n}%
^{KM}\left(  Z_{n-k:n}\right)  }-\frac{\overline{F}_{n}^{KM}\left(
Z_{n-j+1:n}\right)  }{\overline{F}_{n}^{KM}\left(  Z_{n-k:n}\right)
}=O_{\mathbf{P}}\left(  k^{-p}\right)  ,
\]
uniformly on $1\leq j\leq k,$ which tends to zero in probability as
$n\rightarrow\infty.$
\end{proposition}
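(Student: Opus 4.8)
The plan is to reduce the statement, via the intermediate identity $(\ref{D})$ established inside the proof of Proposition~\ref{Propo1}, to a single uniform upper bound on a normalized Kaplan--Meier survival product. Writing $A_{j}:=\overline{F}_{n}^{KM}(Z_{n-j:n})/\overline{F}_{n}^{KM}(Z_{n-k:n})$, identity $(\ref{D})$ divided by $\overline{F}_{n}^{KM}(Z_{n-k:n})$ gives
\[
\frac{\overline{F}_{n}^{KM}(Z_{n-j:n})}{\overline{F}_{n}^{KM}(Z_{n-k:n})}-\frac{\overline{F}_{n}^{KM}(Z_{n-j+1:n})}{\overline{F}_{n}^{KM}(Z_{n-k:n})}=\frac{\delta_{(n-j+1)}}{j}\,A_{j}\le\frac{1}{j}\,P_{j+1},
\]
where $P_{m}:=\prod_{i=m}^{k}\bigl(1-\delta_{(n-i+1)}/i\bigr)$, so that $A_{j}=P_{j+1}$ with the empty product $P_{k+1}=1$. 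For $j=k$ the right-hand side is $\le 1/k\le k^{-p}$ because $p\le1$. For $1\le j\le k-1$ it suffices to prove that $\sup_{2\le m\le k}(k/m)^{p}P_{m}=O_{\mathbf{P}}(1)$, since then $\tfrac1jP_{j+1}\le\tfrac1j\bigl((j+1)/k\bigr)^{p}O_{\mathbf{P}}(1)=k^{-p}O_{\mathbf{P}}(1)$ uniformly, using $(j+1)^{p}/j\le2^{p}j^{\,p-1}\le2^{p}$ for $j\ge1$ and $0<p\le1$; finally $k^{-p}\to0$ as $k\to\infty$.

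For that uniform bound I would start from $1-x\le e^{-x}$, which yields $P_{m}\le\exp\bigl(-\sum_{i=m}^{k}\delta_{(n-i+1)}/i\bigr)$ for $m\ge2$, and split
\[
\sum_{i=m}^{k}\frac{\delta_{(n-i+1)}}{i}=p\,H_{m}+B_{m}+M_{m},\qquad H_{m}:=\sum_{i=m}^{k}\frac1i,\quad B_{m}:=\sum_{i=m}^{k}\frac{p(Z_{n-i+1:n})-p}{i},\quad M_{m}:=\sum_{i=m}^{k}\frac{\delta_{(n-i+1)}-p(Z_{n-i+1:n})}{i},
\]
with $p(\cdot)$ as in $(\ref{pz})$. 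Since $H_{m}\ge\log(k/m)$, this gives $(k/m)^{p}P_{m}\le e^{p\log(k/m)-pH_{m}}e^{-B_{m}}e^{-M_{m}}\le e^{-B_{m}}e^{-M_{m}}$, hence $\sup_{2\le m\le k}(k/m)^{p}P_{m}\le\bigl(\sup_{m}e^{-B_{m}}\bigr)\bigl(\sup_{m}e^{-M_{m}}\bigr)$, and it remains to control the two suprema.

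The term $B_{m}$ is a deterministic-rate bias: by the second-order conditions $(\ref{C1})$--$(\ref{C3})$ and the expansion of $p\circ U_{H}$ used in \cite{BWW2019} (their assertion $(57)$, together with $Y_{n-i+1:n}$ being of order $n/i$), one has $|p(Z_{n-i+1:n})-p|=O_{\mathbf{P}}\bigl((i/n)^{\gamma\beta_{\ast}}\bigr)$ uniformly in $1\le i\le k$, whence $\sup_{m}|B_{m}|\le\sum_{i=2}^{k}|p(Z_{n-i+1:n})-p|/i=O_{\mathbf{P}}(1)\,n^{-\gamma\beta_{\ast}}\sum_{i=2}^{k}i^{\gamma\beta_{\ast}-1}=O_{\mathbf{P}}\bigl((k/n)^{\gamma\beta_{\ast}}\bigr)=o_{\mathbf{P}}(1)$, so $\sup_{m}e^{-B_{m}}=O_{\mathbf{P}}(1)$. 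The term $M_{m}$ is the stochastic part, and this is where the work lies: by the representation of \cite{EFG2008} (equivalently, by the classical conditional independence of concomitants of order statistics), conditionally on the order statistics $Z_{1:n}\le\cdots\le Z_{n:n}$, whose $\sigma$-field we denote $\mathcal{Z}$, the variables $\varepsilon_{i}:=\delta_{(n-i+1)}-p(Z_{n-i+1:n})$, $2\le i\le k$, are independent, centred, bounded by $1$, with conditional variance $\le1/4$. Consequently, as $m$ runs from $k$ down to $2$, the partial sums $M_{m}=\sum_{i=m}^{k}\varepsilon_{i}/i$ form a conditional martingale whose terminal conditional second moment is $\sum_{i=2}^{k}\mathrm{Var}(\varepsilon_{i}\mid\mathcal{Z})/i^{2}\le\tfrac14\sum_{i\ge2}i^{-2}$, a bound free of $n$ and of the $Z$'s. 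Doob's $L^{2}$ maximal inequality then gives $\mathbf{E}\bigl[\sup_{2\le m\le k}M_{m}^{2}\bigr]\le\pi^{2}/6$, so $\sup_{m}|M_{m}|=O_{\mathbf{P}}(1)$ and $\sup_{m}e^{-M_{m}}=O_{\mathbf{P}}(1)$. Multiplying the two estimates proves $\sup_{2\le m\le k}(k/m)^{p}P_{m}=O_{\mathbf{P}}(1)$, and the reduction of the first paragraph concludes.

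The main obstacle is the uniformity in $m$ (equivalently in $j$) of the centred sum $M_{m}$: pointwise each $M_{m}$ is $O_{\mathbf{P}}(1)$ by a one-line variance computation, but the supremum requires identifying the backward martingale structure of the concomitants given the $Z$-order statistics and appealing to a maximal inequality. A secondary point is that $B_{m}$ cannot be dealt with by the crude bound $|p(z)-p|\le1$, which would only give $O(\log k)$ rather than $o(1)$, so the second-order rate borrowed from \cite{BWW2019} is genuinely used; and the edge case $j=1$ together with the use of $0<p\le1$ (so that $j^{\,p-1}\le1$ and $1/k\le k^{-p}$) must be tracked. Alternatively, the bound $\sup_{m}(k/m)^{p}P_{m}=O_{\mathbf{P}}(1)$ could be extracted directly from the uniform tail approximations for $\overline{F}_{n}^{KM}$ proved in \cite{BWW2019}, bypassing the martingale computation.
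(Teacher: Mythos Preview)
Your argument is correct, but it takes a genuinely different route from the paper's. Both proofs start from identity $(\ref{D})$ to reduce the question to bounding $j^{-1}\overline{F}_{n}^{KM}(Z_{n-j:n})/\overline{F}_{n}^{KM}(Z_{n-k:n})$ uniformly in $j$. From there the paper simply invokes the uniform ratio bounds $\overline{F}_{n}^{KM}(x)/\overline{F}(x)=O_{\mathbf{P}}(1)$ and $\overline{F}(x)/\overline{F}_{n}^{KM}(x)=O_{\mathbf{P}}(1)$ on $x<Z_{n:n}$ due to \cite{Gill80} and \cite{Zhou91}, which replaces the Kaplan--Meier ratio by $\overline{F}(Z_{n-j:n})/\overline{F}(Z_{n-k:n})$; Potter's inequalities for $\overline{F}\in\mathcal{RV}_{(-1/\gamma_{1})}$ and $U_{H}\in\mathcal{RV}_{(\gamma)}$ then give $(j/k)^{p}(1+o_{\mathbf{P}}(1))$ uniformly, and the bound $j^{-1}(j/k)^{p}\le k^{-p}$ finishes.

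Your approach instead attacks the Kaplan--Meier product $P_{m}$ directly via $1-x\le e^{-x}$, the decomposition $pH_{m}+B_{m}+M_{m}$, the second-order rate for $B_{m}$, and Doob's maximal inequality (conditionally on the $Z$-order statistics) for the martingale $M_{m}$. This is more hands-on and avoids quoting the Gill/Zhou machinery, at the price of using the second-order conditions $(\ref{C1})$--$(\ref{C3})$ to control $B_{m}$; the paper's proof, by contrast, needs only first-order regular variation. You are aware of this trade-off, and your closing remark---that one could extract the uniform bound from the tail approximations for $\overline{F}_{n}^{KM}$ in \cite{BWW2019}---is essentially the paper's route (those approximations rest on the same Gill/Zhou and Potter ingredients).
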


\begin{proof}
In view of statement $\left(  \ref{D}\right)  ,$ we write%
\begin{equation}
\frac{\overline{F}_{n}^{KM}\left(  Z_{n-j:n}\right)  }{\overline{F}_{n}%
^{KM}\left(  Z_{n-k:n}\right)  }-\frac{\overline{F}_{n}^{KM}\left(
Z_{n-j+1:n}\right)  }{\overline{F}_{n}^{KM}\left(  Z_{n-k:n}\right)  }%
=\frac{\delta_{\left(  n-j+1\right)  }}{j}\frac{\overline{F}_{n}^{KM}\left(
Z_{n-j:n}\right)  }{\overline{F}_{n}^{KM}\left(  Z_{n-k:n}\right)  }.
\label{RA}%
\end{equation}
Indeed, form \cite{Gill80} (page 39) and \cite{Zhou91} (Theorem 2.2) we have
\[
\frac{\overline{F}_{n}^{KM}\left(  x\right)  }{\overline{F}\left(  x\right)
}=O_{\mathbf{P}}\left(  1\right)  =\frac{\overline{F}\left(  x\right)
}{\overline{F}_{n}^{KM}\left(  x\right)  },\text{ }%
\]
uniformly on $x<Z_{n:n}.$ This implies that the right-side of equation
$\left(  \ref{RA}\right)  $ is stochastically bounded by $j^{-1}\overline
{F}\left(  Z_{n-j:n}\right)  /\overline{F}\left(  Z_{n-k:n}\right)  ,$
uniformly over $1\leq j\leq k.$ Recall that $\overline{F}\in\mathcal{RV}%
_{\left(  -1/\gamma_{1}\right)  },$ then from Potter's inequalities B.1.19 in
\cite{deHF06} (page 367) we have: for $\epsilon>0$ and $x\geq1,$ there exists
$t_{0}>0,$ such that for $t\geq t_{0},$ $\left(  1-\epsilon\right)
x^{-1/\gamma_{1}-\epsilon}<\overline{F}\left(  tx\right)  /\overline{F}\left(
t\right)  <\left(  1+\epsilon\right)  x^{-1/\gamma_{1}+\epsilon}.$ Letting
$t=Z_{n-k:n}$ and $x=Z_{n-j:n}/Z_{n-k:n}$ and applying these inequalities,
yields%
\begin{equation}
\left(  1-\epsilon\right)  \left(  \frac{Z_{n-j:n}}{Z_{n-k:n}}\right)
^{-1/\gamma_{1}-\epsilon}<\frac{\overline{F}\left(  Z_{n-j:n}\right)
}{\overline{F}\left(  Z_{n-k:n}\right)  }<\left(  1+\epsilon\right)  \left(
\frac{Z_{n-j:n}}{Z_{n-k:n}}\right)  ^{-1/\gamma_{1}+\epsilon}. \label{Finq}%
\end{equation}
Since $U_{H}\in\mathcal{RV}_{\left(  \gamma\right)  },$ then
\[
\left(  1-\epsilon\right)  \left(  \frac{Y_{n-j:n}}{Y_{n-k:n}}\right)
^{\gamma-\epsilon}<\frac{U_{H}\left(  Y_{n-j:n}\right)  }{U_{H}\left(
Y_{n-k:n}\right)  }<\left(  1+\epsilon\right)  \left(  \frac{Y_{n-j:n}%
}{Y_{n-k:n}}\right)  ^{\gamma+\epsilon},
\]
where $Y_{1:n}\leq...\leq Y_{n:n}$ are the order statistics already defined in
the beginning of the proof of Theorem $\ref{Theorem1}.$ On the other hand,
from Corollary 2.2.2 in \cite{deHF06} (page 41), we infer that $\left(
j/n\right)  Y_{n-j:n}\overset{\mathbf{P}}{\rightarrow}1,$ $j=1,...,k,$ as
$n\rightarrow\infty,$ thus by using the previous inequalities, we get%
\[
\frac{Z_{n-j:n}}{Z_{n-k:n}}=\left(  1+o_{\mathbf{P}}\left(  1\right)  \right)
\left(  \frac{j}{k}\right)  ^{-\gamma},\text{ as }n\rightarrow\infty.
\]
Therefore, thanks of $\left(  \ref{Finq}\right)  ,$ we have%
\[
\frac{\overline{F}\left(  Z_{n-j:n}\right)  }{\overline{F}\left(
Z_{n-k:n}\right)  }=\left(  1+o_{\mathbf{P}}\left(  1\right)  \right)  \left(
\frac{j}{k}\right)  ^{p},\text{ }\left(  \text{since }p=\gamma/\gamma
_{1}\right)  ,
\]
uniformly over $1\leq j\leq k.$ Then we showed that%
\[
\frac{\overline{F}_{n}^{KM}\left(  Z_{n-j:n}\right)  }{\overline{F}_{n}%
^{KM}\left(  Z_{n-k:n}\right)  }-\frac{\overline{F}_{n}^{KM}\left(
Z_{n-j+1:n}\right)  }{\overline{F}_{n}^{KM}\left(  Z_{n-k:n}\right)  }=\left(
1+o_{\mathbf{P}}\left(  1\right)  \right)  \frac{1}{j}\left(  \frac{j}%
{k}\right)  ^{p},
\]
which completes the proof, since $\frac{1}{j}\left(  \frac{j}{k}\right)
^{p}=k^{-p}j^{p-1}<k^{-p},$ for all $1\leq j\leq k$ and $0<p<1.$
\end{proof}

\begin{proposition}
\textbf{\label{Propo3}}Let $\left(  a_{j}\right)  _{0\leq j\leq m}$ and
$\left(  b_{j}\right)  _{0\leq j\leq m}$ be two sequences of real numbers such
that $a_{0}=b_{m}=0,$ then $\sum_{j=1}^{m}\left(  a_{j}-a_{j-1}\right)
b_{j-1}=\sum_{j=1}^{m}a_{j}\left(  b_{j-1}-b_{j}\right)  .$
\end{proposition}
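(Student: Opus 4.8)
The statement is the classical Abel summation-by-parts identity, so the plan is entirely elementary algebra: expand both sides, perform a single index shift, and eliminate the leftover telescoping term using the two boundary hypotheses $a_0=b_m=0$.

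First I would expand the left-hand side by linearity,
\[
\sum_{j=1}^{m}\left(a_j-a_{j-1}\right)b_{j-1}=\sum_{j=1}^{m}a_j b_{j-1}-\sum_{j=1}^{m}a_{j-1}b_{j-1},
\]
and reindex the second sum via $i=j-1$, which turns it into $\sum_{i=0}^{m-1}a_i b_i$. In the same way the right-hand side becomes
\[
\sum_{j=1}^{m}a_j\left(b_{j-1}-b_j\right)=\sum_{j=1}^{m}a_j b_{j-1}-\sum_{j=1}^{m}a_j b_j.
\]

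Next I would subtract: the term $\sum_{j=1}^{m}a_j b_{j-1}$ is common to both expressions, so the difference of the two sides equals
\[
\sum_{j=1}^{m}a_j b_j-\sum_{i=0}^{m-1}a_i b_i=a_m b_m-a_0 b_0,
\]
a telescoping collapse. Since $a_0=0$ and $b_m=0$ by assumption, the right-hand side is $0$, hence the two original sums coincide, which is the claim.

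There is no genuine obstacle; the only point demanding care is the bookkeeping of the summation endpoints in the reindexing step, so that the $j=0$ and $j=m$ boundary terms line up and the telescoping cancellation is exact. In the body of the paper this identity is applied with $m=k$, $a_j=g_K\!\left(\overline{F}_n^{KM}(Z_{n-j:n})/\overline{F}_n^{KM}(Z_{n-k:n})\right)$ and $b_j=\log\!\left(Z_{n-j:n}/Z_{n-k:n}\right)$, for which $a_0=g_K(0)=0$ (because $\overline{F}_n^{KM}(Z_{n:n})=0$ and $g_K(s)=sK(s)$) and $b_k=\log 1=0$, so the hypotheses are met and $\left(\ref{gtild}\right)$ rearranges into $\left(\ref{K}\right)$.
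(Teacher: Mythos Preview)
Your argument is correct and is precisely the elementary Abel summation-by-parts computation the paper has in mind; the paper itself only writes ``It is straightforward by elementary algebra'' and gives no further details, so you have simply filled in what was left implicit. Your added remark about the specific choices of $a_j$ and $b_j$ verifying $a_0=0$ and $b_k=0$ in the passage from $(\ref{gtild})$ to $(\ref{K})$ is also accurate.
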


\begin{proof}
It is straightforward by elementary algebra.
\end{proof}

\begin{lemma}
\textbf{\label{lemma1}}There exists a positive constant $C=C\left(  K\right)
, $ such that
\[
\left\vert d_{i,k}\left(  g_{K}\right)  \right\vert \leq Cu_{i,k}^{-1}\left(
k+1\right)  ^{-p},\text{ for all }2\leq i\leq k.
\]

\end{lemma}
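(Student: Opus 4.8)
The plan is to recognize $d_{i,k}(g_K)$ as the error between a right-endpoint Riemann sum and the corresponding integral of the auxiliary function $\psi(s):=s^{p-1}g_K'(s^p)$ over the equispaced grid $u_{j,k}=j/(k+1)$, and then to estimate that error using only the boundedness of $g_K'$ and $g_K''$, which follows from assumption $[A4]$ since $g_K(x)=xK(x)$ gives $g_K'=K+xK'$ and $g_K''=2K'+xK''$, all products of bounded functions supported on $(0,1]$. Throughout one uses $0<p<1$, recalling $p=\gamma/\gamma_1=\gamma_2/(\gamma_1+\gamma_2)$.

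First I would set $\psi(s):=s^{p-1}g_K'(s^p)$ and use $u_{j,k}-u_{j-1,k}=(k+1)^{-1}$ together with $u_{i,k}^{-1}=(k+1)/i$ to write
\[
d_{i,k}(g_K)=\frac{k+1}{i}\left[\sum_{j=2}^{i}\psi(u_{j,k})(u_{j,k}-u_{j-1,k})-\int_{0}^{u_{i,k}}\psi(s)\,ds\right].
\]
Splitting $\int_{0}^{u_{i,k}}=\int_{0}^{u_{1,k}}+\sum_{j=2}^{i}\int_{u_{j-1,k}}^{u_{j,k}}$, the bracket equals $-\int_{0}^{u_{1,k}}\psi(s)\,ds+\sum_{j=2}^{i}\int_{u_{j-1,k}}^{u_{j,k}}\bigl[\psi(u_{j,k})-\psi(s)\bigr]\,ds$, so it suffices to bound each of these two pieces by a constant (depending only on $K$ and the fixed $p$) times $(k+1)^{-p}$; multiplying by $(k+1)/i=u_{i,k}^{-1}$ then gives the claim.

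For the first piece, with $M:=\sup|g_K'|<\infty$ one has $|\psi(s)|\le Ms^{p-1}$, hence $\bigl|\int_{0}^{u_{1,k}}\psi(s)\,ds\bigr|\le M\int_{0}^{1/(k+1)}s^{p-1}\,ds=(M/p)(k+1)^{-p}$, using $p>0$. For the second piece, a direct computation gives $\psi'(s)=(p-1)s^{p-2}g_K'(s^p)+p\,s^{2p-2}g_K''(s^p)$ a.e., so with $M':=|p-1|\sup|g_K'|+p\sup|g_K''|$ and using $s^{2p-2}\le s^{p-2}$ for $0<s\le1$ we obtain $|\psi'(s)|\le M's^{p-2}$. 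Since $\psi$ is locally Lipschitz, hence absolutely continuous, on each $[u_{j-1,k},u_{j,k}]$ with $j\ge2$,
\[
|\psi(u_{j,k})-\psi(s)|\le\int_{s}^{u_{j,k}}|\psi'(t)|\,dt\le M'\int_{u_{j-1,k}}^{u_{j,k}}t^{p-2}\,dt,
\]
so the $j$-th integral is at most $(k+1)^{-1}M'\int_{u_{j-1,k}}^{u_{j,k}}t^{p-2}\,dt$. Summing over $j=2,\dots,i$ and using $\int_{u_{1,k}}^{u_{i,k}}t^{p-2}\,dt=\frac{u_{1,k}^{p-1}-u_{i,k}^{p-1}}{1-p}\le\frac{(k+1)^{1-p}}{1-p}$ (valid since $1-p>0$), the whole second piece is at most $\frac{M'}{1-p}(k+1)^{-p}$. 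Hence the bracket is bounded by $C(k+1)^{-p}$ with $C=C(K)=M/p+M'/(1-p)$, giving $|d_{i,k}(g_K)|\le C\,u_{i,k}^{-1}(k+1)^{-p}$.

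I do not anticipate a genuine obstacle; the one point that requires care is the location of the singularity. The derivative $\psi'$ blows up like $s^{p-2}$, which is \emph{not} integrable at $0$, so one must never integrate $\psi'$ across $0$: the strip $[0,u_{1,k}]$ is instead handled separately, using the integrable singularity $s^{p-1}$ of $\psi$ itself. One must also keep track of the powers of $k+1$ so that the factor $(k+1)^{1-p}$ produced by $\int t^{p-2}$ over $[u_{1,k},u_{i,k}]$ exactly cancels the mesh factor $(k+1)^{-1}$, leaving precisely $(k+1)^{-p}$.
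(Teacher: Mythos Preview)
Your proof is correct and follows the same overall skeleton as the paper's: you factor out $u_{i,k}^{-1}$, split the integral as $\int_{0}^{u_{1,k}}+\sum_{j=2}^{i}\int_{u_{j-1,k}}^{u_{j,k}}$, and treat the boundary strip $[0,u_{1,k}]$ with the crude bound $|\psi(s)|\le Ms^{p-1}$. The technical handling of the main sum, however, differs. The paper introduces the auxiliary function $h(s):=s\,g_K'(s^{p/(p-1)})$ and applies the mean value theorem to write $\psi(u_{j,k})-\psi(s)=(u_{j,k}^{p-1}-s^{p-1})\,h'(s_j^{p-1})$; bounding $h'$ then reduces the problem to controlling $\widetilde\Delta_{i,k}=\sum_{j=2}^{i}\int_{u_{j-1,k}}^{u_{j,k}}|u_{j,k}^{p-1}-s^{p-1}|\,ds$, which the authors relate back to the special case $\Delta_{i,k}=\frac{1}{k+1}\sum_{j=2}^{i}u_{j,k}^{p-1}-p^{-1}u_{i,k}^{p}$ and then invoke Lemma~1 (assertion~(38)) of \cite{BWW2019} for the bound $|\Delta_{i,k}|\le C^{\ast}(k+1)^{-p}$. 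Your route avoids this detour entirely: you differentiate $\psi$ directly, obtain the pointwise estimate $|\psi'(s)|\le M's^{p-2}$, and integrate $\int_{u_{1,k}}^{u_{i,k}}t^{p-2}\,dt\le (1-p)^{-1}(k+1)^{1-p}$ to recover the factor $(k+1)^{-p}$ explicitly. The payoff of your approach is that it is fully self-contained, whereas the paper's version outsources the final numerical bound to an external reference; the paper's substitution, on the other hand, makes transparent why the kernel-dependent case reduces to the kernel-free one.
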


\begin{proof}
Let us write $d_{i,k}\left(  g_{K}\right)  =u_{i.k}^{-1}\Delta_{i,k}\left(
g_{K}\right)  ,$ where%
\[
\Delta_{i,k}\left(  g_{K}\right)  :=\frac{1}{k+1}\sum_{j=2}^{i}u_{j,k}%
^{p-1}g_{K}^{\prime}\left(  u_{j,k}^{p}\right)  -\int_{0}^{u_{i,k}}%
s^{p-1}g_{K}^{\prime}\left(  s^{p}\right)  ds.
\]
It is easy to check that%
\begin{equation}
\Delta_{i,k}\left(  g_{K}\right)  =\sum_{j=2}^{i}\int_{u_{j-1,k}}^{u_{j,k}%
}\left(  u_{j,k}^{p-1}g_{K}^{\prime}\left(  u_{j,k}^{p}\right)  -s^{p-1}%
g_{K}^{\prime}\left(  s^{p}\right)  \right)  ds-\int_{0}^{u_{1,k}}s^{p-1}%
g_{K}^{\prime}\left(  s^{p}\right)  ds. \label{delta}%
\end{equation}
For convenience, we set $h\left(  s\right)  :=sg_{K}^{\prime}\left(
s^{\frac{p}{p-1}}\right)  ,$ $0<s<1,$ and applying the mean value theorem
yields $h\left(  u_{j,k}^{p-1}\right)  -h\left(  s^{p-1}\right)  =\left(
u_{j,k}^{p-1}-s^{p-1}\right)  h^{\prime}\left(  s_{j}^{p-1}\right)  ,$ where
$s<s_{j}<u_{j,k},$ with $h^{\prime}\left(  s\right)  =g_{K}^{\prime}\left(
s^{\frac{p}{p-1}}\right)  +\frac{p}{p-1}s^{\frac{p}{p-1}}g_{K}^{\prime\prime
}\left(  s^{\frac{p}{p-1}}\right)  .$ On the other terms, we have
\begin{equation}
u_{j,k}^{p-1}g_{K}^{\prime}\left(  u_{j,k}^{p}\right)  -s^{p-1}g_{K}^{\prime
}\left(  s^{p}\right)  =\left(  u_{j,k}^{p-1}-s^{p-1}\right)  h^{\prime
}\left(  s_{j}^{p-1}\right)  , \label{mvt}%
\end{equation}
where $h^{\prime}\left(  s_{j}^{p-1}\right)  =g_{K}^{\prime}\left(  s_{j}%
^{p}\right)  +\frac{p}{p-1}s_{j}^{p}g_{K}^{\prime\prime}\left(  s_{j}%
^{p}\right)  .$ From assumption $\left[  A4\right]  ,$ both $g_{K}^{\prime}$
and $g_{K}^{\prime\prime}$ are bounded, this implies that there exist two
positive constants $M_{1}$ and $M_{2},$ such that $g_{K}^{\prime}\left(
u^{p}\right)  <M_{1}$ and $\left\vert h^{\prime}\left(  u^{p-1}\right)
\right\vert <M_{2},$ for all $0<u<1.$ Thus, combining $\left(  \ref{delta}%
\right)  $ and $\left(  \ref{mvt}\right)  ,$ yields%
\[
\left\vert \Delta_{i,k}\left(  g_{K}\right)  \right\vert \leq M\left\{
\widetilde{\Delta}_{i,k}+p^{-1}u_{1,k}^{p}\right\}  ,
\]
where $\widetilde{\Delta}_{i,k}:=\sum_{j=2}^{i}\int_{u_{j-1,k}}^{u_{j,k}%
}\left\vert u_{j,k}^{p-1}-s^{p-1}\right\vert ds$ and $M:=\max\left(
M_{1},M_{2}\right)  $ that depends on $K.$ Observe now that $\widetilde
{\Delta}_{i,k}=-\sum_{j=2}^{i}\int_{u_{j-1,k}}^{u_{j,k}}\left(  u_{j,k}%
^{p-1}-s^{p-1}\right)  ds=-\Delta_{i,k}+p^{-1}u_{1,k}^{p},$ where
\[
\Delta_{i,k}:=\frac{1}{k+1}\sum_{j=2}^{i}u_{j,k}^{p-1}-\frac{u_{i,k}^{p}}{p}.
\]
From assertion $(38)$ of Lemma 1 in \cite{BWW2019}, we infer that there exists
a positive constants $C^{\ast}$ such that $\Delta_{i,k}\leq C^{\ast}\left(
k+1\right)  ^{-p},$ for all $2\leq i\leq k,$ therefore%
\[
\left\vert \Delta_{i,k}\left(  g_{K}\right)  \right\vert \leq M\left\{
\frac{C^{\ast}}{\left(  k+1\right)  ^{p}}+\frac{2p^{-1}}{\left(  k+1\right)
^{p}}\right\}  =\frac{C}{\left(  k+1\right)  ^{p}},
\]
where $C:=M\left(  C^{\ast}+2p^{-1}\right)  ,$ thus $\left\vert d_{i,k}\left(
g_{K}\right)  \right\vert \leq Cu_{i,k}^{-1}\left(  k+1\right)  ^{-p}$ as sought.
\end{proof}

\begin{lemma}
\textbf{\label{lemma2}}We have%
\[
\pi_{k}:=\frac{1}{k+1}\sum_{i=2}^{k}u_{i,k}^{\beta_{\ast}\gamma}\left(
\frac{1}{i}\sum_{j=2}^{i}u_{j,k}^{p-1}g_{K}^{\prime}\left(  u_{j,k}%
^{p}\right)  \right)  E_{i}\overset{\mathbf{P}}{\rightarrow}\frac{1}{p^{2}%
}\int_{0}^{1}t^{\gamma_{1}\beta_{\ast}}K\left(  t\right)  dt.
\]

\end{lemma}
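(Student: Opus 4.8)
The plan is to view $\pi_{k}$ as a weighted sum of the iid standard exponentials $E_{i}$ whose random weights are, up to an error controlled by Lemma~\ref{lemma1}, a Riemann-type discretization of a fixed function, and then to conclude by a weak law of large numbers (Chebyshev's inequality). First I would dispose of the inner sum. Since $g_{K}$ is absolutely continuous with $g_{K}(0)=0$, the change of variable $t=s^{p}$ gives $\int_{0}^{u_{i,k}}s^{p-1}g_{K}^{\prime}\left(  s^{p}\right)  ds=p^{-1}g_{K}\left(  u_{i,k}^{p}\right)  =p^{-1}u_{i,k}^{p}K\left(  u_{i,k}^{p}\right)$, so by the very definition of $d_{i,k}\left(  g_{K}\right)$ used in the proof of Theorem~\ref{Theorem1} we have $i^{-1}\sum_{j=2}^{i}u_{j,k}^{p-1}g_{K}^{\prime}\left(  u_{j,k}^{p}\right)  =p^{-1}u_{i,k}^{p-1}K\left(  u_{i,k}^{p}\right)  +d_{i,k}\left(  g_{K}\right)$. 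Inserting this into $\pi_{k}$ yields the decomposition $\pi_{k}=M_{k}+r_{k}$, where
\[
M_{k}:=\frac{1}{p\left(  k+1\right)  }\sum_{i=2}^{k}u_{i,k}^{\beta_{\ast}\gamma+p-1}K\left(  u_{i,k}^{p}\right)  E_{i},\qquad r_{k}:=\frac{1}{k+1}\sum_{i=2}^{k}u_{i,k}^{\beta_{\ast}\gamma}d_{i,k}\left(  g_{K}\right)  E_{i}.
\]

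Next I would show $r_{k}=o_{\mathbf{P}}\left(  1\right)$. By Lemma~\ref{lemma1}, $\left\vert r_{k}\right\vert \leq C\left(  k+1\right)  ^{-p}\{(k+1)^{-1}\sum_{i=2}^{k}u_{i,k}^{\beta_{\ast}\gamma-1}E_{i}\}$, and the bracketed quantity has mean $\left(  k+1\right)  ^{-\beta_{\ast}\gamma}\sum_{i=2}^{k}i^{\beta_{\ast}\gamma-1}\rightarrow 1/\left(  \beta_{\ast}\gamma\right)$ (as $\beta_{\ast}\gamma>0$), hence is $O_{\mathbf{P}}\left(  1\right)$ by Markov's inequality, so $r_{k}=O_{\mathbf{P}}\left(  \left(  k+1\right)  ^{-p}\right)  =o_{\mathbf{P}}\left(  1\right)$. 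It then remains to prove $M_{k}\overset{\mathbf{P}}{\rightarrow}p^{-2}\int_{0}^{1}t^{\gamma_{1}\beta_{\ast}}K\left(  t\right)  dt$, which I would do via Chebyshev. For the mean, $\mathbf{E}\left[  M_{k}\right]  =\left(  p\left(  k+1\right)  \right)  ^{-1}\sum_{i=2}^{k}\psi\left(  u_{i,k}\right)$ with $\psi\left(  s\right)  :=s^{\beta_{\ast}\gamma+p-1}K\left(  s^{p}\right)$, a nonnegative integrand dominated by a constant times $s^{\beta_{\ast}\gamma+p-1}$ with $\beta_{\ast}\gamma+p>0$; a Riemann-sum argument (detailed below) gives $\mathbf{E}\left[  M_{k}\right]  \rightarrow p^{-1}\int_{0}^{1}s^{\beta_{\ast}\gamma+p-1}K\left(  s^{p}\right)  ds$, and the substitution $t=s^{p}$ together with the identity $\beta_{\ast}\gamma/p=\beta_{\ast}\gamma_{1}$ (since $p=\gamma/\gamma_{1}$) turns this into $p^{-2}\int_{0}^{1}t^{\gamma_{1}\beta_{\ast}}K\left(  t\right)  dt$, the announced limit. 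For the variance, independence of the $E_{i}$ gives $\mathrm{Var}\left(  M_{k}\right)  =\left(  p^{2}\left(  k+1\right)  ^{2}\right)  ^{-1}\sum_{i=2}^{k}\psi\left(  u_{i,k}\right)  ^{2}\leq C\left(  k+1\right)  ^{-2\left(  \beta_{\ast}\gamma+p\right)  }\sum_{i=2}^{k}i^{2\left(  \beta_{\ast}\gamma+p\right)  -2}$, using that $K$ is bounded by $\left[  A4\right]$; since $\beta_{\ast}\gamma+p>0$, this is $O\left(  k^{-1}\right)$ when $2\left(  \beta_{\ast}\gamma+p\right)  >1$ and tends to $0$ in the remaining cases as well. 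Combining $\mathbf{E}\left[  M_{k}\right]  \rightarrow p^{-2}\int_{0}^{1}t^{\gamma_{1}\beta_{\ast}}K\left(  t\right)  dt$ with $\mathrm{Var}\left(  M_{k}\right)  \rightarrow 0$ gives $M_{k}\overset{\mathbf{P}}{\rightarrow}p^{-2}\int_{0}^{1}t^{\gamma_{1}\beta_{\ast}}K\left(  t\right)  dt$, and hence $\pi_{k}=M_{k}+r_{k}$ converges to the same limit, as claimed.

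The main obstacle is the Riemann-sum step: the integrand $\psi$ may be unbounded at the origin (when $\beta_{\ast}\gamma+p<1$) and $K$ is only right-continuous by $\left[  A1\right]$, so one cannot invoke uniform continuity. The remedy, which is the argument already used for the analogous Riemann sums in the proof of Theorem~\ref{Theorem1}, is to truncate at a small level $\delta>0$: the contribution of the indices with $u_{i,k}<\delta$ is bounded by a constant times $\delta^{\beta_{\ast}\gamma+p}$ uniformly in $k$ (and likewise for $\int_{0}^{\delta}\psi$), while on $\left[  \delta,1\right]$ the function $\psi$ is bounded and Riemann integrable (being the product of a continuous function and the monotone, hence Riemann-integrable, function $K(s^{p})$), so the truncated Riemann sums converge to $\int_{\delta}^{1}\psi$; letting $\delta\downarrow 0$ finishes the argument. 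The remaining manipulations are routine.
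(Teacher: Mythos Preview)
Your proof is correct and follows essentially the same decomposition as the paper: your $r_{k}$ is exactly the paper's $\pi_{k,1}$ (handled via Lemma~\ref{lemma1}), and your $M_{k}$ is the paper's $\pi_{k,2}$ with the inner integral $\int_{0}^{u_{i,k}}s^{p-1}g_{K}^{\prime}(s^{p})\,ds=p^{-1}u_{i,k}^{p}K(u_{i,k}^{p})$ evaluated explicitly. The only difference is that for the main term the paper invokes Lyapunov's CLT for triangular arrays (overkill for a convergence-in-probability conclusion), whereas you argue more directly via Chebyshev and are more careful about the Riemann-sum step near $0$; both routes amount to checking that the deterministic means converge and the variances vanish.
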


\begin{proof}
Observe that $\pi_{k}$ may be decomposed into the sum of
\[
\pi_{k,1}:=\frac{1}{k+1}\sum_{i=2}^{k}d_{i,k}\left(  g_{K}\right)
u_{i,k}^{\beta_{\ast}\gamma}E_{i}%
\]
and%
\[
\pi_{k,2}:=\frac{1}{k+1}\sum_{i=2}^{k}\left\{  u_{i,k}^{\beta_{\ast}\gamma
-1}\int_{0}^{u_{i,k}}s^{p-1}g_{K}^{\prime}\left(  s^{p}\right)  ds\right\}
E_{i}.
\]
Using Lemma $\ref{lemma1},$ we get $\mathbf{E}\left\vert \pi_{k,1}\right\vert
\leq C\left(  k+1\right)  ^{-p}\frac{1}{k+1}\sum_{i=1}^{k}u_{i,k}^{\beta
_{\ast}\gamma-1},$ then from the Riemann sum, we have $\frac{1}{k+1}\sum
_{i=1}^{k}u_{i,k}^{\beta_{\ast}\gamma-1}$ tends to $\left(  \beta_{\ast}%
\gamma\right)  ^{-1},$ and since $k^{-p}\rightarrow0,$ then $\pi_{k,1}%
\overset{\mathbf{P}}{\rightarrow}0,$ as $n\rightarrow\infty.$ On the other
hand, by means of Lyapunov's central limit theorem for triangular arrays, we
readily show that $\pi_{k,2}\overset{\mathbf{P}}{\rightarrow}\int_{0}%
^{1}t^{\beta_{\ast}\gamma-1}\left(  \int_{0}^{t}s^{p-1}g_{K}^{\prime}\left(
s^{p}\right)  ds\right)  dt,$ as $n\rightarrow\infty.$ It is easy to verify,
making some change of variables, that this last equals $p^{-2}\int_{0}%
^{1}t^{\gamma_{1}\beta_{\ast}}K\left(  t\right)  dt,$ which completes the proof.
\end{proof}

\end{document}